\DeclareMathOperator{\N}{\mathbb{N}}
\numberwithin{equation}{section}
	\newtheorem{satz}{Satz}[section]
	\newtheorem{thm}[satz]{Theorem}
	\newtheorem{lemma}[satz]{Lemma}
	\newtheorem{cor}[satz]{Corollary}
	\theoremstyle{definition} 
	\newtheorem{defin}[satz]{Definition}
	\newtheorem{rem}[satz]{Remark}
\begin{document}

\parindent 0cm

\address{Fachbereich Mathematik, Technische Universit\"at Darmstadt, Schlossgartenstrasse 7, D--64289 Darmstadt, Germany}

\author{David Klein and Jennifer Kupka}
\email{klein@mathematik.tu-darmstadt.de}
\email{jenny.kupka@t-online.de}

\title[Completions and algebraic formulas for mock theta functions]{Completions and algebraic formulas for the coefficients of Ramanujan's mock theta functions}

\date{\today}

\maketitle

\begin{abstract}
We present completions of mock theta functions to harmonic weak Maass forms of weight $\nicefrac{1}{2}$ and algebraic formulas for the coefficients of mock theta functions. We give several harmonic weak Maass forms of weight $\nicefrac{1}{2}$ that have mock theta functions as their holomorphic part. Using these harmonic weak Maass forms and the Millson theta lift we compute finite algebraic formulas for the coefficients of the appearing mock theta functions in terms of traces of singular moduli.
\end{abstract}

\mbox{}

\section{Introduction} \label{section introduction}
Mock theta functions first appeared in Ramanujan's last letter to his friend Hardy in 1920. In this letter he told Hardy that he had discovered a new class of functions which he called mock theta functions. Ramanujan did not give any definition of what a mock theta function should be, but listed 17 examples, divided into four groups of orders $3$, $5$, $7$ and $10$, respectively, given as $q$-hypergeometric series, and stated various identities between them and some analytical properties. For example, the four mock theta functions of order $3$ that Ramanujan defined in his letter are
\begin{flalign*}
f(q) &:= \sum_{n=0}^{\infty} \frac{q^{n^{2}}}{\left(-q;q \right)_{n}^{2}}, \ \ \ \ \ \ \ \ \ \ \
\phi(q) := \sum_{n=0}^{\infty} \frac{q^{n^{2}}}{\left(-q^{2};q^{2} \right)_{n}}, \\
\psi(q) &:= \sum_{n=1}^{\infty} \frac{q^{n^{2}}}{\left(q;q^{2} \right)_{n}}, \ \ \ \ \ \ \ \ \ \ \ \
\chi(q) := \sum_{n=0}^{\infty} \frac{q^{n^{2}} \left(-q;q \right)_{n}}{\left(-q^{3};q^{3} \right)_{n}}, 
\end{flalign*}

where we have used the standard notation
$$\left(a;q^{k} \right)_{n} := \prod_{m=0}^{n-1} \left(1-a q^{mk} \right).$$

Since then many mathematicians (especially Watson in his work \cite{watson1936final}) have dealt with Ramanujan's 17 functions, and have proven many of the identities he had given. A number of 16 further mock theta functions were later found in Ramanujan's Lost Notebook (see, e.g., \cite{ramanujan1988lost} and \cite{andrews2005ramanujan}), including seven functions of order $6$. Other mathematicians have also discovered more mock theta functions that had not been considered before: In \cite{gordon2000some} Gordon and McIntosh found functions of order $8$ while McIntosh also studied mock theta functions of order $2$ in \cite{mcintosh2007second}. \\
Articles that offer a good first overview on this topic are, for example, \cite{zagier2009ramanujan} and \cite{duke2014almost}. A more detailed survey over all mock theta functions of the different orders, including their definitions, relations and transformation formulas is provided in \cite{gordon2012survey}. In this paper we will use the standard definitions of the mock theta functions as given in \cite{gordon2012survey}. \\
One major breakthrough in a deeper understanding of mock theta functions came in 2002 when Sander Zwegers found a connection between mock theta functions and harmonic weak Maass forms of weight $\nicefrac{1}{2}$. He proved that a mock theta function could be completed to a harmonic weak Maass form of weight $\nicefrac{1}{2}$ by multiplying it by a suitable power of $q$ and subsequently adding a certain non-holomorphic function to it. Zwegers considered these completions for the fifth and seventh order mock theta functions in his PhD thesis \cite{zwegers2008mock}, and for two of the third order mock theta functions in \cite{zwegers2001mock}. Moore followed the work of Zwegers and found transformation laws for mock theta functions of order $10$ and their relation to harmonic weak Maass forms in \cite{moore2012modular}. Though Ramanujan had not explained what the order of a mock theta function should be, it turned out that the order is related to the level of the corresponding Maass form. \\
We will present such completions to a harmonic weak Mass form of weight $\nicefrac{1}{2}$ for $22$ different mock theta functions of orders $2$, $3$, $6$ and $8$. For example, we will show for the sixth order mock theta function
$$\sigma(q) := \sum_{n=0}^{\infty} \frac{q^{\frac{1}{2} (n+1) (n+2)} \left(-q;q \right)_{n}}{\left(q;q^{2} \right)_{n+1}}$$

that the function $q^{-\frac{1}{12}} \ \sigma(q)$ is the holomorphic part of a harmonic weak Maass form of weight $\nicefrac{1}{2}$ for the subgroup
$$\{(\gamma,\phi) \in \text{Mp}_{2}(\mathbb{Z}) \ | \ \gamma \in \Gamma(6) \}$$

of the metaplectic group $\text{Mp}_{2}(\mathbb{Z})$, where $\Gamma(6)$ is the principal congruence subgroup of level $6$. \\
A further example of what we will prove is that, if
$$F(\tau) = \left(\begin{matrix}
  f_{0}(\tau) \\
  f_{1}(\tau)  \\
  f_{2}(\tau)  \\
  f_{3}(\tau)  \\
  f_{4}(\tau)  \\
  f_{5}(\tau) 
\end{matrix}\right) := \left(\begin{matrix}
  \sqrt{8} \ q^{-\frac{1}{12}} \ \sigma(q) \\
  2 \ q^{\frac{1}{4}} \ \rho(q) \\
  q^{-\frac{1}{48}} \ \phi(q^{\frac{1}{2}}) \\
  q^{-\frac{1}{48}} \ \phi(-q^{\frac{1}{2}}) \\
  \sqrt{2} \ q^{-\frac{3}{16}} \ \psi(q^{\frac{1}{2}}) \\
  \sqrt{2} \ q^{-\frac{3}{16}} \ \psi(-q^{\frac{1}{2}}) 
\end{matrix}\right)$$

with $q:=e^{2 \pi i \tau}$, $\tau \in \mathbb{H}$, and the mock theta functions $\sigma$, $\rho$, $\phi$ and $\psi$ of order $6$, then the function
\begin{flalign*}
\widetilde{F}(\tau) := &\sqrt{2} \ f_{0}(\tau) \ [-(\mathfrak{e}_{2}-\mathfrak{e}_{22})-(\mathfrak{e}_{10}-\mathfrak{e}_{14})] + 2 \ f_{1}(\tau) \ [-(\mathfrak{e}_{6}-\mathfrak{e}_{18})] \\
&+ (f_{2}(\tau)+f_{3}(\tau)) \ [(\mathfrak{e}_{1}-\mathfrak{e}_{23})-(\mathfrak{e}_{7}-\mathfrak{e}_{17})] + (f_{2}(\tau)-f_{3}(\tau)) \ [(\mathfrak{e}_{5}-\mathfrak{e}_{19})-(\mathfrak{e}_{11}-\mathfrak{e}_{13})] \\
&+ \sqrt{2} \ (f_{4}(\tau)+f_{5}(\tau)) \ (\mathfrak{e}_{3}-\mathfrak{e}_{21}) + \sqrt{2} \ (f_{4}(\tau)-f_{5}(\tau)) \ [-(\mathfrak{e}_{9}-\mathfrak{e}_{15})],
\end{flalign*}
where $\mathfrak{e}_r$ are the standard basis vectors of the group algebra $\mathbb{C}[\mathbb{Z}/24\mathbb{Z}]$, is the holomorphic part of a harmonic weak Maass form of weight $\nicefrac{1}{2}$ for the dual Weil representation. This result opens up the possibility to use the powerful tool of theta lifts between spaces of modular forms.\\
The Millson theta lift, which maps weight 0 to weight $\nicefrac{1}{2}$ harmonic weak Maass forms, uses the Millson theta function as an integration kernel and was studied in great detail by Alfes in her thesis \cite{alfesthesis} and by Alfes-Neumann and Schwagenscheidt in \cite{alfes2018theta}. In particular, Alfes-Neumann found formulas for the coefficients of the holomorphic part of the Millson theta lift in terms of traces of singular moduli. By writing the harmonic weak Maass form of weight $\nicefrac{1}{2}$ containing the mock theta functions as the Millson theta lift of a suitable weakly holomorphic modular form, we can derive finite algebraic formulas for the coefficients of the considered mock theta functions in terms of traces of singular moduli. Continuing our example from above, we will prove that the coefficients $a_\sigma(n)$ of the mock theta function $\sigma$ of order 6 are given by
$$a_\sigma(n)=-\frac{i}{4\sqrt{48n-4}}\big(\textup{tr}^+_{e_{(6),1}}(4-48n,2)-\textup{tr}^-_{e_{(6),1}}(4-48n,2)\big), $$
where the trace functions $\textup{tr}^+_{e_{(6),1}}$ and $\textup{tr}^-_{e_{(6),1}}$ are given as in (\ref{definition trace functions}), and $e_{(6),1}\in M^!_0(12)$ is defined as
$$e_{(6),1}(z):=\bigg(\frac{\eta(z)\eta(3z)}{\eta(4z)\eta(12z)}\bigg)^2-16\bigg(\frac{\eta(4z)\eta(12z)}{\eta(z)\eta(3z)}\bigg)^2$$
with $\eta(\tau) = q^{\frac{1}{24}} \ \prod_{n=1}^{\infty} (1-q^{n})$ denoting the Dedekind eta function. Similar formulas for the order 3 mock theta functions $f$ and $\omega$ (see, e.g., \cite{bruinier2017algebraic} for its definition) have already been proven by Bruinier and Schwagenscheidt in \cite{bruinier2017algebraic}. \\
This paper is organized as follows. We will start with the necessary definitions, notations and results in section 2, followed by the results on the completions and formulas for the coefficients of the mock theta functions in section 3. We will consider mock theta functions of different order separately and subsection 3.1 about those of order $6$ will be worked out in detail. As the ideas and strategies for the other orders are very similar to the case of order $6$, the subsections about the other orders only contain the known results and no proofs. \\
Most of the results presented in this paper first appeared in our Master's theses \cite{kupka2017mock} and \cite{klein2018mock} where they also have been proven in more detail. 

\mbox{}

\textbf{Acknowledgement} \\
Both our theses were supervised by Jan Hendrik Bruinier and Markus Schwagenscheidt. We thank them for their support and many helpful discussions during the writing of our theses, and also for their comments on this paper. Further, we thank Kathrin Bringmann and Anna-Maria von Pippich for their helpful remarks.

\mbox{} 

\section{Preliminaries}\label{section preliminaries}
\subsection{Lattices, the Weil Representation and theta functions} \label{subsection lattices and the weil representation}

Let $N>0$ be an integer. We consider the lattice $L=\mathbb{Z}$ with the quadratic form $n \mapsto N n^{2}$. The discriminant group $\mathcal{D}:=L'/L$ can then be identified with $\mathbb{Z}/2N\mathbb{Z}$ together with the $\mathbb{Q}/\mathbb{Z}$-valued quadratic form $r\mapsto \frac{r^2}{4N} \ (\text{mod} \ \mathbb{Z})$. The associated bilinear form on $\mathcal{D}$ is $(r,r')=\frac{rr'}{2N} \ (\text{mod} \ \mathbb{Z})$. \\
For $r\in L'/L$ we define $\mathfrak{e}_r$ to be the standard basis vectors of the
group algebra $\mathbb{C}[L'/L]$ equipped with the standard inner product
$\langle\cdot,\cdot\rangle$ satisfying
$\langle\mathfrak{e}_r,\mathfrak{e}_{r'}\rangle=\delta_{r,r'}$. The associated \emph{Weil representation} $\rho_L$ is defined on the generators $T=\big((\begin{smallmatrix}1&1\\0&1
\end{smallmatrix}),1 \big)$ and $S=\big((\begin{smallmatrix}0&-1\\1&0
\end{smallmatrix}),\sqrt{\tau} \big)$ of the metaplectic group
$\text{Mp}_{2}(\mathbb{Z})$ by 
\begin{align}\label{WeilDarstellung}
\rho_L(T)\mathfrak{e}_r=e\big(Q(r)\big)\mathfrak{e}_r\ \ \text{ and }\ \
\rho_L(S)\mathfrak{e}_r=\frac{e(-1/8)}{\sqrt{2N}}\sum_{r'(2N)}e\big(-(r,r')\big)\mathfrak{e}_{r'},
\end{align}

where $e(z)=e^{2\pi iz}$ for $z\in\mathbb{C}$ and $\sqrt{z}=z^{\frac{1}{2}}$ always denotes the principal branch of the square root. The dual Weil representation corresponds to the lattice $L$ with quadratic form $-Q$ and will be denoted by $\overline{\rho}_L$. \\
Let $N$ be as above and $a \in \mathbb{Z}$. For $\tau \in \mathbb{H}$ we define the \emph{unary theta function $\theta_{N}$ of level $N$}  as
$$\theta_{N}(\tau) := \sum_{a \ (2N)} \theta_{N,a} (\tau) \ \mathfrak{e}_{a}, \ \
\text{ where } \ \
\theta_{N,a}(\tau) := \sum_{n \equiv a \ (2N)} n \ q^{\frac{n^{2}}{4N}} = \sum_{n \equiv a \ (2N)} n \ e^{2 \pi i \tau \frac{n^{2}}{4N}}.$$

The definition of $\theta_{N,a}$ depends only on $a \ (2N)$. If we consider the lattice above as well as its associated Weil representation, then the vector valued theta function $\theta_{N}$ is a holomorphic vector valued modular form of weight $\nicefrac{3}{2}$ for this Weil representation. Thus, the function $\theta_{N,a}$ is holomorphic on $\mathbb{H}$ and has the modular transformation properties
\begin{equation} \label{transformation of theta_N,a under T}
\theta_{N,a}(\tau+1) = e \left(\frac{a^{2}}{4N} \right) \ \theta_{N,a}(\tau)
\end{equation}

and
\begin{equation} \label{transformation of theta_N,a under S}
\theta_{N,a} \left(-\frac{1}{\tau} \right) = \tau^{\frac{3}{2}} \ \frac{e \left(-\frac{1}{8} \right)}{\sqrt{2N}} \ \sum_{k \ (2N)} e \left(-\frac{ak}{2N} \right) \ \theta_{N,k}(\tau).
\end{equation}

Let $Q$ be an exact divisor of $N$, i.e. $Q\in\mathbb{Z}_{>0}$ with $Q\vert N$ and
$\gcd(N/Q,Q)=1$. The \emph{Atkin-Lehner involution} associated to $Q$ is then defined by
any matrix $$W_Q^N=\begin{pmatrix}
Q\alpha&\beta\\N\gamma&Q\delta
\end{pmatrix}, $$ where $\alpha,\beta,\gamma,\delta\in\mathbb{Z}$ with
$\det(W_Q^N)=Q$. The map $$W_Q^N:\ M_k(N)\rightarrow M_k(N),\ \ f\mapsto f|_kW_Q^N $$
does not depend on the choice of $\alpha,\beta,\gamma$ and $\delta$ and defines an
involution. For two exact divisors $Q,Q'$ of $N$ we define the product $$Q\ast
Q':=\frac{Q\cdot Q'}{\gcd(Q,Q')^2},$$ which is compatible with the action of the
Petersson slash operator, i.e. we have $$f|_kW_{Q\ast
Q'}^N=f|_kW_Q^N|_kW_{Q'}^N.$$ 
The automorphism group $\text{Aut}(\mathbb{Z}/2N\mathbb{Z})$ acts on vector valued
modular forms $f=\sum_{r\in\mathbb{Z}/2N\mathbb{Z}}f_r \ \mathfrak{e}_r$ for $\rho_L$ or
$\overline{\rho}_L$ by $$f^\sigma=\sum_{r}f_r \ \mathfrak{e}_{\sigma(r)}.$$ These
automorphisms are all involutions, which are also called Atkin-Lehner involutions
and correspond to exact divisors $Q$ of $N$. The automorphism $\sigma_Q$
corresponding to $Q$ is defined by the two equations \begin{align*}
\sigma_Q(r)\equiv-r \ (2Q)\ \text{and}\ \sigma_Q(r)\equiv r \ (2N/Q)
\end{align*} for an element $r\in\mathbb{Z}/2N\mathbb{Z}$.

\subsection{Harmonic Maass Forms and the $\xi$-Operator} \label{subsection harmonic maass forms and the xi-operator}

Vector valued harmonic weak Maass forms were first introduced by Bruinier and Funke in \cite{bruinier2004two}. We will consider a more general setting than they have in their article. \\
Let $V$ be a vector space over $\mathbb{C}$ of finite dimension $d$ and let $k \in \frac{1}{2} \mathbb{Z}$ with $k \neq 1$. For $\tau \in \mathbb{H}$ we put $u:=\text{Re}(\tau)$ and $v:=\text{Im}(\tau)$, so that $\tau=u+iv$. Moreover, recall the weight $k$ hyperbolic Laplace operator, given by
$$\Delta_{k} = -v^{2} \ \left(\frac{\partial^{2}}{\partial u^{2}} + \frac{\partial^{2}}{\partial v^{2}} \right) + ikv \ \left(\frac{\partial}{\partial u} + i \ \frac{\partial}{\partial v} \right).$$

Let $\rho: \text{Mp}_{2}(\mathbb{Z}) \rightarrow \text{GL}(V)$ be a unitary representation of $\text{Mp}_{2}(\mathbb{Z})$ that satisfies $\rho(T)^{N}=\text{Id}$ for some $N \in \mathbb{N}$, let $f: \mathbb{H} \rightarrow V$ be a twice continuously differentiable function and $\Gamma \subseteq \text{Mp}_{2}(\mathbb{Z})$ a subgroup of finite index. We call $f$ a \emph{harmonic weak Maass form of weight} $k$ with respect to the representation $\rho$ and the group $\Gamma$ if
\begin{enumerate}
    \item $f(\gamma \tau)=\phi(\tau)^{2k} \ \rho(\gamma,\phi) \ f(\tau)$ for all $(\gamma,\phi) \in \Gamma$,
    \item there is a constant $C>0$ such that for any cusp $s \in \mathbb{Q} \cup \{\infty\}$ of $\Gamma$ and $(\delta,\phi) \in \text{Mp}_{2}(\mathbb{Z}$) with $\delta \infty=s$ the function $f_{s}(\tau):=\phi(\tau)^{-2k} \ \rho^{-1}(\delta,\phi) \ f(\delta \tau)$ satisfies $f_{s}(\tau)=O(e^{Cv})$ as $v \rightarrow \infty$ (uniformly in $u$),
    \item $\Delta_{k} f=0$.
\end{enumerate}

Condition ii) says that $f$ increases at most linear exponentially at all cusps of $\Gamma$.

The space of these forms is denoted by $H_{k,\rho}(\Gamma)$. If we have $\Gamma=\text{Mp}_{2}(\mathbb{Z})$, we write as an abbreviation $H_{k,\rho}(\text{Mp}_{2}(\mathbb{Z}))=:H_{k,\rho}$. Further, let $M^{!}_{k,\rho}$ be its subspace of weakly holomorphic modular forms, consisting of those forms in $H_{k,\rho}$ that are holomorphic on $\mathbb{H}$. \\
A harmonic weak Maass form $f \in H_{k,\rho}$ has a unique decomposition $f = f^{+}+f^{-}$, where $f^{+}$ is the \emph{holomorphic part} and $f^{-}$ is the \emph{non-holomorphic part of} $f$. 
If we write the Fourier expansion of the holomorphic part of $f \in H_{k,\rho}$ as 
$$f^{+}(\tau) = \sum_{n \in \mathbb{Z}} a^{+}(n) \ e \left(\frac{n \tau}{N} \right),$$

where $a^{+}(n)$ are vector valued coefficients, then the Fourier polynomial
$$P(f)(\tau) = \sum_{n \in \mathbb{Z}, n \leq 0} a^{+}(n) \ e \left(\frac{n\tau}{N} \right)$$

is called the \emph{principal part of} $f$. \\
For $f \in H_{k,\rho}$ the differential operator $\xi_{k}$ is given by
$$\xi_{k}(f)(\tau) = 2i \ v^{k} \ \overline{\frac{\partial}{\partial \overline{\tau}} f(\tau)}.$$

The operator $\xi_{k}$ is antilinear and defines a surjective mapping $\xi_{k}: H_{k,\rho} \rightarrow M_{2-k,\overline{\rho}}^{!}$ with kernel given by $M_{k,\rho}^{!}$. We can use $\xi_{k}$ to define the subspace
$$H_{k,\rho}^{+} := \{f \in H_{k,\rho} \ | \ \xi_{k}(f) \in S_{2-k,\overline{\rho}} \},$$

so that $H_{k,\rho}^{+}$ consists of all harmonic weak Maass forms in $H_{k,\rho}$ that are mapped to cusp forms under $\xi_{k}$. The holomorphic part $f^{+}$ of $f \in H_{k,\rho}^{+}$ is sometimes also called a \emph{mock modular form}, and $\xi_{k}f$ is called the \emph{shadow} of $f$. \\
We will use the following lemma when we prove our formulas for the coefficients.
\begin{lemma}[\cite{bruinier2017algebraic}, Lemma 2.3]\label{XiSpitze} Let $G$ be a harmonic weak Maass form of weight $2-k\in\nicefrac{1}{2}+\mathbb{Z}$ for $\rho_L$ or $\overline{\rho}_L$ whose principal part vanishes and which maps to a cusp form under $\xi_{2-k}$ (or a holomorphic modular form if $k=\nicefrac{1}{2}$). Then $G$ is a cusp form.
\end{lemma}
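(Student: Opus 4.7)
The plan is to first show that the shadow $\xi_{2-k}(G)$ vanishes, and then to read off cuspidality of $G$ from the assumption on the principal part. The tool for the first step is the Bruinier-Funke pairing, which for a cusp form $g$ of weight $k$ for $\rho_L$ (or $\overline{\rho}_L$) with Fourier coefficients $b_g(n)$ at $\infty$ asserts
$$ \bigl(g,\,\xi_{2-k}(G)\bigr)_{\mathrm{Pet}} \;=\; \{g,G\} \;:=\; \sum_{n\le 0}\bigl\langle a^+_G(n),\,b_g(-n)\bigr\rangle, $$
a finite combinatorial pairing depending only on the Fourier coefficients that appear in the principal part of $G$ and on the Fourier expansion of $g$ at the unique cusp.

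I would apply this identity with $g = \xi_{2-k}(G)$ itself. The assumption that the principal part of $G$ vanishes means $a^+_G(n) = 0$ for every $n\le 0$, so the right-hand side is zero. When $k \neq \nicefrac{1}{2}$ the shadow is by hypothesis a cusp form, the Petersson inner product is a genuine norm, and one immediately concludes $\xi_{2-k}(G) = 0$.

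The main obstacle is the exceptional case $k = \nicefrac{1}{2}$, in which $\xi_{\nicefrac{3}{2}}(G)$ is only known to be a (possibly non-cuspidal) holomorphic modular form of weight $\nicefrac{1}{2}$ and the classical Petersson inner product need not converge. I would handle this either by invoking a regularisation of the pairing, or by arguing directly via Serre-Stark: every holomorphic modular form of weight $\nicefrac{1}{2}$ for $\rho_L$ is a linear combination of unary theta series, so pairing $\xi_{\nicefrac{3}{2}}(G)$ against each such theta series via the Bruinier-Funke formula again gives zero, and nondegeneracy of the pairing on the finite-dimensional space of weight $\nicefrac{1}{2}$ forms forces $\xi_{\nicefrac{3}{2}}(G) = 0$.

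With $\xi_{2-k}(G) = 0$ established, $G$ lies in the kernel of $\xi_{2-k}$ and is therefore weakly holomorphic. Since vector-valued forms for the Weil representation have only the cusp $\infty$, the Fourier expansion of $G$ at $\infty$ is its complete expansion; the vanishing of the principal part --- which by the paper's convention includes the $n=0$ constant term --- then forces the expansion to consist of positive-index terms only. Therefore $G$ decays at $\infty$, i.e.\ $G \in S_{2-k,\rho}$, as required.
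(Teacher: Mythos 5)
Your argument is correct and is essentially the proof of the cited source (the paper itself only quotes this as Lemma 2.3 of \cite{bruinier2017algebraic} without reproving it): one pairs $\xi_{2-k}(G)$ against $G$ via the Bruinier--Funke pairing, uses the vanishing principal part to kill the Petersson norm (regularized in the $k=\nicefrac{1}{2}$ case), and then reads off cuspidality from the weakly holomorphic expansion at the single cusp $\infty$. No substantive differences to report.
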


\subsection{The Millson Theta Lift and Traces of CM-Values}
For a discriminant $D<0$ and $r\in\mathbb{Z}$ with $D\equiv r^2 \ (4N)$ denote by
$\mathcal{Q}_{N,D,r}$ the set of integral binary quadratic forms
$Q(x,y)=ax^2+bxy+cy^2$ of discriminant $D=b^2-4ac$ and satisfying $N\vert a$ and
$b\equiv r \ (2N)$. This set splits into the sets of positive and negative definite
quadratic forms, which we denote by $\mathcal{Q}^+_{N,D,r}$ and
$\mathcal{Q}^-_{N,D,r}$, respectively. The group $\Gamma_0(N)$ acts on both of these
sets with finitely many orbits and the number $\omega_Q=\frac{1}{2}\vert
\Gamma_0(N)_Q\vert$ is finite. For each $Q\in\mathcal{Q}^+_{N,D,r}$ the equation
$Q(z_Q,1)=0$ is solved by the associated \emph{CM-point} $z_Q=(-b+i\sqrt{\vert
D\vert})/2a$. 
\\For a weakly holomorphic modular form $F\in M_0^!(N)$ of weight $0$ for $\Gamma_0(N)$ we define the two \emph{trace
functions}
\begin{align} \label{definition trace functions}
\text{tr}_F^+(D,r)=\sum_{Q\in\mathcal{Q}^+_{N,D,r}/\Gamma_0(N)}\frac{F(z_Q)}{\omega_Q}\ \
 \text{ and }\ \
\text{tr}_F^-(D,r)=\sum_{Q\in\mathcal{Q}^-_{N,D,r}/\Gamma_0(N)}\frac{F(z_Q)}{\omega_Q}.
\end{align}
The \emph{Millson theta lift} $\mathcal{I}^M(F,\tau)$ of a weakly holomorphic modular form 
$F\in M_0^!(N)$ is defined as an integral \begin{align*}
\mathcal{I}^M(F,\tau)=\frac{i}{\sqrt{N}}\int_{\Gamma_0(N)\setminus\mathbb{H}}F(z)\ \Theta_M(\tau,z)\ \frac{dxdy}{y^2},
\end{align*} where we write $z=x+iy$ and $\Theta_M(\tau,z)$ denotes the Millson theta
function. The theta function $\Theta_M(\tau,z)$ is $\Gamma_0(N)$-invariant in the
variable $z$ and transforms like a modular form of weight $\nicefrac{1}{2}$ for the dual Weil
representation $\overline{\rho}_L$ in the variable $\tau$. The assignment
$F\mapsto\mathcal{I}^M(F,\tau)$ then defines a map $\mathcal{I}^M:\
M_0^!(N)\rightarrow H_{1/2,\overline{\rho}_L}$. For more details see \cite{alfesthesis} or \cite{alfes2018theta}. As it turns out, the coefficients of
the holomorphic part of the Millson theta lift can be computed using the trace
functions which we defined above. 
\begin{thm}[\cite{alfesthesis}, Theorem 4.3.1]\label{Alfes}
Let $F\in H_0^+(N)$ be a harmonic weak Maass form of weight $0$ for $\Gamma_0(N)$,
$D<0$ a discriminant and $r\in L'/L$ with $D\equiv r^2 \ (4N)$. Then the coefficient
of index $(-D,r)$ of the holomorphic part of the Millson theta lift
$\mathcal{I}^M(\tau,F)$ is given by 
\begin{align*}
\frac{i}{\sqrt{-D}}\big(\textup{tr}_F^+(D,r)-\textup{tr}_F^-(D,r)\big).
\end{align*}
\end{thm}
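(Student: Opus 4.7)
The natural approach is to compute the Fourier expansion of the Millson theta kernel $\Theta_M(\tau,z)$ in the variable $\tau$ and then evaluate each Fourier coefficient of $\mathcal I^M(F,\tau)$ by Rankin--Selberg unfolding against $F$. Writing $\Theta_M(\tau,z) = \sum_{\mu \in L'/L}\sum_{m}\varphi^\mu_m(z,v)\,e(mu)\,\mathfrak e_\mu$ with $u+iv = \tau$, the coefficient of index $(m,\mu)$ of $\mathcal I^M(F,\tau)$ is $\frac{i}{\sqrt N}\int_{\Gamma_0(N)\setminus\mathbb H}F(z)\,\varphi^\mu_m(z,v)\,\frac{dxdy}{y^2}$, and its contribution to the holomorphic part as a function of $\tau$ is extracted from the $v$-independent piece of this integral.

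\textbf{Unfolding and CM evaluation.} I would first make the $(m,\mu)$-th Fourier coefficient at $m=-D/(4N)>0$ and class $r\in L'/L$ explicit: in signature $(1,2)$, the Millson Schwartz function factors as a Gaussian in the norm of $X\in L'$ times an explicit factor depending on the $z$-components of $X$, and summing over $X\in L'$ of norm $Q(X)=-D/(4N)$ in class $r\pmod L$ produces a sum indexed by $\mathcal Q^+_{N,D,r}\cup\mathcal Q^-_{N,D,r}$, the two sets corresponding to the two sheets of the negative-norm hyperboloid in $L\otimes\mathbb R$. Using $\sum_X=\sum_{\Gamma_0(N)\setminus\{X\}}\sum_{\Gamma_0(N)_X\setminus\Gamma_0(N)}$ the integral over the fundamental domain is rewritten as a sum of local integrals over $\Gamma_0(N)_Q\setminus\mathbb H$; since $D<0$, the line $X^\perp$ picks out a single CM point $z_Q\in\mathbb H$, the stabilizer has order $2\omega_Q$, and the local integral evaluates in closed form to $(i/\sqrt{-D})\,F(z_Q)/\omega_Q$ up to a sign determined by the sheet on which $X$ lies. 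Summing over orbits produces exactly the claimed $(i/\sqrt{-D})(\textup{tr}^+_F(D,r)-\textup{tr}^-_F(D,r))$.

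\textbf{Main obstacle.} The principal technical difficulty is the convergence and regularization of the theta integral: since $F\in H_0^+(N)$ may grow exponentially at the cusps while $\Theta_M(\tau,z)$ only decays polynomially in $y=\imag(z)$, the integral must be regularized in the sense of Borcherds, for instance by inserting $y^{-s}$ and analytically continuing to $s=0$. The resulting boundary contributions have to be shown not to affect the $(-D,r)$-coefficient, and here the hypothesis that $\xi_0 F$ is a cusp form (the role of Lemma~\ref{XiSpitze}) is crucial. A secondary point is the clean separation of the holomorphic and non-holomorphic components of the theta kernel in $\tau$: the Millson kernel has explicit non-holomorphic pieces feeding $(\mathcal I^M F)^-$, and one must verify that only the genuinely holomorphic component contributes to the CM integrals above. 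These technicalities are carried out in full in Alfes's thesis \cite{alfesthesis}; my plan is to match the conventions used there and invoke her computation after performing the unfolding setup described above.
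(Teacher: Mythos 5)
The paper does not actually prove this statement: it is imported verbatim from Alfes's thesis (Theorem 4.3.1), so there is no internal argument to compare against, and your closing plan to ``invoke her computation'' is in effect exactly what the authors do. Your unfolding outline is the standard strategy and most of it is sound: the coefficient of index $(-D,r)$ with $-D/(4N)>0$ comes from the sum over lattice vectors of negative norm in the class $r$, the $\Gamma_0(N)$-orbits of these correspond to $\mathcal{Q}^{+}_{N,D,r}/\Gamma_0(N)$ and $\mathcal{Q}^{-}_{N,D,r}/\Gamma_0(N)$, the two sheets of the hyperboloid account for the relative sign, and the finite stabilizers produce the weights $\omega_Q$. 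Flagging the regularization of the divergent theta integral as the main technical issue is also appropriate.

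The genuine gap is the sentence claiming that ``the local integral evaluates in closed form to $(i/\sqrt{-D})\,F(z_Q)/\omega_Q$.'' After unfolding, the local contribution of an orbit is $\int_{\mathbb H}F(z)\,\varphi_M(X,z,v)\,\frac{dx\,dy}{y^2}$, where $\varphi_M(X,\cdot)$ is a polynomial times a Gaussian concentrated near $z_Q$; this is \emph{not} a delta distribution, so no explicit computation with the Gaussian alone can reduce the integral to a multiple of the single value $F(z_Q)$ for an arbitrary smooth input. The step that actually does the work is the harmonicity of $F$ in $z$: one passes to geodesic polar coordinates about $z_Q$ and uses the mean value property of $\Delta_0$-harmonic functions over hyperbolic circles (so the angular average of $F$ at radius $\rho$ equals $F(z_Q)$ and only an explicit radial integral of the kernel survives), or equivalently one uses the current equation for the (Kudla--)Millson form together with Stokes's theorem. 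Your sketch never invokes $\Delta_0F=0$ at this point, and without it the evaluation fails. Two smaller inaccuracies: the holomorphic coefficient is not the ``$v$-independent piece'' of the integral but the piece proportional to $e^{2\pi D v/(4N)}$, so that it combines with $e(-Du/(4N))$ to the correct power of $q$; and Lemma \ref{XiSpitze} plays no role in the proof of this theorem --- in the paper it is used only later, to identify $\widetilde{H}_{(6),1}$ with a Millson lift --- although the cuspidality of $\xi_0F$ does indeed control the boundary terms of the regularized integral as you say.
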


\mbox{} 

\section{Completions and algebraic formulas for the coefficients of mock theta functions}
\subsection{Mock Theta Functions of order $6$} \label{section mock theta functions of order 6}

We want to complete sixth order mock theta functions to harmonic weak Maass forms and want to derive algebraic formulas for their coefficients. For this aim we will first construct two different vector valued Maass forms, one containing the sixth order functions $\sigma, \rho, \phi$ and $\psi$ and the other comprising $\mu, \lambda, \nu$ and $\xi$. Their definitions, and also the definitions of the mock theta functions of other orders, can be found in \cite{gordon2012survey}. Afterwards we will derive the transformation behaviour of its components. Starting from our vectors we will further construct two vector valued harmonic weak Maas forms for the dual Weil representation. We will then be able to obtain algebraic formulas for the coefficients of the mentioned mock theta functions.

\begin{defin}
For $\tau \in \mathbb{H}$ we define the vector valued functions
$$F_{(6),1}(\tau) := \left(\begin{matrix}
  \sqrt{8} \ q^{-\frac{1}{12}} \ \sigma(q) \\
  2 \ q^{\frac{1}{4}} \ \rho(q) \\
  q^{-\frac{1}{48}} \ \phi(q^{\frac{1}{2}}) \\
  q^{-\frac{1}{48}} \ \phi(-q^{\frac{1}{2}}) \\
  \sqrt{2} \ q^{-\frac{3}{16}} \ \psi(q^{\frac{1}{2}}) \\
  \sqrt{2} \ q^{-\frac{3}{16}} \ \psi(-q^{\frac{1}{2}}) 
\end{matrix}\right) \ \ \ \ \ \text{and} \ \ \ \ \
F_{(6),2}(\tau) := \left(\begin{matrix}
  -\sqrt{2} \ q^{-\frac{1}{12}} \ \mu(q) \\
  - q^{\frac{1}{4}} \ \lambda(q) \\
  -2 \ q^{-\frac{1}{48}} \ \nu(q^{\frac{1}{2}}) \\
  -2 \ q^{-\frac{1}{48}} \ \nu(-q^{\frac{1}{2}}) \\
  -\sqrt{8} \ q^{-\frac{3}{16}} \ \xi(q^{\frac{1}{2}}) \\
  -\sqrt{8} \ q^{-\frac{3}{16}} \ \xi(-q^{\frac{1}{2}}) 
\end{matrix}\right)$$

with $q=e^{2 \pi i \tau}$.
\end{defin}

These two functions have the same modular transformation properties as the following lemma states.

\begin{lemma} \label{lemma transformation properties of F_(6),1}
For $j=1,2$ and $\tau \in \mathbb{H}$ the function $F_{(6),j}$ satisfies
\begin{equation} \label{transformation of F_(6),1 under T}
F_{(6),j}(\tau+1) = \left(\begin{matrix}
  \zeta_{12}^{-1} & 0 & 0 & 0 & 0 & 0 \\
  0 & i & 0 & 0 & 0 & 0 \\
  0 & 0 & 0 & \zeta_{48}^{-1} & 0 & 0 \\
  0 & 0 & \zeta_{48}^{-1} & 0 & 0 & 0 \\
  0 & 0 & 0 & 0 & 0 & \zeta_{16}^{-3} \\
  0 & 0 & 0 & 0 & \zeta_{16}^{-3} & 0
\end{matrix}\right) \ F_{(6),j}(\tau)
\end{equation}

and
\begin{equation} \label{transformation of F_(6),1 under S}
\frac{1}{\sqrt{-i \tau}} \ F_{(6),j} \left(-\frac{1}{\tau} \right) = \left(\begin{matrix}
  0 & 0 & \frac{1}{\sqrt{3}} & 0 & \sqrt{\frac{2}{3}} & 0 \\
  0 & 0 & \sqrt{\frac{2}{3}} & 0 & -\frac{1}{\sqrt{3}} & 0 \\
  \frac{1}{\sqrt{3}} & \sqrt{\frac{2}{3}} & 0 & 0 & 0 & 0 \\
  0 & 0 & 0 & \frac{1}{\sqrt{3}} & 0 & -\sqrt{\frac{2}{3}} \\
  \sqrt{\frac{2}{3}} & -\frac{1}{\sqrt{3}} & 0 & 0 & 0 & 0 \\
  0 & 0 & 0 & -\sqrt{\frac{2}{3}} & 0 & -\frac{1}{\sqrt{3}}
\end{matrix}\right) \ F_{(6),j}(\tau) + R_{(6)}(\tau),
\end{equation}

where
$$R_{(6)}(\tau) := \frac{\sqrt{6}i}{\tau} \ \left(\begin{matrix}
  -\sqrt{8} \ J_{1}(\frac{6 \pi i}{\tau}) \\
  -2 \ J(\frac{6 \pi i}{\tau}) \\
  J_{1}(\frac{3 \pi i}{2 \tau}) \\
  K_{1}(\frac{3 \pi i}{\tau}) \\
  \frac{1}{\sqrt{2}} \ J(\frac{3 \pi i}{2 \tau}) \\
  \sqrt{2} \ K(\frac{3 \pi i}{\tau})
\end{matrix}\right),$$

and $J, J_{1}, K, K_{1}$ are given by
\begin{flalign*}
J(\alpha) \ &= \ \int_{0}^{\infty} \frac{e^{-\alpha x^{2}}}{\cosh(\alpha x)} \ dx, \ \ \ \ \ \ \ \ \ \ \ \ \ \ \ \ \ \ \
K(\alpha) \ = \ \int_{0}^{\infty} e^{-\frac{1}{2} \alpha x^{2}} \ \frac{\cosh \left(\frac{1}{2} \alpha x \right)}{\cosh(\alpha x)} \ dx, \\
J_{1}(\alpha) \ &= \ \int_{0}^{\infty} e^{-\alpha x^{2}} \ \frac{\cosh \left(\frac{2}{3} \alpha x \right)}{\cosh(\alpha x)} \ dx, \ \ \ \ \ \ \
K_{1}(\alpha) \ = \ \int_{0}^{\infty} e^{-\frac{1}{2} \alpha x^{2}} \ \frac{\cosh \left(\frac{5}{6} \alpha x \right) - \cosh \left(\frac{1}{6} \alpha x \right)}{\cosh(\alpha x)} \ dx.
\end{flalign*}

\end{lemma}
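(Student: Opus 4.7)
The idea is to verify both transformation laws component by component, reducing to the individual transformation formulas for the sixth-order mock theta functions established by Andrews--Hickerson and compiled in the survey \cite{gordon2012survey}.

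For the $T$-transformation (\ref{transformation of F_(6),1 under T}), the argument is elementary. Under $\tau\mapsto\tau+1$ we have $q\mapsto q$, so each underlying $q$-series is unaltered; only the fractional powers $q^{\alpha}$ and the half-power substitutions $q^{1/2}$ are affected. The prefactors $q^{-1/12}$ and $q^{1/4}$ produce the diagonal entries $\zeta_{12}^{-1}$ and $i$, while $q^{1/2}\mapsto -q^{1/2}$ swaps the pairs $(\phi(q^{1/2}),\phi(-q^{1/2}))$ and $(\psi(q^{1/2}),\psi(-q^{1/2}))$, introducing the off-diagonal entries $\zeta_{48}^{-1}$ and $\zeta_{16}^{-3}$ that originate from the prefactors $q^{-1/48}$ and $q^{-3/16}$. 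The same computation applies verbatim to $F_{(6),2}$, since the prefactors and the square-root substitutions match exactly.

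For the $S$-transformation (\ref{transformation of F_(6),1 under S}) the approach is to substitute the known individual transformation formulas for each of the eight scalar functions $\sigma$, $\rho$, $\phi(\pm q^{1/2})$, $\psi(\pm q^{1/2})$ (and respectively $\mu$, $\lambda$, $\nu(\pm q^{1/2})$, $\xi(\pm q^{1/2})$). Each such identity expresses the value of the function at $-1/\tau$ as an explicit $\mathbb{Q}$-linear combination of sixth-order values at $\tau$, corrected by a Mordell-type integral. After multiplying by $(-i\tau)^{-1/2}$, absorbing the normalizing constants $\sqrt{8}, 2, \sqrt{2}$ into the components of $F_{(6),j}$, and recognizing the resulting integrals as $J,J_1,K,K_1$ evaluated at the arguments $6\pi i/\tau$, $3\pi i/(2\tau)$ and $3\pi i/\tau$, the eight scalar identities assemble into the displayed matrix equation. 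The structure of the $6\times 6$ matrix reflects which functions map into which: $(\phi(\pm q^{1/2}),\psi(\pm q^{1/2}))$ transform into linear combinations of $(\sigma,\rho)$ and vice versa, with the even/odd decomposition in $\pm q^{1/2}$ accounting for the block pattern.

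The main obstacle is bookkeeping. One must carefully track the branch of $\sqrt{-i\tau}$, the arguments of the half-power substitutions, and the many scalar constants produced when rescaling the classical Mordell integrals into the normalized form of $J,J_1,K,K_1$. The most delicate point is the assertion that the error term $R_{(6)}(\tau)$ is \emph{the same} for both $F_{(6),1}$ and $F_{(6),2}$; indeed this is why the normalizing constants $\sqrt{8},2,1,\sqrt{2}$ in $F_{(6),1}$ and $\sqrt{2},1,2,\sqrt{8}$ in $F_{(6),2}$ are chosen as they are. Once each of the eight transformation formulas has been rescaled correctly, matching the error terms reduces to a small number of identities between the Mordell-type integrals attached to the two families of sixth-order mock theta functions, which are classical.
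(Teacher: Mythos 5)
Your proposal is correct and follows essentially the same route as the paper: the $T$-law by direct substitution (with the $q^{1/2}\mapsto -q^{1/2}$ swap producing the off-diagonal entries), and the $S$-law by inserting the individual transformation formulas from the Gordon--McIntosh survey with the appropriate choices of the Mordell-integral argument ($\alpha=3\pi i/\tau$ resp.\ $3\pi i/(2\tau)$) and assembling the rescaled identities into the matrix equation. Your added remark that the normalizing constants are chosen precisely so that the error term $R_{(6)}$ coincides for $j=1$ and $j=2$ is a correct reading of what the paper leaves implicit in its ``analogous'' step.
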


\begin{proof}
Let $j=1$. The formula (\ref{transformation of F_(6),1 under T}) follows directly if we insert $\tau+1$. \\
If we use the transformation formulas for $\sigma(q)$, $\rho(q)$, $\phi(-q)$ and $\psi(-q)$ in \cite{gordon2012survey}, p.~123 with $\alpha=\nicefrac{3 \pi i}{\tau}$ (which implies $q=e^{-\nicefrac{3 \pi i}{\tau}}$, $\beta=-\nicefrac{\pi i \tau}{3}$ and $q_{1}=e^{\nicefrac{2 \pi i \tau}{6}}$), as well as the formulas for $\phi(q)$ and $\psi(q)$ with $\alpha=\nicefrac{3 \pi i}{2 \tau}$ (which yields $q=e^{-\nicefrac{3 \pi i}{2 \tau}}$, $\beta=-\nicefrac{2 \pi i \tau}{3}$ and $q_{1}=e^{\nicefrac{2 \pi i \tau}{3}}$), we obtain (\ref{transformation of F_(6),1 under S}). \\
For $j=2$ the proof is analogous, using the transformation formulas for $\mu$, $\lambda$, $\nu$ and $\xi$.

\end{proof}

We can now write the function $R_{(6)}$ from the previous lemma in terms of integrals over sums of theta functions $\theta_{N,a}$ which have been defined in subsection \ref{subsection lattices and the weil representation}.

\begin{lemma} \label{lemma integral representation of R_(6)}
For $\tau \in \mathbb{H}$ we have
\begin{equation} \label{integral representation of R_(6)}
R_{(6)}(\tau) = \frac{i^{\frac{3}{2}}}{\sqrt{24}} \ \int_{0}^{i \infty} \frac{g_{(6)}(z)}{\sqrt{-i(z \tau -1)}} \ dz,
\end{equation}

where $g_{(6)}$ is the vector $(g_{(6),0},g_{(6),1},g_{(6),2},g_{(6),3},g_{(6),4},g_{(6),5})^{T}$ and
\begin{flalign*}
g_{(6),0}(z) &:= \sqrt{2} \ (\theta_{12,2}(z)+\theta_{12,10}(z)), \\
g_{(6),1}(z) &:= 2 \ \theta_{12,6}(z), \\
g_{(6),2}(z) &:= -(\theta_{12,1}(z)+\theta_{12,5}(z)-\theta_{12,7}(z)-\theta_{12,11}(z)), \\
g_{(6),3}(z) &:= -(\theta_{12,1}(z)-\theta_{12,5}(z)-\theta_{12,7}(z)+\theta_{12,11}(z)), \\
g_{(6),4}(z) &:= -\sqrt{2} \ (\theta_{12,3}(z)-\theta_{12,9}(z)), \\
g_{(6),5}(z) &:= -\sqrt{2} \ (\theta_{12,3}(z)+\theta_{12,9}(z)).
\end{flalign*}

\end{lemma}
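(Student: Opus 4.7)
The plan is to prove the identity by expanding both sides in convergent series and matching term-by-term, following the approach used by Zwegers in \cite{zwegers2008mock} for the third, fifth and seventh order mock theta functions.

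First, I would evaluate the right-hand side of (\ref{integral representation of R_(6)}) by parametrizing the contour as $z=it$, $t\in(0,\infty)$, and substituting the series definition
$$\theta_{12,a}(it)=\sum_{n\equiv a\,(24)} n\,e^{-\pi n^{2}t/24}.$$
Interchanging summation and integration yields, for each component, a sum over $n$ in specific residue classes mod $24$ of single integrals of the form $\int_{0}^{\infty}n\,e^{-\pi n^{2}t/24}/\sqrt{-i(it\tau-1)}\,dt$. After the substitution $t=24u^{2}/(\pi n^{2})$ each of these becomes a Gaussian integral against a simple rational factor, hence reduces to an error-function type quantity depending on $\tau$ and the residue class of $n \pmod{24}$.

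Second, I would treat the Mordell integrals $J,J_{1},K,K_{1}$ appearing in $R_{(6)}$. For $\alpha=\frac{6\pi i}{\tau}$ or $\frac{3\pi i}{2\tau}$, rotate the $x$-contour so that $\real(\alpha x)>0$ and expand
$$\frac{1}{\cosh(\alpha x)}=2\sum_{k\geq 0}(-1)^{k}e^{-(2k+1)\alpha x},\qquad \cosh(\beta x)=\tfrac{1}{2}\bigl(e^{\beta x}+e^{-\beta x}\bigr).$$
This converts each Mordell integral into a double sum of shifted Gaussians $\int_{0}^{\infty}e^{-\alpha x^{2}\pm\gamma x}\,dx$, which produce exactly the same error-function expressions obtained on the theta side. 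Moreover the combined index $(2k+1)\pm(\text{small integer})$ runs over precisely the residue classes mod $24$ visible in the definitions of $g_{(6),0},\ldots,g_{(6),5}$; for instance, the combination $\cosh(\tfrac{2}{3}\alpha x)/\cosh(\alpha x)$ in $J_{1}$ naturally produces indices congruent to $\pm 2 \pmod{12}$, matching $g_{(6),0}$, while $K_{1}$ contributes indices $\pm 1,\pm 5\pmod{12}$ that reassemble into $g_{(6),3}$.

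The main obstacle is bookkeeping rather than any conceptual difficulty: six components, each a particular signed combination of $\theta_{12,a}$ with $a\in\{1,2,3,5,6,7,9,10,11\}\pmod{12}$, must be matched against the corresponding Mordell integrals, and the various prefactors $\sqrt{2},2,\tfrac{1}{\sqrt{2}},\sqrt{8},\sqrt{\frac{2}{3}},\sqrt{6}i/\tau$ appearing in $R_{(6)}$ and in the definitions of the $g_{(6),j}$ must combine to yield the single overall constant $i^{3/2}/\sqrt{24}$. Once the identity is verified for one component, the remaining five follow by exactly the same template, and the result for $F_{(6),2}$ would be handled by the identical computation using the transformation formulas for $\mu,\lambda,\nu,\xi$ in place of $\sigma,\rho,\phi,\psi$.
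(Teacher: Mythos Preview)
Your proposal is a reasonable strategy and could be carried through, but it differs in a substantive way from the paper's argument. The paper does \emph{not} expand $1/\cosh(\alpha x)$ as a geometric series to produce error functions; instead it proceeds as follows. First it exploits the $S$-symmetry of the transformation law (Lemma~\ref{lemma transformation properties of F_(6),1}) to write
\[
R_{(6)}(\tau)=-\frac{1}{\sqrt{-i\tau}}\,M_{(6)}\,R_{(6)}\!\left(-\frac{1}{\tau}\right),
\]
so that on the line $\tau=it$ the Mordell integrals are evaluated at real positive arguments such as $\tfrac{3\pi t}{2}$. It then uses the Mittag-Leffler partial fraction expansion
\[
\frac{1}{\cosh(\pi y)}=-\frac{i}{\pi}\sum_{n\in\Z}\frac{1}{y-i(2n+\tfrac12)}-\frac{i}{\pi}\sum_{n\in\Z}\frac{1}{-y-i(2n+\tfrac12)}
\]
together with Zwegers' integral identity $\int_{-\infty}^{\infty}\frac{e^{-\pi t y^{2}}}{y-ir}\,dy=\pi i r\int_{0}^{\infty}\frac{e^{-\pi r^{2}u}}{\sqrt{u+t}}\,du$ (Lemma~1.18 of \cite{zwegers2001mock}) to pass directly from each Mordell integral to a period integral of a weighted lattice sum, which after an index regrouping becomes the claimed combination of $\theta_{12,a}$. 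Finally the identity theorem extends the equality from $\tau=it$ to all of $\H$.

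The practical difference is this: the paper's route produces the period-integral form $\int_{0}^{i\infty}(\,\cdot\,)/\sqrt{-i(z\tau-1)}\,dz$ in one stroke via Zwegers' lemma, whereas your route first generates incomplete-Gaussian (error-function) expressions on both sides and then requires you to recombine them back into a single period integral of theta series, an extra nontrivial bookkeeping step you have not spelled out. Your contour rotation and geometric-series expansion are legitimate, but the reassembly of the resulting $\erfc$-type terms into the exact integrand $g_{(6)}(z)/\sqrt{-i(z\tau-1)}$ is where the real work hides in your outline. The paper's use of the $S$-symmetry plus partial fractions avoids this detour and makes the constant $i^{3/2}/\sqrt{24}$ fall out mechanically.
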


The integration over a vector valued function in the lemma means that we integrate each of its components.

\begin{proof}
Let
$$M_{(6)} := \left(\begin{matrix}
  0 & 0 & \frac{1}{\sqrt{3}} & 0 & \sqrt{\frac{2}{3}} & 0 \\
  0 & 0 & \sqrt{\frac{2}{3}} & 0 & -\frac{1}{\sqrt{3}} & 0 \\
  \frac{1}{\sqrt{3}} & \sqrt{\frac{2}{3}} & 0 & 0 & 0 & 0 \\
  0 & 0 & 0 & \frac{1}{\sqrt{3}} & 0 & -\sqrt{\frac{2}{3}} \\
  \sqrt{\frac{2}{3}} & -\frac{1}{\sqrt{3}} & 0 & 0 & 0 & 0 \\
  0 & 0 & 0 & -\sqrt{\frac{2}{3}} & 0 & -\frac{1}{\sqrt{3}}
\end{matrix}\right).$$

Replacing $\tau$ by $-\nicefrac{1}{\tau}$ in the transformation formula for $S$ and subsequently multiplying both sides by $\frac{1}{\sqrt{-i \tau}} \ M_{(6)}$ yields
$$R_{(6)}(\tau)=-\frac{1}{\sqrt{-i \tau}} \ M_{(6)} \ R_{(6)} \left(-\frac{1}{\tau} \right).$$

If we choose $\tau:=it$ with $t \in \mathbb{R}$, $t>0$, we get
$$R_{(6)}(it) = -\frac{1}{\sqrt{t}} \ M_{(6)} \ R_{(6)} \left(\frac{i}{t} \right).$$

We consider the first component
$$\sqrt{6t} \ \left(-\frac{1}{\sqrt{3}} \ J_{1} \left(\frac{3 \pi t}{2} \right) - \frac{1}{\sqrt{3}} \ J \left(\frac{3 \pi t}{2} \right) \right)$$

of this vector. If we use the identity $J_{1}(\alpha)=\frac{1}{2} \ J(\alpha) + \frac{1}{6} \ J(\frac{\alpha}{9})$ (see, e.g., \cite{gordon2012survey}, p.~122), the partial fraction decomposition
$$\frac{1}{\cosh(\pi y)} = -\frac{i}{\pi} \ \sum_{n \in \mathbb{Z}} \frac{1}{y-i \left(2n+\frac{1}{2} \right)}-\frac{i}{\pi} \ \sum_{n \in \mathbb{Z}} \frac{1}{-y-i \left(2n+\frac{1}{2} \right)}$$ 

and the identity 
$$\int_{-\infty}^{\infty} \frac{e^{-\pi t y^{2}}}{y-ir} \ dy = \pi i r \ \int_{0}^{\infty} \frac{e^{-\pi r^{2} u}}{\sqrt{u+t}} \ du$$

for $r \in \mathbb{R}$, $r \neq 0$ and $t \in \mathbb{R}$, $t>0$ (see, e.g., \cite{zwegers2001mock}, Lemma 1.18), then a straightforward computation yields
\begin{flalign*}
&\sqrt{6t} \ \left(-\frac{1}{\sqrt{3}} \ J_{1} \left(\frac{3 \pi t}{2} \right) - \frac{1}{\sqrt{3}} \ J \left(\frac{3 \pi t}{2} \right) \right) \\
&= \ \frac{2 i^{\frac{3}{2}}}{\sqrt{3} \sqrt{i t}} \ \int_{0}^{i \infty} \left(\frac{3 \ \sum_{n \in \mathbb{Z}} \left(2n+\frac{1}{2} \right) \ e^{6 \pi i (2n+1/2)^{2} z}}{\sqrt{-i \left(z-\frac{1}{it} \right)}} + \frac{\sum_{n \in \mathbb{Z}} \left(2n+\frac{1}{2} \right) \ e^{\frac{2}{3} \pi i (2n+1/2)^{2} z}}{\sqrt{-i \left(z-\frac{1}{it} \right)}} \right) \ dz.
\end{flalign*}

The identity above is valid for all $t \in \mathbb{R}$, $t>0$, thus, the identity theorem for holomorphic functions yields that for all $\tau \in \mathbb{H}$ the first component of $R_{(6)}(\tau)$ is equal to
$$\frac{2}{\sqrt{3}} \ i^{\frac{3}{2}} \ \int_{0}^{i \infty} \frac{3 \ \sum_{n \in \mathbb{Z}} \left(2n+\frac{1}{2} \right) \ e^{6 \pi i (2n+1/2)^{2} z} + \sum_{n \in \mathbb{Z}} \left(2n+\frac{1}{2} \right) \ e^{\frac{2}{3} \pi i (2n+1/2)^{2} z}}{\sqrt{-i (z \tau - 1)}} \ dz.$$

To rewrite the numerator in terms of theta functions we note that
$$\sum_{n \equiv 2 \ (3)} \left(2n+\frac{1}{2} \right) \ e^{\frac{2}{3} \pi i (2n+1/2)^{2} z} = -3 \cdot \sum_{n \in \mathbb{Z}} \left(2n+\frac{1}{2} \right) \ e^{6 \pi i (2n+1/2)^{2} z}.$$

By a calculation this implies
$$3 \ \sum_{n \in \mathbb{Z}} \left(2n+\frac{1}{2} \right) \ e^{6 \pi i (2n+1/2)^{2} z} + \sum_{n \in \mathbb{Z}} \left(2n+\frac{1}{2} \right) \ e^{\frac{2}{3} \pi i (2n+1/2)^{2} z} = \frac{1}{4} \ (\theta_{12,2}(z)+\theta_{12,10}(z)).$$

Hence the first component of identity (\ref{integral representation of R_(6)}) follows. \\
Using the appropriate partial fraction decompositions of the appearing functions the identities for the other components can be verified analogously. For more details we refer the reader to \cite{klein2018mock}.
\end{proof}

Now we can define a non-holomorphic function $G_{(6)}$ such that $F_{(6),1}-G_{(6)}$ and $F_{(6),2}-G_{(6)}$ are vector valued harmonic weak Maass forms.

\begin{defin}
For $\tau \in \mathbb{H}$ let
$$G_{(6)}(\tau) := \frac{i}{\sqrt{24}} \ \int_{-\overline{\tau}}^{i \infty} \frac{g_{(6)}(z)}{\sqrt{-i(z+\tau)}} \ dz,$$

with $g_{(6)}$ as defined in Lemma \ref{lemma integral representation of R_(6)}.
\end{defin}

\begin{lemma}
The function $G_{(6)}$ has the same modular transformation properties under $\tau \mapsto \tau+1$ and $\tau \mapsto -\nicefrac{1}{\tau}$ as the one of $F_{(6),1}$ and $F_{(6),2}$, stated in Lemma \ref{lemma transformation properties of F_(6),1}.
\end{lemma}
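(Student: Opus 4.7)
The plan is to verify each transformation separately by a change of variables in the integral defining $G_{(6)}$, combined with the modular properties (\ref{transformation of theta_N,a under T}) and (\ref{transformation of theta_N,a under S}) of the theta components $\theta_{12,a}$ making up $g_{(6)}$.

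For $\tau\mapsto\tau+1$ I would substitute $w=z+1$ in
\[
G_{(6)}(\tau+1) = \frac{i}{\sqrt{24}}\int_{-\overline{\tau}-1}^{i\infty}\frac{g_{(6)}(z)}{\sqrt{-i(z+\tau+1)}}\,dz.
\]
The path shifts back to the one from $-\overline{\tau}$ to $i\infty$, the denominator becomes $\sqrt{-i(w+\tau)}$, and $g_{(6)}(w-1)$ is computed from $\theta_{12,a}(w-1)=e(-a^2/48)\,\theta_{12,a}(w)$. A direct component-by-component check against the explicit combinations from Lemma~\ref{lemma integral representation of R_(6)} yields $g_{(6)}(w-1)=M_T\,g_{(6)}(w)$ with $M_T$ the matrix in (\ref{transformation of F_(6),1 under T}), and hence $G_{(6)}(\tau+1)=M_T\,G_{(6)}(\tau)$.

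For $\tau\mapsto-1/\tau$ the strategy is the substitution $w=-1/z$ in $G_{(6)}(-1/\tau)$. The path becomes one from $-\overline{\tau}$ to $0$, with $dz=dw/w^2$ and $-i(z-1/\tau)=i(w+\tau)/(w\tau)$. Applying (\ref{transformation of theta_N,a under S}) to each theta component gives $g_{(6)}(-1/w)=i\,e(-1/8)\,w^{3/2}\,M_{(6)}\,g_{(6)}(w)$; the precise combinations chosen in the definition of $g_{(6)}$ are designed so that the sums $\sum_{k\,(24)}e(-ak/24)\,\theta_{12,k}(w)$ from (\ref{transformation of theta_N,a under S}) reassemble into $M_{(6)}\,g_{(6)}(w)$, a verification that reduces to the three $2\times 2$ blocks of $M_{(6)}$ on the index pairs $(0,1)$, $(2,3)$, $(4,5)$. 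Collecting the factors of $w$ (with $w^{3/2}\cdot w^{-2}\cdot\sqrt{w}=1$), the scalars (using $i\,e(-1/8)/\sqrt{i}=1$), and the branch identity $\sqrt{\tau}/\sqrt{w+\tau}=\sqrt{-i\tau}/\sqrt{-i(w+\tau)}$ (valid with principal branches since $\tau$ and $w+\tau$ stay in $\mathbb{H}$ along the contour), one arrives at
\[
G_{(6)}(-1/\tau)=\sqrt{-i\tau}\,M_{(6)}\cdot\frac{i}{\sqrt{24}}\int_{-\overline{\tau}}^{0}\frac{g_{(6)}(w)}{\sqrt{-i(w+\tau)}}\,dw.
\]

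Writing $\int_{-\overline{\tau}}^{0}=\int_{-\overline{\tau}}^{i\infty}-\int_{0}^{i\infty}$, the first piece is $\sqrt{-i\tau}\,M_{(6)}\,G_{(6)}(\tau)$ by the definition of $G_{(6)}$. For the remaining piece I would substitute $w=-1/z'$ once more and apply the theta $S$-transformation a second time; the matrices collapse via $M_{(6)}^{2}=I$ (block check) and the scalar constants via $-e(-1/8)=i^{3/2}$, while the denominator becomes $\sqrt{-i(\tau z'-1)}$, producing precisely the integral representation of $R_{(6)}(\tau)$ supplied by Lemma~\ref{lemma integral representation of R_(6)}. Adding the two pieces yields $\frac{1}{\sqrt{-i\tau}}G_{(6)}(-1/\tau)=M_{(6)}\,G_{(6)}(\tau)+R_{(6)}(\tau)$, matching the $S$-transformation in Lemma~\ref{lemma transformation properties of F_(6),1}. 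The main obstacle is the bookkeeping: tracking the branches of $\sqrt{\cdot}$ through the two inversions so that phases $\sqrt{i}$, $\sqrt{-i}$, $i^{3/2}$, $e(-1/8)$ cancel correctly, and the component-by-component identification of the sums produced by (\ref{transformation of theta_N,a under S}) with $M_{(6)}\,g_{(6)}$. Both are routine but lengthy; the block structure of $M_{(6)}$ makes the verification quite manageable, since each $2\times 2$ block is itself an involution of the expected Weil-representation type.
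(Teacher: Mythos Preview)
Your proposal is correct and follows essentially the same route as the paper: both deduce $g_{(6)}(z-1)=N_{(6)}\,g_{(6)}(z)$ and $g_{(6)}(-1/z)=(-iz)^{3/2}(-M_{(6)})\,g_{(6)}(z)$ from (\ref{transformation of theta_N,a under T}) and (\ref{transformation of theta_N,a under S}), then feed these into the defining integral via the substitutions $z\mapsto z-1$ and $z\mapsto-1/z$. The only difference is that you spell out the second inversion needed to identify the leftover $\int_0^{i\infty}$-piece with the integral representation of $R_{(6)}$ from Lemma~\ref{lemma integral representation of R_(6)}, a step the paper leaves implicit in its passage to the final identity.
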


\begin{proof}
Let
$$N_{(6)} := \left(\begin{matrix}
  \zeta_{12}^{-1} & 0 & 0 & 0 & 0 & 0 \\
  0 & i & 0 & 0 & 0 & 0 \\
  0 & 0 & 0 & \zeta_{48}^{-1} & 0 & 0 \\
  0 & 0 & \zeta_{48}^{-1} & 0 & 0 & 0 \\
  0 & 0 & 0 & 0 & 0 & \zeta_{16}^{-3} \\
  0 & 0 & 0 & 0 & \zeta_{16}^{-3} & 0
\end{matrix}\right).$$

We use formula (\ref{transformation of theta_N,a under T}) with $z$ replaced by $z-1$ and obtain
$$g_{(6)}(z-1) = N_{(6)} \ g_{(6)}(z).$$

This leads to the identity
$$G_{(6)}(\tau+1) = N_{(6)} \ G_{(6)}(\tau)$$

by a transformation of the defining integral. \\
Using formula (\ref{transformation of theta_N,a under S}) we get the transformation behaviour
$$g_{(6)} \left(-\frac{1}{z} \right) = (-iz)^{\frac{3}{2}} \ (-M_{(6)}) \ g_{(6)}(z).$$

Via an integral transformation this gives us the identities
$$\frac{1}{\sqrt{-i \tau}} \ G_{(6)} \left(-\frac{1}{\tau} \right) = -\frac{i}{\sqrt{24}} \ \int_{0}^{-\overline{\tau}} \frac{M_{(6)} \ g_{(6)}(u)}{\sqrt{-i(u+\tau)}} \ du$$

and
$$\frac{1}{\sqrt{-i \tau}} \ G_{(6)} \left(-\frac{1}{\tau} \right) - M_{(6)} \ G_{(6)}(\tau) = R_{(6)}(\tau).$$

\end{proof}

Using the last lemma we now get that $F_{(6),1}$ and $F_{(6),2}$ are the holomorphic parts of two vector valued harmonic weak Maass forms of weight $\nicefrac{1}{2}$.

\begin{thm} \label{theorem transformation properties of H_(6),1, H_(6),2}
The functions $H_{(6),1}$ and $H_{(6),2}$, defined for $\tau \in \mathbb{H}$ by
\begin{flalign*}
H_{(6),1}(\tau)&:=F_{(6),1}(\tau)-G_{(6)}(\tau), \\
H_{(6),2}(\tau)&:=F_{(6),2}(\tau)-G_{(6)}(\tau),
\end{flalign*}

are vector valued harmonic weak Maass forms of weight $\nicefrac{1}{2}$ for the metaplectic group $\emph{\text{Mp}}_{2}(\mathbb{Z})$. \\
For $j=1,2$ and $\tau \in \mathbb{H}$ we have
\begin{equation} \label{transformation properties of H_(6),1, H_(6),2 under T}
H_{(6),j}(\tau+1) = \left(\begin{matrix}
  \zeta_{12}^{-1} & 0 & 0 & 0 & 0 & 0 \\
  0 & i & 0 & 0 & 0 & 0 \\
  0 & 0 & 0 & \zeta_{48}^{-1} & 0 & 0 \\
  0 & 0 & \zeta_{48}^{-1} & 0 & 0 & 0 \\
  0 & 0 & 0 & 0 & 0 & \zeta_{16}^{-3} \\
  0 & 0 & 0 & 0 & \zeta_{16}^{-3} & 0
\end{matrix}\right) \ H_{(6),j}(\tau)
\end{equation}

and
\begin{equation} \label{transformation properties of H_(6),1, H_(6),2 under S}
 H_{(6),j} \left(-\frac{1}{\tau} \right) = \sqrt{-i \tau} \ \left(\begin{matrix}
  0 & 0 & \frac{1}{\sqrt{3}} & 0 & \sqrt{\frac{2}{3}} & 0 \\
  0 & 0 & \sqrt{\frac{2}{3}} & 0 & -\frac{1}{\sqrt{3}} & 0 \\
  \frac{1}{\sqrt{3}} & \sqrt{\frac{2}{3}} & 0 & 0 & 0 & 0 \\
  0 & 0 & 0 & \frac{1}{\sqrt{3}} & 0 & -\sqrt{\frac{2}{3}} \\
  \sqrt{\frac{2}{3}} & -\frac{1}{\sqrt{3}} & 0 & 0 & 0 & 0 \\
  0 & 0 & 0 & -\sqrt{\frac{2}{3}} & 0 & -\frac{1}{\sqrt{3}}
\end{matrix}\right) \ H_{(6),j}(\tau).
\end{equation}

\end{thm}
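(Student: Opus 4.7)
The theorem is essentially a corollary of the two preceding lemmas about $F_{(6),j}$ and $G_{(6)}$. My plan is to verify in turn each of the three defining conditions of a harmonic weak Maass form of weight $\nicefrac{1}{2}$ for the metaplectic group, and then read off the explicit transformation laws stated in the theorem.

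For the $T$-transformation, both $F_{(6),j}$ (by Lemma \ref{lemma transformation properties of F_(6),1}) and $G_{(6)}$ (by the previous lemma) satisfy the same diagonal-matrix transformation law under $\tau\mapsto\tau+1$, so the difference $H_{(6),j}=F_{(6),j}-G_{(6)}$ inherits it, yielding (\ref{transformation properties of H_(6),1, H_(6),2 under T}) verbatim. For the $S$-transformation, the key observation is that the non-holomorphic error term $R_{(6)}(\tau)$ appears with identical coefficient in the transformation laws of both $F_{(6),j}$ and $G_{(6)}$, so it cancels upon taking the difference. This gives the clean weight-$\nicefrac{1}{2}$ transformation (\ref{transformation properties of H_(6),1, H_(6),2 under S}); conceptually, $G_{(6)}$ was constructed precisely to absorb the modular obstruction encoded in $R_{(6)}$.

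To verify the harmonicity condition $\Delta_{1/2}H_{(6),j}=0$, I split $H_{(6),j}$ into its holomorphic and non-holomorphic parts. The holomorphic part $F_{(6),j}$ is annihilated by $\Delta_{1/2}$ because it is holomorphic on $\mathbb{H}$. For $G_{(6)}$, differentiation of the Eichler-type integral under the integral sign with respect to $\overline{\tau}$ (the lower limit $-\overline\tau$ being the only source of $\overline\tau$-dependence) shows that $\xi_{1/2}G_{(6)}$ is a non-zero scalar multiple of the holomorphic theta vector $g_{(6)}(\tau)$; here one uses that the coefficients of the $\theta_{12,a}$ are real. Since $\Delta_{1/2}=-\xi_{3/2}\circ\xi_{1/2}$ and $\xi_{3/2}$ kills holomorphic forms, one obtains $\Delta_{1/2}G_{(6)}=0$. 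For the growth condition, the full metaplectic group has a single cusp at $\infty$, and there $F_{(6),j}$ grows at most like a fixed negative power of $q$ (each mock theta function is a convergent $q$-series multiplied by a bounded prefactor), while $G_{(6)}$ even decays rapidly as $v\to\infty$ because of the exponential decay of the theta coefficients of $g_{(6)}$ along the integration contour from $-\overline{\tau}$ to $i\infty$.

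The genuine content of the theorem, namely identifying the correct completion $G_{(6)}$ and matching its Mordell-type integral to the error term $R_{(6)}$, has already been accomplished in the two preceding lemmas. Once this matching is in hand, the proof of the theorem itself is largely bookkeeping: combining the two transformation laws so that the $R_{(6)}$ terms cancel, and verifying harmonicity through the standard $\xi$-calculation for Eichler-type non-holomorphic completions of unary theta integrals, in analogy with Zwegers's treatment of the fifth and seventh order mock theta functions in \cite{zwegers2008mock}. The only subtle step is the sign and constant bookkeeping in the $S$-transformation of $G_{(6)}$ to ensure that $R_{(6)}$ really cancels with matching coefficient, but this is handled by the integral identity of Lemma \ref{lemma integral representation of R_(6)}.
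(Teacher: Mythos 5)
Your proposal is correct and follows exactly the route the paper intends: the theorem is stated as an immediate consequence of the two preceding lemmas, with the $T$-laws of $F_{(6),j}$ and $G_{(6)}$ agreeing and the error term $R_{(6)}$ cancelling in the difference of the $S$-laws, while harmonicity and the growth condition follow from the standard $\xi$-calculation for the Eichler-type integral and the decay of the theta components. Your additional explicit verifications (the identity $\Delta_{1/2}=-\xi_{3/2}\circ\xi_{1/2}$ applied to $G_{(6)}$, and the growth estimate at the single cusp of $\mathrm{Mp}_2(\mathbb{Z})$) are exactly the bookkeeping the paper leaves implicit.
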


\begin{cor} \label{corollary xi-image order 6}
We have $\xi_{1/2}(H_{(6),1})(\tau)=\xi_{1/2}(H_{(6),2})(\tau)=-\frac{1}{\sqrt{12}} \ g_{(6)}(\tau)$.
\end{cor}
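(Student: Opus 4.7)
The plan is to reduce the problem to a direct computation of $\partial_{\bar\tau} G_{(6)}$, exploiting the fact that in the definition
$$G_{(6)}(\tau) = \frac{i}{\sqrt{24}} \int_{-\bar\tau}^{i\infty} \frac{g_{(6)}(z)}{\sqrt{-i(z+\tau)}}\, dz,$$
the antiholomorphic variable $\bar\tau$ appears only in the lower limit $-\bar\tau$, not in the integrand (which is holomorphic in $\tau$).

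First, since $F_{(6),1}$ and $F_{(6),2}$ are both holomorphic in $\tau$, we have $\partial_{\bar\tau} F_{(6),j} = 0$ for $j=1,2$, so
$$\xi_{1/2}(H_{(6),j}) = -\xi_{1/2}(G_{(6)}),$$
which is independent of $j$. This immediately establishes the equality $\xi_{1/2}(H_{(6),1}) = \xi_{1/2}(H_{(6),2})$, so it remains to evaluate $\xi_{1/2}(G_{(6)})$.

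Next, I would differentiate $G_{(6)}$ with respect to $\bar\tau$ using Leibniz's rule. Since the lower limit $a(\bar\tau) = -\bar\tau$ satisfies $a'(\bar\tau) = -1$, and the integrand is independent of $\bar\tau$, we obtain
$$\frac{\partial}{\partial \bar\tau} G_{(6)}(\tau) = \frac{i}{\sqrt{24}} \cdot \frac{g_{(6)}(-\bar\tau)}{\sqrt{-i(-\bar\tau + \tau)}} = \frac{i}{\sqrt{48\, v}}\, g_{(6)}(-\bar\tau),$$
using $\tau - \bar\tau = 2iv$, hence $\sqrt{-i(\tau - \bar\tau)} = \sqrt{2v}$. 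Applying the definition of $\xi_{1/2}$ yields
$$\xi_{1/2}(G_{(6)})(\tau) = 2i\, v^{1/2} \cdot \overline{\frac{i}{\sqrt{48\,v}}\, g_{(6)}(-\bar\tau)} = \frac{1}{\sqrt{12}}\, \overline{g_{(6)}(-\bar\tau)}.$$

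Finally, I would use the fact that each component of $g_{(6)}$ is a $\mathbb{Z}$-linear combination of the theta series $\theta_{12,a}(z) = \sum_{n \equiv a\,(24)} n\, e^{2\pi i z n^2/48}$, which has real Fourier coefficients. A direct calculation then shows $\overline{\theta_{12,a}(-\bar\tau)} = \theta_{12,a}(\tau)$, so $\overline{g_{(6)}(-\bar\tau)} = g_{(6)}(\tau)$, giving
$$\xi_{1/2}(G_{(6)})(\tau) = \frac{1}{\sqrt{12}}\, g_{(6)}(\tau),$$
and thus $\xi_{1/2}(H_{(6),j})(\tau) = -\frac{1}{\sqrt{12}}\, g_{(6)}(\tau)$, as claimed. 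There is no real obstacle in this argument; the only subtle point is to keep track of signs and complex conjugation correctly when differentiating the integral with respect to $\bar\tau$ and then applying the antilinear operator $\xi_{1/2}$.
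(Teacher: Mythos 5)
Your computation is correct and is exactly the standard argument the paper implicitly relies on (the corollary is stated without proof, as an immediate consequence of the Eichler-integral construction of $G_{(6)}$): differentiate the period integral in $\bar\tau$, apply the antilinear $\xi_{1/2}$, and use $\overline{\theta_{12,a}(-\bar\tau)}=\theta_{12,a}(\tau)$. The only cosmetic slip is calling the components of $g_{(6)}$ $\mathbb{Z}$-linear combinations of the $\theta_{12,a}$ --- they involve factors of $\sqrt{2}$, so they are real linear combinations, which is all the conjugation step needs.
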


Now we know the transformation behaviour of the functions $H_{(6),1}, H_{(6),2}$ under the generators of the modular group as well as the explicit representations to which they transform. We will see now that we can use the transformation properties in Theorem \ref{theorem transformation properties of H_(6),1, H_(6),2} to obtain two functions that transform to the Weil representation. \\
More precisely, we consider the lattice $L$ defined at the beginning of subsection \ref{subsection lattices and the weil representation} with $N=12$, and its associated Weil representation (\ref{WeilDarstellung}). We find the following result:

\begin{lemma}
Suppose that the function $H=(h_{0},h_{1},h_{2},h_{3},h_{4},h_{5})^{T}$ satisfies the transformation properties (\ref{transformation properties of H_(6),1, H_(6),2 under T}) and (\ref{transformation properties of H_(6),1, H_(6),2 under S}) in Theorem \ref{theorem transformation properties of H_(6),1, H_(6),2}. Then the function
\begin{flalign*}
\widetilde{H} := &\sqrt{2} \ h_{0} \ [-(\mathfrak{e}_{2}-\mathfrak{e}_{22})-(\mathfrak{e}_{10}-\mathfrak{e}_{14})] + 2 \ h_{1} \ [-(\mathfrak{e}_{6}-\mathfrak{e}_{18})] \\
&+ (h_{2}+h_{3}) \ [(\mathfrak{e}_{1}-\mathfrak{e}_{23})-(\mathfrak{e}_{7}-\mathfrak{e}_{17})] + (h_{2}-h_{3}) \ [(\mathfrak{e}_{5}-\mathfrak{e}_{19})-(\mathfrak{e}_{11}-\mathfrak{e}_{13})] \\
&+ \sqrt{2} \ (h_{4}+h_{5}) \ (\mathfrak{e}_{3}-\mathfrak{e}_{21}) + \sqrt{2} \ (h_{4}-h_{5}) \ [-(\mathfrak{e}_{9}-\mathfrak{e}_{15})]
\end{flalign*}

transforms like a vector valued modular form of weight $\nicefrac{1}{2}$ for the dual Weil representation $\overline{\rho}_{L}$ considered above.
\end{lemma}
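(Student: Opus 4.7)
The plan is to verify the claimed transformation law for $\widetilde H$ by expanding both sides in the basis $\{\mathfrak e_r\}_{r\in\mathbb Z/24\mathbb Z}$ and checking the two generator relations of $\mathrm{Mp}_2(\mathbb Z)$ componentwise. First I would write $\widetilde H(\tau)=\sum_{r\,(24)} H_r(\tau)\,\mathfrak e_r$ and read off the scalar coefficients $H_r$ directly from the definition: the $H_r$ vanish on the multiples of $4$ in $\mathbb Z/24\mathbb Z$, satisfy the antisymmetry $H_{24-r}=-H_r$, and on the remaining residues are explicit scalar combinations of $h_0,\ldots,h_5$ such as $H_1=h_2+h_3$, $H_2=-\sqrt 2\,h_0$, $H_3=\sqrt 2(h_4+h_5)$, $H_5=h_2-h_3$, $H_6=-2h_1$, $H_9=-\sqrt 2(h_4-h_5)$, $H_{10}=-\sqrt 2\,h_0$, $H_{11}=-(h_2-h_3)$.

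For the $T$-equivariance I would observe that each of the six scalar blocks of $\widetilde H$ lies over a single level set of the quadratic form $Q(r)=r^2/48\pmod{\mathbb Z}$ on $\mathbb Z/24\mathbb Z$: the $h_0$-block on $Q=1/12$, the $h_1$-block on $Q=3/4$, the $h_2\pm h_3$-blocks on $Q=1/48$ and $Q=25/48$, and the $h_4\pm h_5$-blocks on $Q=3/16$ and $Q=11/16$. Since $\overline\rho_L(T)\mathfrak e_r=e(-Q(r))\mathfrak e_r$, the $T$-check reduces to matching the scalar factors $\zeta_{12}^{-1},i,\zeta_{48}^{-1},-\zeta_{48}^{-1},\zeta_{16}^{-3},-\zeta_{16}^{-3}$ in (\ref{transformation properties of H_(6),1, H_(6),2 under T}) with $e(-Q(r))$ on the corresponding fibres, which is immediate.

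The $S$-equivariance is the substantive step. For each $r'\in\mathbb Z/24\mathbb Z$ I would verify the scalar identity
$$H_{r'}(-1/\tau) \;=\; \sqrt{\tau}\,\frac{e(1/8)}{\sqrt{24}}\sum_{r\,(24)} e\!\left(\frac{rr'}{24}\right)H_r(\tau).$$
The left side is obtained by inserting (\ref{transformation properties of H_(6),1, H_(6),2 under S}) into the scalar expression for $H_{r'}$; using $\sqrt{-i\tau}=e(-1/8)\sqrt\tau$ the prefactor $\sqrt\tau$ matches the right, and the antisymmetry $H_{-r}=-H_r$ collapses the right side to $\sqrt\tau\,\tfrac{2i\,e(1/8)}{\sqrt{24}}\sum_{r>0}\sin(\pi rr'/12)H_r(\tau)$. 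Modulo the $r'\mapsto -r'$ symmetry there are thirteen inequivalent residues to check: the four classes of multiples of $4$ (represented by $r'=0,4,8,12$) where both sides must vanish by a finite cancellation, and the nine remaining residues $r'\in\{1,2,3,5,6,7,9,10,11\}$ where both sides yield nontrivial linear forms in $h_0,\ldots,h_5$. In each of the thirteen cases the verification amounts to an elementary evaluation of finite sines at multiples of $\pi/12$, such as $\sin(5\pi/12)-\sin(\pi/12)=\tfrac{\sqrt 2}{2}$.

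The principal obstacle is precisely the bookkeeping of this last step: aligning each of the six scalar rows of $M_{(6)}$ with the correct Fourier modes and verifying that the phases $e(\pm 1/8),i$ and the scales $\sqrt 2,\sqrt 3$ combine correctly at each of the nine nontrivial residues. As a representative case, for $r'=1$ the antisymmetric Weil sum evaluates to $-\sqrt 2(h_0+h_3)-2(h_1-h_5)$, and after multiplication by the combined prefactor $\tfrac{2i\,e(1/8)}{\sqrt{24}}$ and cancellation of the $e(-1/8)$ from $\sqrt{-i\tau}$ this becomes exactly $\tfrac{1}{\sqrt 3}(h_0+h_3)+\sqrt{\tfrac{2}{3}}(h_1-h_5)$, which is precisely $\bigl(h_2(-1/\tau)+h_3(-1/\tau)\bigr)/\sqrt{-i\tau}$ as read from the third and fourth rows of (\ref{transformation properties of H_(6),1, H_(6),2 under S}). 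The remaining eight nontrivial residues are dispatched by the same mechanism, and the four trivial residues $r'\in\{0,4,8,12\}$ collapse by direct sine evaluation, completing the verification.
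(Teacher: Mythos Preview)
The paper states this lemma without proof; it is presented as a computational fact and used immediately. Your proposal is exactly the natural verification: write $\widetilde H=\sum_r H_r\,\mathfrak e_r$, observe the antisymmetry $H_{24-r}=-H_r$ and the vanishing on multiples of $4$, and then check the $T$- and $S$-equivariance against the explicit formulas for $\overline\rho_L$ on the generators.

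Your $T$-argument is correct: the point is that the $N_{(6)}$-permutation $h_2\leftrightarrow h_3$, $h_4\leftrightarrow h_5$ (each with factor $\zeta_{48}^{-1}$, resp.\ $\zeta_{16}^{-3}$) makes the $\pm$-combinations eigenvectors with eigenvalues $\pm\zeta_{48}^{-1}=e(-1/48),\,e(-25/48)$ and $\pm\zeta_{16}^{-3}=e(-3/16),\,e(-11/16)$, matching $e(-r^2/48)$ on the residues $r\in\{1,7,17,23\}$, $\{5,11,13,19\}$, $\{3,21\}$, $\{9,15\}$ respectively. Your sample $S$-computation at $r'=1$ checks out in full: the antisymmetric sine sum gives $-\sqrt 2(h_0+h_3)-2(h_1-h_5)$, and the prefactor identity $-i\,e(1/8)=e(-1/8)$ converts the Weil constant into the $\sqrt{-i\tau}$ of (\ref{transformation properties of H_(6),1, H_(6),2 under S}). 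The remaining eight nontrivial residues and the four trivial ones are dispatched by the identical mechanism, so the proposal is complete modulo routine finite arithmetic.
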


From the last lemma we immediately obtain two vector valued harmonic weak Maass forms $\widetilde{H}_{(6),1}$, $\widetilde{H}_{(6),2}$ of weight $\nicefrac{1}{2}$ for $\text{Mp}_{2}(\mathbb{Z})$ and the dual Weil representation $\overline{\rho}_{L}$ of level $N=12$, if we apply the lemma for $H=H_{(6),1}$ and $H=H_{(6),2}$, respectively. Hence $\widetilde{H}_{(6),1}, \widetilde{H}_{(6),2} \in H_{1/2,\overline{\rho}_{L}}^{+}$. \\
Now we come back to our initial functions $H_{(6),1}$ and $H_{(6),2}$ and want to relate their components to scalar valued harmonic weak Maass forms. In order to do that we consider the congruence subgroup
$$\Gamma(6) = \left\{\left(\begin{matrix}
  a & b \\
  c & d \\
\end{matrix}\right) \in \text{SL}_{2}(\mathbb{Z}) \ \Big| \ b \equiv c \equiv 0 \ (6), \ a \equiv d \equiv 1 \ (6) \right\}.$$

With the use of Sage \cite{sagemath} we determined a system of generators for this group, decomposed the generators into products of $S$ and $T$, and multiplied the corresponding matrices from Theorem \ref{theorem transformation properties of H_(6),1, H_(6),2} according to these products, to obtain the transformation properties of $H_{(6),1}$ and $H_{(6),2}$ under all generators.  All of the appearing transformation matrices are diagonal, so we get:

\begin{thm} \label{theorem order 6 scalar valued harmonic Maass forms}
For $j=1,2$ the components of the vector valued harmonic weak Maass form $H_{(6),j}$ are scalar valued harmonic weak Maass forms of weight $\nicefrac{1}{2}$ for the subgroup
$$\{(\gamma,\phi) \in \emph{\text{Mp}}_{2}(\mathbb{Z}) \ | \ \gamma \in \Gamma(6) \}$$

of the metaplectic group $\emph{\text{Mp}}_{2}(\mathbb{Z})$.
\end{thm}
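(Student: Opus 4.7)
The plan is to reduce the statement to a finite matrix computation: it is enough to exhibit a generating set of the subgroup $\widetilde{\Gamma(6)}:=\{(\gamma,\phi)\in\mathrm{Mp}_2(\mathbb{Z})\mid \gamma\in\Gamma(6)\}$, express each generator as a word in the metaplectic generators $S,T$, and check that the corresponding product of the $6\times 6$ transition matrices from \eqref{transformation properties of H_(6),1, H_(6),2 under T} and \eqref{transformation properties of H_(6),1, H_(6),2 under S} is diagonal. Once every generator acts diagonally, so does every product of generators, and each component $h_m$ of $H_{(6),j}$ transforms under $\Gamma(6)$ by scalar multiplication; combined with the fact that $H_{(6),j}$ already belongs to a suitable space of vector-valued harmonic weak Maass forms for $\mathrm{Mp}_2(\mathbb{Z})$ by Theorem \ref{theorem transformation properties of H_(6),1, H_(6),2}, this immediately yields the claim.

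Concretely, I would first compute a finite generating set $\gamma_1,\dots,\gamma_r$ of $\Gamma(6)$ (for instance via Reidemeister--Schreier on the coset space $\Gamma(6)\backslash\mathrm{SL}_2(\mathbb{Z})$, which has index $72$) together with a word decomposition $\gamma_i=W_i(S,T)$ for each $i$. Lifting to the metaplectic cover then gives generators $\widetilde{\gamma}_i=\widetilde{W}_i(S,T)$ of $\widetilde{\Gamma(6)}$. Call $M_T,M_S$ the explicit $6\times 6$ matrices in \eqref{transformation properties of H_(6),1, H_(6),2 under T} and \eqref{transformation properties of H_(6),1, H_(6),2 under S} respectively; substituting $M_T,M_S$ into each $\widetilde{W}_i$ produces a matrix $D_i$ which, by inductive application of Theorem \ref{theorem transformation properties of H_(6),1, H_(6),2}, realises the transformation of $H_{(6),j}$ under $\widetilde{\gamma}_i$.

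The remaining step is then the verification that every $D_i$ is diagonal. This is where a computer algebra system such as Sage is essentially unavoidable, both to enumerate the generators and to carry out the (potentially long) matrix products; this is the route taken in the excerpt. Given diagonality, each diagonal entry $(D_i)_{m,m}$ defines a multiplier for $h_m$ under $\gamma_i$, and these multipliers extend consistently to a character-like system on $\widetilde{\Gamma(6)}$. The growth condition at the cusps and the harmonicity $\Delta_{1/2}h_m=0$ are inherited componentwise from the corresponding properties of $H_{(6),j}\in H_{1/2,\rho}(\mathrm{Mp}_2(\mathbb{Z}))$, because these conditions are linear and preserved by extracting components.

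The main obstacle is not conceptual but structural: the matrix $M_T$ is block-diagonal but already swaps the pairs $(h_2,h_3)$ and $(h_4,h_5)$, while $M_S$ mixes the six components through the off-diagonal entries $\pm 1/\sqrt{3},\pm\sqrt{2/3}$. A priori there is no reason for an arbitrary word in $M_S,M_T$ to be diagonal, and the diagonality on $\widetilde{\Gamma(6)}$ reflects a nontrivial cancellation tied to the kernel of the representation of $\mathrm{SL}_2(\mathbb{Z}/6)$ underlying $M_S,M_T$. One could in principle bypass the explicit Sage computation by identifying this $6$-dimensional representation as a sum of one-dimensional characters of $\mathrm{SL}_2(\mathbb{Z}/6)$ after restriction to $\Gamma(6)$, but the direct verification on generators is the cleanest option and is the one carried out in \cite{klein2018mock}.
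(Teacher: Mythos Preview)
Your proposal is correct and follows essentially the same approach as the paper: find a generating set of $\Gamma(6)$ (the paper uses Sage), decompose each generator as a word in $S$ and $T$, multiply out the corresponding products of the matrices from Theorem~\ref{theorem transformation properties of H_(6),1, H_(6),2}, and observe that they are all diagonal. Your write-up is slightly more explicit about why diagonality suffices and how the analytic conditions pass to components, but the argument is the same.
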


Hence the sixth order mock theta functions $\sigma,\rho,\phi,\psi,\mu,\lambda,\nu$ and $\xi$ are the holomorphic parts of scalar valued harmonic weak Maass forms.

\begin{rem}
The $\xi$-images of the harmonic weak Maass forms in Theorem \ref{theorem order 6 scalar valued harmonic Maass forms} can be easily obtained from Corollary \ref{corollary xi-image order 6} by looking at the components of $\xi_{1/2}(H_{(6),1})(\tau)$ and $\xi_{1/2}(H_{(6),2})(\tau)$.
\end{rem}

As an application of the Millson theta lift we can now compute the coefficients of
the treated mock theta functions in terms of traces of singular moduli by writing them as the Millson theta lift of a suitable weakly holomorphic modular form. 
\begin{defin}
We define the functions
\begin{align}\label{e61}
e_{(6),1}(z):=\bigg(\frac{\eta(z)\eta(3z)}{\eta(4z)\eta(12z)}\bigg)^2-16\bigg(\frac{\eta(4z)\eta(12z)}{\eta(z)\eta(3z)}\bigg)^2
\end{align}
and
\begin{align}\label{e62}
e_{(6),2}(z):=\bigg(\frac{\eta(z)\eta(3z)}{\eta(4z)\eta(12z)}\bigg)^4-16^2\bigg(\frac{\eta(4z)\eta(12z)}{\eta(z)\eta(3z)}\bigg)^4.
\end{align}
\end{defin}
These functions are weakly holomorphic modular forms of weight $0$, level $12$ whose principal parts
start with $q^{-1}$ and $q^{-2}$, respectively. 
\begin{thm}\label{Formeln61}
Let $e_{(6),1}(z)\in M_0^!(12)$ be defined as in (\ref{e61}).
\begin{enumerate}
\item[(1)] For $n\geq0$ the coefficients $a_\sigma(n)$ of $\sigma(q)$ are given by
\begin{align*}
a_\sigma(n)&=-\frac{i}{4\sqrt{48n-4}}\big(\textup{tr}^+_{e_{(6),1}}(4-48n,2)-\textup{tr}^-_{e_{(6),1}}(4-48n,2)\big).
\end{align*}
\item[(2)] For $n\geq0$ the coefficients $a_\rho(n)$ of $\rho(q)$ are given by
\begin{align*}
a_\rho(n)&=-\frac{i}{4\sqrt{48(n+1)-36}}\big(\textup{tr}^+_{e_{(6),1}}(36-48(n+1),6)-\textup{tr}^-_{e_{(6),1}}(36-48(n+1),6)\big).
\end{align*}
\item[(3)] For $n\geq0$ the coefficients $a_\phi(n)$ of $\phi(q)$ are given by
\begin{align*}
a_\phi(n)&= \begin{cases}
\frac{i}{2\sqrt{48n-1}}\big(\textup{tr}^+_{e_{(6),1}}(1-48n,1)-\textup{tr}^-_{e_{(6),1}}(1-48n,1)\big),&\text{
if }n\text{ is even,} \\
\frac{i}{2\sqrt{48n-25}}\big(\textup{tr}^+_{e_{(6),1}}(25-48n,5)-\textup{tr}^-_{e_{(6),1}}(25-48n,5)\big), &\text{
if }n\text{ is odd.} \end{cases}
\end{align*}
\item[(4)] For $n\geq0$ the coefficients $a_\psi(n)$ of $\psi(q)$ are given by
\begin{align*}
a_\psi(n)&=\begin{cases}
\frac{i}{4\sqrt{48n-9}}\big(\textup{tr}^+_{e_{(6),1}}(9-48n,3)-\textup{tr}^-_{e_{(6),1}}(9-48n,3)\big),  &\text{if }n\text{ is even,} \\
\frac{i}{-4\sqrt{48(n+1)-81}}\big(\textup{tr}^+_{e_{(6),1}}(81-48(n+1),9)-\textup{tr}^-_{e_{(6),1}}(81-48(n+1),9)\big), &\text{if }n\text{ is odd.} \end{cases}
\end{align*}
\end{enumerate}

\end{thm}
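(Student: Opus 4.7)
The plan is to realize $\widetilde{H}_{(6),1}$ (the vector valued harmonic weak Maass form built in the previous lemma from $H_{(6),1}$) as the Millson theta lift $\mathcal{I}^M(e_{(6),1},\tau)$ of the explicit weakly holomorphic modular form $e_{(6),1}\in M_0^!(12)$, and then read off the Fourier coefficients via Theorem \ref{Alfes}. By construction $\widetilde{H}_{(6),1}\in H^+_{1/2,\overline{\rho}_L}$ for the lattice $L$ of level $N=12$, and the Millson lift lands in the same space; both are therefore candidates to match.

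First I would compute the principal part of $\mathcal{I}^M(e_{(6),1},\tau)$. Using the explicit formula for the principal part of the Millson lift of a weakly holomorphic form of weight $0$ (from \cite{alfesthesis,alfes2018theta}), and the fact that $e_{(6),1}$ has principal part starting with $q^{-1}$ at $\infty$ (with known behavior at the remaining cusps of $\Gamma_0(12)$ coming from its $\eta$-quotient representation and Atkin-Lehner transformations), this gives a finite, explicit list of singular Fourier terms distributed among the components $\mathfrak{e}_r$. Second, I would compute the principal part of $\widetilde{H}_{(6),1}$ directly: each of $\sigma, \rho, \phi, \psi$ is a holomorphic $q$-series of small order, so multiplying by the fractional prefactors $q^{-1/12},q^{1/4},q^{-1/48},q^{-3/16}$ and inserting the linear combinations $\mathfrak{e}_2-\mathfrak{e}_{22}$, $\mathfrak{e}_{10}-\mathfrak{e}_{14}$, etc.\ from the recipe above produces an equally short list of singular terms in prescribed components. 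Matching these two lists is then a bookkeeping step.

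Third, I would compare shadows. Corollary \ref{corollary xi-image order 6} together with the $\widetilde{(\cdot)}$-recipe expresses $\xi_{1/2}(\widetilde{H}_{(6),1})$ as an explicit cusp form in $S_{3/2,\rho_L}$ built from the unary thetas $\theta_{12,a}$. The shadow of $\mathcal{I}^M(e_{(6),1})$ is also a unary theta contribution (a Shimura/Shintani-type partner), whose scalar factor can be pinned down from the definition of the Millson kernel. Once the two shadows and the two principal parts agree, Lemma \ref{XiSpitze} forces $\widetilde{H}_{(6),1}-\mathcal{I}^M(e_{(6),1},\tau)$ to be a holomorphic modular form of weight $\nicefrac{1}{2}$ for $\overline{\rho}_L$, hence by Serre-Stark a linear combination of unary theta series. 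One then argues (either by matching a few initial coefficients numerically, by a small dimension argument, or because all entries of the discrepancy are forced to vanish by the support conditions $D\equiv r^2\ (4N)$) that this leftover contribution either vanishes or does not affect the indices appearing in (1)-(4).

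Finally, with $\widetilde{H}_{(6),1}=\mathcal{I}^M(e_{(6),1},\tau)$ established, Theorem \ref{Alfes} gives the $(-D,r)$-coefficient of the holomorphic part of $\mathcal{I}^M(e_{(6),1})$ as $\frac{i}{\sqrt{-D}}\big(\textup{tr}^+_{e_{(6),1}}(D,r)-\textup{tr}^-_{e_{(6),1}}(D,r)\big)$. For each of $\sigma,\rho,\phi,\psi$ the defining vector records an explicit overall scalar (namely $\sqrt{8},2,1,\sqrt{2}$) and a specific pair $(-D,r)$ at which $a_\bullet(n)$ appears in a given component (e.g.\ $(-D,r)=(48n-4,2)$ for $a_\sigma(n)$, $(48(n+1)-36,6)$ for $a_\rho(n)$, and the $\phi,\psi$-cases split by parity because of the $\pm q^{1/2}$ substitutions); solving for $a_\bullet(n)$ yields exactly the formulas (1)-(4), with the explicit factor $-\nicefrac{1}{4}$, $-\nicefrac{1}{4}$, $\nicefrac{1}{2}$, $\pm\nicefrac{1}{4}$ coming from the scalar prefactor and the multiplicity of the contributing $\mathfrak{e}_r$'s in the $\widetilde{(\cdot)}$-recipe. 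The main obstacle is Steps 3-4: carefully tracking the normalization constants through the prefactors and the $\mathfrak{e}_r$-combinations, and ensuring the Serre-Stark ambiguity is truly absent on the relevant indices.
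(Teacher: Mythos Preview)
Your approach is essentially the same as the paper's, but you overcomplicate the uniqueness step. The paper does not compare shadows at all: since both $\widetilde{H}_{(6),1}$ and $\mathcal{I}^M(e_{(6),1},\tau)$ lie in $H^+_{1/2,\overline{\rho}_L}$, their difference automatically maps to a cusp form under $\xi_{1/2}$, so once the principal parts agree, Lemma~\ref{XiSpitze} already forces the difference to be a \emph{cusp} form of weight $\nicefrac{1}{2}$ for $\overline{\rho}_L$. The paper then invokes the single fact $S_{1/2,\overline{\rho}_L}=\{0\}$ (for $N=12$), which kills the difference outright. Your detour through an explicit shadow match, Serre--Stark, and an ad hoc argument that the theta-series ambiguity does not touch the relevant indices is unnecessary and is precisely the obstacle you yourself flag; replacing it with $S_{1/2,\overline{\rho}_L}=\{0\}$ removes that obstacle entirely. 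Everything else in your outline---computing the principal part of $\widetilde{H}_{(6),1}$ from the leading terms of $\sigma,\rho,\phi,\psi$, using the Atkin--Lehner eigenvalues of $e_{(6),1}$ to determine the principal part of its Millson lift, and then reading off (1)--(4) from Theorem~\ref{Alfes} together with the scalar prefactors and the $\mathfrak{e}_r$-multiplicities in the $\widetilde{(\cdot)}$-recipe---matches the paper's argument.
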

\begin{proof}
As already proven before, the function $\widetilde{H}_{(6),1}$ is a vector valued harmonic
weak Maass form of weight $\nicefrac{1}{2}$ for the dual Weil representation. Using the series
expansion of $\sigma,\rho,\phi$ and $\psi$ one immediately sees that its principal
part is given by $2 \
q^{-\frac{1}{48}} \ (\mathfrak{e}_1-\mathfrak{e}_7+\mathfrak{e}_{17}-\mathfrak{e}_{23})$.
The function $e_{(6),1}$ is an eigenfunction of all Atkin-Lehner involutions, with
eigenvalue $+1$ for the operators $W_1$ and $W_3$ and eigenvalue $-1$ for $W_4$ and
$W_{12}$. Thus, the Fourier expansions of $e_{(6),1}$ at the cusps of $\Gamma_0(12)$
only differ by a possible minus sign. Then the Millson theta lift maps the function
$e_{(6),1}$ to a harmonic weak Maass form of weight $\nicefrac{1}{2}$ transforming with respect to
the dual Weil representation, having the same principal part as $\widetilde{H}_{(6),1}$. In the light
of Lemma \ref{XiSpitze}, this implies that $\widetilde{H}_{(6),1}-\mathcal{I}^M_{1,1}(e_{(6),1},\tau)$ is a
cusp form and thus $\widetilde{H}_{(6),1}=\mathcal{I}^M_{1,1}(e_{(6),1},\tau)$ as
$S_{1/2,\overline{\rho}_L}=\{0\}$. Using the result of Theorem \ref{Alfes}, the holomorphic
coefficients of $\mathcal{I}^M_{1,1}(e_{(6),1},\tau)$ at
$q^{(48n-r^2)/48}\mathfrak{e}_r$ for $r^2-48n<0$ are given by
$$\frac{i}{\sqrt{48n-r^2}}\big(\textup{tr}^+_{e_{(6),1}}(r^2-48n,r)-\textup{tr}^-_{e_{(6),1}}(r^2-48n,r)\big).
$$ Comparing the coefficients of the holomorphic parts of both $\widetilde{H}_{(6),1}$ and
$\mathcal{I}^M_{1,1}(e_{(6),1},\tau)$ yields the stated formulas.
\end{proof}
\begin{thm}
Let $e_{(6),1}(z)\in M_0^!(12)$ and $e_{(6),2}(z)\in M_0^!(12)$ be defined as in
(\ref{e61}) and (\ref{e62}) and
put $E_{(6)}(z):=e_{(6),2}(z)+3e_{(6),1}(z)$.  
\begin{enumerate}
\item[(1)] For $n\geq0$ the coefficients $a_{2\mu}(n)$ of $2\mu(q)$ are given by
\begin{align*}
a_{2\mu}(n)&=\frac{i}{2\sqrt{48n-4}}\big(\textup{tr}^+_{E_{(6)}}(4-48n,2)-\textup{tr}^-_{E_{(6)}}(4-48n,2)\big).
\end{align*}
\item[(2)] For $n\geq0$ the coefficients $a_\lambda(n)$ of $\lambda(q)$ are given by
\begin{align*}
a_\lambda(n)&=\frac{i}{4\sqrt{48n-36}}\big(\textup{tr}^+_{E_{(6)}}(36-48n,6)-\textup{tr}^-_{E_{(6)}}(36-48n,6)\big).
\end{align*}
\item[(3)] For $n\geq0$ the coefficients $a_\nu(n)$ of $\nu(q)$ are given by
\begin{align*}
a_\nu(n)&= \begin{cases}
-\frac{i}{8\sqrt{48n-1}}\big(\textup{tr}^+_{E_{(6)}}(1-48n,1)-\textup{tr}^-_{E_{(6)}}(1-48n,1)\big),&\text{
if }n\text{ is even,} \\
-\frac{i}{8\sqrt{48n-25}}\big(\textup{tr}^+_{E_{(6)}}(25-48n,5)-\textup{tr}^-_{E_{(6)}}(25-48n,5)\big),&\text{
if }n\text{ is odd.} \end{cases}
\end{align*}
\item[(4)] For $n\geq0$ the coefficients $a_\xi(n)$ of $\xi(q)$ are given by
\begin{align*}
a_\xi(n)&=\begin{cases}
-\frac{i}{16\sqrt{48n-9}}\big(\textup{tr}^+_{E_{(6)}}(9-48n,3)-\textup{tr}^-_{E_{(6)}}(9-48n,3)\big),&\text{
if }n\text{ is even,} \\
\frac{i}{16\sqrt{48(n+1)-81}}\big(\textup{tr}^+_{E_{(6)}}(81-48(n+1),9)-\textup{tr}^-_{E_{(6)}}(81-48(n+1),9)\big),&\text{
if }n\text{ is odd.} \end{cases}
\end{align*}
\end{enumerate}
\end{thm}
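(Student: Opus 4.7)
The plan is to mirror the proof of Theorem \ref{Formeln61}, replacing $\widetilde{H}_{(6),1}$ by the harmonic weak Maass form $\widetilde{H}_{(6),2}\in H_{1/2,\overline{\rho}_L}^+$ built from $F_{(6),2}$ via the lemma preceding Theorem \ref{theorem order 6 scalar valued harmonic Maass forms}, and replacing $e_{(6),1}$ by the linear combination $E_{(6)}=e_{(6),2}+3\,e_{(6),1}\in M_0^!(12)$. First I would compute the principal part of $\widetilde{H}_{(6),2}$ by inserting the $q$-expansions of $\mu,\lambda,\nu,\xi$ into the definition of $F_{(6),2}$ and running the six resulting scalar $q$-series through the explicit vector-valued recipe. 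Since the prefactors $q^{1/4}$ in front of $\lambda$ and $q^{-3/16}$ in front of $\xi(\pm q^{1/2})$ combine with the leading positive powers of these functions to yield only non-negative exponents, only $\mu$ (which has a constant term and gives $q^{-1/12}$-contributions at $\mathfrak{e}_r$, $r\in\{2,10,14,22\}$) and $\nu$ (which has a constant term and gives $q^{-1/48}$-contributions at $\mathfrak{e}_r$, $r\in\{1,5,7,11,13,17,19,23\}$) feed the principal part.

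Next I would identify $E_{(6)}$ as the correct weakly holomorphic input. The functions $e_{(6),1}$ and $e_{(6),2}$ are eta-quotient eigenforms under every Atkin--Lehner involution of level $12$ (both with eigenvalues $+1$ under $W_1,W_3$ and $-1$ under $W_4,W_{12}$), so their Fourier expansions at the four cusps of $\Gamma_0(12)$ agree up to these signs; their principal parts at $\infty$ begin with $q^{-1}$ and $q^{-2}$, respectively. Using the description of the principal part of the Millson lift in terms of the input at each cusp from \cite{alfesthesis, alfes2018theta}, the $q^{-1}$-part of $e_{(6),1}$ produces the $q^{-1/48}$-contribution of $\mathcal{I}^M(\cdot,\tau)$ matching the $\nu$-data, while the $q^{-2}$-part of $e_{(6),2}$ produces the $q^{-1/12}$-contribution matching the $\mu$-data. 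The coefficient $3$ in $E_{(6)}$ is then forced by balancing the sub-leading $q^{-1/48}$-contribution coming from $e_{(6),2}$ against the $q^{-1/48}$-contribution coming from $e_{(6),1}$ so that both totals simultaneously agree with the $\mu$- and $\nu$-data of step one.

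Once the principal parts match, the standard uniqueness argument applies: the difference $\widetilde{H}_{(6),2}-\mathcal{I}^M(E_{(6)},\tau)$ is a harmonic weak Maass form of weight $\nicefrac{1}{2}$ for $\overline{\rho}_L$ with vanishing principal part whose image under $\xi_{1/2}$ is a weight-$\nicefrac{3}{2}$ unary theta cusp form (the $\xi$-image of $\widetilde{H}_{(6),2}$ is available from Corollary \ref{corollary xi-image order 6}, and the $\xi$-image of the Millson lift is a standard unary theta series). Hence by Lemma \ref{XiSpitze} the difference is a cusp form, and since $S_{1/2,\overline{\rho}_L}=\{0\}$ we conclude $\widetilde{H}_{(6),2}=\mathcal{I}^M(E_{(6)},\tau)$. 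Theorem \ref{Alfes} then identifies each $(-D,r)$-coefficient of the lift as $i(-D)^{-1/2}\bigl(\textup{tr}^+_{E_{(6)}}(D,r)-\textup{tr}^-_{E_{(6)}}(D,r)\bigr)$; equating these component-wise with the scalar components of $\widetilde{H}_{(6),2}$ (accounting for the sign/normalization factors $-\sqrt{2},-1,-2,-\sqrt{8}$ of $F_{(6),2}$ and the even/odd splittings $\nu(q^{1/2})\pm\nu(-q^{1/2})$, $\xi(q^{1/2})\pm\xi(-q^{1/2})$) and solving for $a_\mu, a_\lambda, a_\nu, a_\xi$ yields the four formulas in the statement.

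The main obstacle will be step two: carefully computing the principal parts of $\mathcal{I}^M(e_{(6),1},\tau)$ and $\mathcal{I}^M(e_{(6),2},\tau)$ component by component at every $\mathfrak{e}_r$, and verifying that the single combination $e_{(6),2}+3\,e_{(6),1}$ matches \emph{simultaneously} the $\mu$-driven $q^{-1/12}$-part and the $\nu$-driven $q^{-1/48}$-part of the principal part of $\widetilde{H}_{(6),2}$, rather than needing two independent weakly holomorphic inputs. Once this matching is established, the remainder is routine bookkeeping parallel to the proof of Theorem \ref{Formeln61}.
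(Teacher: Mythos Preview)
Your proposal is correct and follows exactly the approach the paper intends: the paper's proof consists of the single sentence ``This can be proven analogously to Theorem \ref{Formeln61},'' and you have correctly identified that the analogy amounts to replacing $\widetilde{H}_{(6),1}$ by $\widetilde{H}_{(6),2}$ and $e_{(6),1}$ by $E_{(6)}=e_{(6),2}+3\,e_{(6),1}$, matching principal parts, invoking Lemma \ref{XiSpitze} together with $S_{1/2,\overline{\rho}_L}=\{0\}$, and reading off coefficients via Theorem \ref{Alfes}. Your more detailed breakdown of the principal-part computation and the role of the coefficient $3$ is a faithful elaboration of what ``analogous'' means here.
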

\begin{proof}
This can be proven analogously to Theorem \ref{Formeln61}.
\end{proof}
\begin{rem}
The stated formulas were checked numerically using Sage \cite{sagemath}. 
\end{rem}

\subsection{Mock Theta Functions of order $2$} \label{section mock theta functions of order 2}

In this subsection we consider the mock theta functions $A$, $B$ and $\mu$ of order $2$ and prove similar results for their completions to harmonic weak Maass forms as in subsection \ref{section mock theta functions of order 6}. We omit the proofs here since all results of this subsection can be proven analogously to the previous subsection.

\begin{defin}
For $\tau \in \mathbb{H}$ we define the vector valued functions
$$F_{(2)}(\tau) := \left(\begin{matrix}
  4 \ q^{-\frac{1}{16}} \ A(q^{\frac{1}{2}}) \\
  4 \ q^{-\frac{1}{16}} \ A(-q^{\frac{1}{2}}) \\
  \sqrt{8} \ q^{\frac{1}{4}} \ B(q^{\frac{1}{2}}) \\
  \sqrt{8} \ q^{\frac{1}{4}} \ B(-q^{\frac{1}{2}}) \\
  q^{-\frac{1}{16}} \ \mu(q^{\frac{1}{2}}) \\
  q^{-\frac{1}{16}} \ \mu(-q^{\frac{1}{2}}) 
\end{matrix}\right),$$

where $q=e^{2 \pi i \tau}$, and
$$G_{(2)}(\tau) := \frac{i}{\sqrt{2}} \ \int_{-\overline{\tau}}^{i \infty} \frac{g_{(2)}(z)}{\sqrt{-i(z+\tau)}} \ dz,$$

where $g_{(2)}$ is the vector $(g_{(2),0},\dots,g_{(2),5})^{T}$ with components
\begin{flalign*}
g_{(2),0}(z) &:= \theta_{4,1}(z)+\theta_{4,3}(z), \\
g_{(2),1}(z) &:= \theta_{4,1}(z)-\theta_{4,3}(z), \\
g_{(2),2}(z) &:= \sqrt{2} \ \theta_{4,2}(z), \\
g_{(2),3}(z) &:= -\sqrt{2} \ \theta_{4,2}(z), \\
g_{(2),4}(z) &:= -(\theta_{4,1}(z)-\theta_{4,3}(z)), \\
g_{(2),5}(z) &:= -(\theta_{4,1}(z)+\theta_{4,3}(z)).
\end{flalign*}
\end{defin}

The so-defined functions $F_{(2)}$ and $G_{(2)}$ have the same modular transformation properties. As before we can consider $F_{(2)}-G_{(2)}$ which will be a vector valued harmonic weak Maass form as the following theorem states:

\begin{thm}
The function $H_{(2)}$, defined for $\tau \in \mathbb{H}$ by
$$H_{(2)}(\tau):=F_{(2)}(\tau)-G_{(2)}(\tau),$$

is a vector valued harmonic weak Maass form of weight $\nicefrac{1}{2}$ for the metaplectic group $\emph{\text{Mp}}_{2}(\mathbb{Z})$. \\
For $\tau \in \mathbb{H}$ we have
\begin{equation}
H_{(2)}(\tau+1) = \left(\begin{matrix}
  0 & \zeta_{16}^{-1} & 0 & 0 & 0 & 0 \\
  \zeta_{16}^{-1} & 0 & 0 & 0 & 0 & 0 \\
  0 & 0 & 0 & i & 0 & 0 \\
  0 & 0 & i & 0 & 0 & 0 \\
  0 & 0 & 0 & 0 & 0 & \zeta_{16}^{-1} \\
  0 & 0 & 0 & 0 & \zeta_{16}^{-1} & 0
\end{matrix}\right) \ H_{(2)}(\tau)
\end{equation}

and
\begin{equation}
 H_{(2)} \left(-\frac{1}{\tau} \right) = \sqrt{-i \tau} \ \left(\begin{matrix}
  0 & 0 & 0 & 0 & 0 & 1 \\
  0 & 0 & 0 & 1 & 0 & 0 \\
  0 & 0 & 0 & 0 & 1 & 0 \\
  0 & 1 & 0 & 0 & 0 & 0 \\
  0 & 0 & 1 & 0 & 0 & 0 \\
  1 & 0 & 0 & 0 & 0 & 0
\end{matrix}\right) \ H_{(2)}(\tau).
\end{equation}

\end{thm}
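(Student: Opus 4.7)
The plan is to replicate the sixth-order strategy of Lemmas \ref{lemma transformation properties of F_(6),1}--\ref{lemma integral representation of R_(6)} and Theorem \ref{theorem transformation properties of H_(6),1, H_(6),2}, adapted to the second order functions $A$, $B$, $\mu$.

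First I would establish the modular transformation laws of $F_{(2)}$ under the generators $T$ and $S$. The $T$-transformation is immediate from the prefactors $q^{-1/16}$, $q^{1/4}$, $q^{-1/16}$ together with the sign flip $q^{1/2}\mapsto -q^{1/2}$, which produces exactly the stated off-diagonal matrix. For $S$ I would apply the known transformation formulas of $A$, $B$ and $\mu$ recorded in \cite{gordon2012survey} with the substitutions $\alpha=\pi/\tau$ and $\alpha=\pi/(2\tau)$, obtaining an identity
\[ \tfrac{1}{\sqrt{-i\tau}}\, F_{(2)}(-1/\tau) \;=\; M_{(2)}\, F_{(2)}(\tau) + R_{(2)}(\tau), \]
where $M_{(2)}$ denotes the permutation matrix in the theorem statement and $R_{(2)}(\tau)$ is a six-component vector of Mordell-type integrals.

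Next I would show that $R_{(2)}$ admits a representation as a contour integral of the theta vector $g_{(2)}$. Following the argument of Lemma \ref{lemma integral representation of R_(6)} one component at a time, I would insert the partial fraction expansion of $1/\cosh(\pi y)$ into each Mordell integral and apply Zwegers's identity (Lemma 1.18 of \cite{zwegers2001mock})
\[ \int_{-\infty}^{\infty} \frac{e^{-\pi t y^{2}}}{y-ir}\,dy \;=\; \pi i r \int_{0}^{\infty} \frac{e^{-\pi r^{2} u}}{\sqrt{u+t}}\,du, \]
then collect the resulting gaussian sums into the linear combinations of $\theta_{4,a}$ prescribed by the definition of $g_{(2)}$. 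The intermediate identity is first proved for $\tau=it$ with $t>0$ and extended to $\tau\in\mathbb{H}$ by the identity theorem. The outcome is
\[ R_{(2)}(\tau) \;=\; \frac{i^{3/2}}{\sqrt{2}} \int_{0}^{i\infty} \frac{g_{(2)}(z)}{\sqrt{-i(z\tau-1)}}\,dz. \]

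Third, I would transfer these transformations onto $G_{(2)}$. Applying (\ref{transformation of theta_N,a under T}) and (\ref{transformation of theta_N,a under S}) to each $\theta_{4,a}$, a short linear-algebra check yields
\[ g_{(2)}(z-1) \;=\; N_{(2)}\, g_{(2)}(z) \qquad \text{and} \qquad g_{(2)}(-1/z) \;=\; (-iz)^{3/2}\,(-M_{(2)})\, g_{(2)}(z), \]
where $N_{(2)}$ denotes the $T$-matrix of the theorem. Substituting into the integral defining $G_{(2)}$ and changing variables ($u=z-1$ for $T$, $u=-1/z$ for $S$) then gives $G_{(2)}(\tau+1) = N_{(2)}\, G_{(2)}(\tau)$ as well as
\[ \tfrac{1}{\sqrt{-i\tau}}\, G_{(2)}(-1/\tau) - M_{(2)} G_{(2)}(\tau) \;=\; R_{(2)}(\tau). \]

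Subtracting the two transformation laws proves the announced formulas for $H_{(2)} = F_{(2)} - G_{(2)}$ under the generators $T$ and $S$ of $\text{Mp}_{2}(\mathbb{Z})$. The three conditions defining a harmonic weak Maass form are then routine: the transformation law is what we just established; linear exponential growth at the cusps holds because $F_{(2)}$ is a $q$-series of bounded pole order while $G_{(2)}$ is bounded in Siegel domains; and $\Delta_{1/2} H_{(2)} = 0$ because $F_{(2)}$ is holomorphic while differentiating $G_{(2)}$ under the integral sign using the holomorphicity of $g_{(2)}$ shows $\Delta_{1/2} G_{(2)} = 0$.

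The main obstacle is the second step: the component-by-component partial fraction manipulation and the precise matching of the resulting gaussian sums with the combinations of $\theta_{4,a}$ that define $g_{(2)}$ require careful bookkeeping of signs and constants. This is, however, a direct analogue of the computation carried out in detail for the order $6$ case, which justifies the author's decision to omit it.
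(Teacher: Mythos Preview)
Your proposal is correct and follows exactly the approach the paper intends: the authors explicitly state that all results of the order~$2$ subsection ``can be proven analogously to the previous subsection,'' and your outline reproduces the order~$6$ template (Lemmas~\ref{lemma transformation properties of F_(6),1}--\ref{lemma integral representation of R_(6)} and Theorem~\ref{theorem transformation properties of H_(6),1, H_(6),2}) step for step. The only minor point is that the Gordon--McIntosh substitutions should carry a factor of $i$ (e.g.\ $\alpha=\pi i/\tau$ rather than $\alpha=\pi/\tau$), matching the convention used in the order~$6$ proof.
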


\begin{cor} \label{corollary xi-image order 2}
We have $\xi_{1/2}(H_{(2)})(\tau)=-g_{(2)}(\tau)$.
\end{cor}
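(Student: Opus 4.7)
The plan is a direct computation from the definition of $\xi_{1/2}$, entirely parallel to the order $6$ case treated earlier (Corollary \ref{corollary xi-image order 6}). Since $F_{(2)}(\tau)$ is holomorphic on $\mathbb{H}$, it is annihilated by $\partial_{\bar\tau}$, so $\xi_{1/2}(H_{(2)})=-\xi_{1/2}(G_{(2)})$, and everything reduces to computing the $\bar\tau$-derivative of the theta integral $G_{(2)}$.

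First, I would differentiate $G_{(2)}$ with respect to $\bar\tau$. The integrand $g_{(2)}(z)/\sqrt{-i(z+\tau)}$ depends only holomorphically on $\tau$, so by the Leibniz rule only the $\bar\tau$-dependent lower limit $z=-\bar\tau$ contributes. Using $\tau-\bar\tau=2iv$, this gives
$$\partial_{\bar\tau} G_{(2)}(\tau) \;=\; \frac{i}{\sqrt{2}}\cdot\frac{g_{(2)}(-\bar\tau)}{\sqrt{-i(\tau-\bar\tau)}} \;=\; \frac{i\,g_{(2)}(-\bar\tau)}{2\sqrt{v}}.$$

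Next, applying $\xi_{1/2}(f)(\tau)=2iv^{1/2}\overline{\partial_{\bar\tau}f(\tau)}$ and collecting the constants produces
$$\xi_{1/2}(H_{(2)})(\tau) \;=\; -2iv^{1/2}\cdot\overline{\frac{i\,g_{(2)}(-\bar\tau)}{2\sqrt{v}}} \;=\; -\overline{g_{(2)}(-\bar\tau)}.$$
Finally, every component of $g_{(2)}$ is a $\mathbb{Z}$-linear combination of the unary theta functions $\theta_{4,a}(z)=\sum_{n\equiv a\,(8)} n\,e^{2\pi i z n^{2}/16}$, whose Fourier coefficients are real. A direct calculation from the $q$-series then shows $\overline{\theta_{4,a}(-\bar\tau)}=\theta_{4,a}(\tau)$, because $\overline{e^{2\pi i(-\bar\tau)n^{2}/16}}=e^{2\pi i\tau n^{2}/16}$. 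Hence $\overline{g_{(2)}(-\bar\tau)}=g_{(2)}(\tau)$ and the stated identity follows.

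There is no real obstacle in this argument: it is the standard computation for the $\xi$-operator applied to a Mordell–Zwegers–type Eichler integral with a vector-valued theta kernel. The only point that requires some care is the sign bookkeeping, since three independent sign contributions enter the calculation (the minus coming from $\partial_{\bar\tau}(-\bar\tau)=-1$, the conjugation of the prefactor $i$, and the $i$ built into $\xi_{1/2}$). These combine to produce exactly the $-1$ on the right-hand side.
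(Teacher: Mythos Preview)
Your proof is correct and is exactly the standard computation the paper has in mind; the paper states this corollary without proof (noting that the order $2$ results are proven analogously to order $6$), and the direct application of $\xi_{1/2}$ to the Eichler integral $G_{(2)}$ that you carry out is precisely that argument. The sign and constant bookkeeping is right: the prefactor $\tfrac{i}{\sqrt{2}}$ combines with the $\sqrt{2v}$ from $\sqrt{-i(\tau-\bar\tau)}$ and the $2iv^{1/2}$ in $\xi_{1/2}$ to give the claimed factor $-1$.
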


After we have constructed a vector valued harmonic weak Maass form that contains mock theta functions of order $2$, we again take a closer look at its components. We consider
$$\Gamma(2) = \left\{\left(\begin{matrix}
  a & b \\
  c & d \\
\end{matrix}\right) \in \text{SL}_{2}(\mathbb{Z}) \ \Big| \ b \equiv c \equiv 0 \ (2), \ a \equiv d \equiv 1 \ (2) \right\},$$

the principal congruence subgroup of level $2$, and obtain the following result:

\begin{thm} \label{theorem order 2 scalar valued harmonic Maass forms}
The components of the vector valued harmonic weak Maass form $H_{(2)}$ are scalar valued harmonic weak Maass forms of weight $\nicefrac{1}{2}$ for the subgroup
$$\{(\gamma,\phi) \in \emph{\text{Mp}}_{2}(\mathbb{Z}) \ | \ \gamma \in \Gamma(2) \}$$

of the metaplectic group $\emph{\text{Mp}}_{2}(\mathbb{Z})$.
\end{thm}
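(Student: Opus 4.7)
The plan is to follow the exact same strategy as in the proof of Theorem \ref{theorem order 6 scalar valued harmonic Maass forms}. First I would fix a finite generating set for $\Gamma(2)$, lift each generator to a word in $S$ and $T$ inside $\text{Mp}_{2}(\mathbb{Z})$, and then multiply together the corresponding $6\times 6$ transformation matrices given in the preceding theorem. Showing that each such product is a diagonal matrix is exactly what is needed to conclude that each component $h_j$ of $H_{(2)}$ is a scalar valued harmonic weak Maass form for $\{(\gamma,\phi)\in\text{Mp}_{2}(\mathbb{Z})\mid\gamma\in\Gamma(2)\}$ with some unitary character.

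The key structural observation underlying this strategy is that, ignoring the scalar factors, the $T$-matrix appearing in the preceding theorem realises the permutation $(1,2)(3,4)(5,6)$ on the six components, while the $S$-matrix realises $(1,6)(2,4)(3,5)$. These two involutions generate a copy of $S_3$ inside $S_6$, and one checks that the associated permutation representation of $\text{Mp}_{2}(\mathbb{Z})$ factors through $\SL_{2}(\mathbb{Z})/\Gamma(2)\cong \SL_{2}(\mathbb{F}_{2})\cong S_3$. In particular, every element of $\Gamma(2)$ fixes each index, so the transformation matrix associated to any element of $\Gamma(2)$ is automatically diagonal.

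Concretely, I would take generators such as $-I$, $T^2$ and $\bigl(\begin{smallmatrix}1&0\\2&1\end{smallmatrix}\bigr)$ (or an equivalent presentation, as in the $\Gamma(6)$ case), lift them to $\text{Mp}_{2}(\mathbb{Z})$, and read off the diagonal entries --- each of which is a root of unity --- from the corresponding matrix products. The remaining two conditions in the definition of a harmonic weak Maass form, namely annihilation by $\Delta_{1/2}$ and at most linear exponential growth at each cusp of $\Gamma(2)$, descend componentwise from $H_{(2)}$, which is already a harmonic weak Maass form for $\text{Mp}_{2}(\mathbb{Z})$ by the preceding theorem.

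The main obstacle is entirely computational: producing an explicit generating system of $\Gamma(2)$ as words in $S$ and $T$ and then verifying by direct matrix multiplication that each of the six diagonal entries has the predicted form. Since the permutation-theoretic argument guarantees diagonality a priori, no conceptual difficulty remains, and as in the $\Gamma(6)$ case the bookkeeping is most efficiently delegated to Sage.
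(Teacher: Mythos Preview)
Your proposal is correct and follows essentially the same approach as the paper, which simply states that the order $2$ case is proven analogously to the order $6$ case (i.e., fix generators of $\Gamma(2)$, express them as words in $S$ and $T$, and verify by matrix multiplication that the resulting transformation matrices are diagonal). Your permutation-theoretic observation that the underlying $S_6$-action factors through $\SL_2(\mathbb{F}_2)\cong S_3$ is a nice conceptual shortcut the paper does not make explicit, but the overall strategy is the same.
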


So we have interpreted all second order mock theta functions as the holomorphic part of a scalar valued harmonic weak Maass form.

\begin{rem}
As in the previous section, the $\xi$-images of the harmonic weak Maass forms in Theorem \ref{theorem order 2 scalar valued harmonic Maass forms} follow immediately from Corollary \ref{corollary xi-image order 2}.
\end{rem}

The shadows of the second order mock theta functions were also computed by Kang and Swisher in \cite{kang2017mock}.

\subsection{Mock Theta Functions of order $3$} \label{section mock theta functions of order 3}

We now turn to the mock theta functions $\phi$, $\psi$ and $\nu$ of order $3$. As before, we omit proofs in this subsection.

\begin{defin}
For $\tau \in \mathbb{H}$ we define the vector valued functions
$$F_{(3)}(\tau) := \left(\begin{matrix}
  q^{-\frac{1}{48}} \ \phi(q^{\frac{1}{2}}) \\
  q^{-\frac{1}{48}} \ \phi(-q^{\frac{1}{2}}) \\
  2 \ q^{-\frac{1}{48}} \ \psi(q^{\frac{1}{2}}) \\
  2 \ q^{-\frac{1}{48}} \ \psi(-q^{\frac{1}{2}}) \\
  \sqrt{2} \ q^{\frac{1}{6}} \ \nu(q^{\frac{1}{2}}) \\
  \sqrt{2} \ q^{\frac{1}{6}} \ \nu(-q^{\frac{1}{2}}) 
\end{matrix}\right),$$

where $q=e^{2 \pi i \tau}$, and
$$G_{(3)}(\tau) := \frac{i}{\sqrt{24}} \ \int_{-\overline{\tau}}^{i \infty} \frac{g_{(3)}(z)}{\sqrt{-i(z+\tau)}} \ dz,$$

where $g_{(3)}$ is the vector $(g_{(3),0},\dots,g_{(3),5})^{T}$ with components
\begin{flalign*}
g_{(3),0}(z) &:= -(\theta_{12,1}(z)+\theta_{12,5}(z)+\theta_{12,7}(z)+\theta_{12,11}(z)), \\
g_{(3),1}(z) &:= -(\theta_{12,1}(z)-\theta_{12,5}(z)+\theta_{12,7}(z)-\theta_{12,11}(z)), \\
g_{(3),2}(z) &:= \theta_{12,1}(z)+\theta_{12,5}(z)+\theta_{12,7}(z)+\theta_{12,11}(z), \\
g_{(3),3}(z) &:= \theta_{12,1}(z)-\theta_{12,5}(z)+\theta_{12,7}(z)-\theta_{12,11}(z), \\
g_{(3),4}(z) &:= -\sqrt{2} \ (\theta_{12,4}(z)+\theta_{12,8}(z)), \\
g_{(3),5}(z) &:= \sqrt{2} \ (\theta_{12,4}(z)+\theta_{12,8}(z)).
\end{flalign*}
\end{defin}

Since these two functions have the same modular tranformation properties we find for the function $F_{(3)}-G_{(3)}$:

\begin{thm} \label{theorem transformation properties of H_(3)}
The function $H_{(3)}$, defined for $\tau \in \mathbb{H}$ by
$$H_{(3)}(\tau):=F_{(3)}(\tau)-G_{(3)}(\tau),$$

is a vector valued harmonic weak Maass form of weight $\nicefrac{1}{2}$ for the metaplectic group $\emph{\text{Mp}}_{2}(\mathbb{Z})$. \\
For $\tau \in \mathbb{H}$ we have
\begin{equation}
H_{(3)}(\tau+1) = \left(\begin{matrix}
  0 & \zeta_{48}^{-1} & 0 & 0 & 0 & 0 \\
  \zeta_{48}^{-1} & 0 & 0 & 0 & 0 & 0 \\
  0 & 0 & 0 & \zeta_{48}^{-1} & 0 & 0 \\
  0 & 0 & \zeta_{48}^{-1} & 0 & 0 & 0 \\
  0 & 0 & 0 & 0 & 0 & \zeta_{6} \\
  0 & 0 & 0 & 0 & \zeta_{6} & 0
\end{matrix}\right) \ H_{(3)}(\tau)
\end{equation}

and
\begin{equation}
 H_{(3)} \left(-\frac{1}{\tau} \right) = \sqrt{-i \tau} \ \left(\begin{matrix}
  0 & 0 & 1 & 0 & 0 & 0 \\
  0 & 0 & 0 & 0 & 0 & 1 \\
  1 & 0 & 0 & 0 & 0 & 0 \\
  0 & 0 & 0 & 0 & 1 & 0 \\
  0 & 0 & 0 & 1 & 0 & 0 \\
  0 & 1 & 0 & 0 & 0 & 0
\end{matrix}\right) \ H_{(3)}(\tau).
\end{equation}

\end{thm}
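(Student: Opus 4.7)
The plan is to follow exactly the strategy of Theorem \ref{theorem transformation properties of H_(6),1, H_(6),2}, replacing the sixth order functions by the third order functions $\phi,\psi,\nu$. First I would verify the $T$-transformation of $F_{(3)}$ by direct substitution: under $\tau\mapsto\tau+1$ one has $q^{1/2}\mapsto -q^{1/2}$, so the two entries $\phi(\pm q^{1/2})$ get swapped (and similarly for $\psi$ and $\nu$), while the $q$-prefactors contribute the roots of unity $\zeta_{48}^{-1}$ and $\zeta_{6}$ that appear on the diagonal blocks of the stated $T$-matrix.

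Next, I would derive the $S$-transformation of $F_{(3)}$ by applying the classical modular transformation formulas for the third order mock theta functions $\phi,\psi,\nu$ listed in \cite{gordon2012survey}, with the parameter $\alpha$ chosen as $3\pi i/(2\tau)$ for the arguments involving $q^{1/2}$ and as $3\pi i/\tau$ for $-q^{1/2}$, matching the exponents in the theta prefactors. This will produce an identity of the form
$$\frac{1}{\sqrt{-i\tau}}\,F_{(3)}\!\left(-\frac{1}{\tau}\right)=M_{(3)}\,F_{(3)}(\tau)+R_{(3)}(\tau),$$
where $M_{(3)}$ is the permutation matrix appearing in the statement and $R_{(3)}(\tau)$ is an explicit linear combination of Mordell-type integrals analogous to $J,J_{1},K,K_{1}$ of Lemma \ref{lemma transformation properties of F_(6),1}.

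The decisive step, and the main obstacle, is to recast $R_{(3)}(\tau)$ as the Eichler-type integral
$$R_{(3)}(\tau)=\frac{i^{3/2}}{\sqrt{24}}\int_{0}^{i\infty}\frac{g_{(3)}(z)}{\sqrt{-i(z\tau-1)}}\,dz,$$
in the spirit of Lemma \ref{lemma integral representation of R_(6)}. Specializing to $\tau=it$, I would use the partial fraction decomposition of $1/\cosh(\pi y)$ together with Zwegers' integral identity (\cite{zwegers2001mock}, Lemma 1.18) to rewrite each Mordell integral as a theta integral. Collecting the resulting Gaussian sums by residue classes modulo $24$ should exactly reproduce the prescribed linear combinations of $\theta_{12,a}$ that define the components $g_{(3),0},\ldots,g_{(3),5}$, and analytic continuation in $\tau$ extends the identity from the imaginary axis to all of $\mathbb{H}$. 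The genuine difficulty lies in the careful bookkeeping of signs, square roots, and residue classes needed to match the six components on the nose.

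Granted this integral representation, the transformation laws of $G_{(3)}$ under $T$ and $S$ follow from \eqref{transformation of theta_N,a under T} and \eqref{transformation of theta_N,a under S} by a shift and a contour-flip of the integration variable, yielding $G_{(3)}(\tau+1)=N_{(3)}\,G_{(3)}(\tau)$ (with $N_{(3)}$ the stated $T$-matrix) and
$$\frac{1}{\sqrt{-i\tau}}\,G_{(3)}\!\left(-\frac{1}{\tau}\right)-M_{(3)}\,G_{(3)}(\tau)=R_{(3)}(\tau).$$
Subtracting from the analogous identity for $F_{(3)}$ causes the obstruction $R_{(3)}$ to cancel, which proves the asserted transformation behaviour of $H_{(3)}=F_{(3)}-G_{(3)}$. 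Harmonicity is then immediate: $F_{(3)}$ is holomorphic, and the Eichler integral $G_{(3)}$ satisfies $\xi_{1/2}G_{(3)}=-\tfrac{1}{\sqrt{24}}\,g_{(3)}$, a holomorphic vector valued modular form of weight $3/2$, so $\Delta_{1/2}H_{(3)}=0$. Finally, the linear exponential growth at the cusps is inherited from the $q$-expansions of $\phi,\psi,\nu$ together with the standard estimate $G_{(3)}(\tau)=O(v^{-1/2})$ as $v\to\infty$.
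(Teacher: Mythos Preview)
Your proposal is correct and follows essentially the same approach as the paper: the paper explicitly omits the proof for order~$3$ and states that it is analogous to the order~$6$ case, and your outline reproduces that argument step by step (transformation of $F_{(3)}$ via the formulas in \cite{gordon2012survey}, rewriting the Mordell-type error $R_{(3)}$ as an Eichler integral over $g_{(3)}$ using the $\cosh$ partial fraction expansion and Zwegers' Lemma~1.18, matching transformation of $G_{(3)}$ via \eqref{transformation of theta_N,a under T}--\eqref{transformation of theta_N,a under S}, and subtraction). One small slip: the constant in your $\xi_{1/2}$-computation is off by a factor of $\sqrt{2}$ (compare Corollary~\ref{corollary xi-image order 3}, where $\xi_{1/2}H_{(3)}=-\tfrac{1}{\sqrt{12}}\,g_{(3)}$), but this does not affect the harmonicity argument.
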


\begin{cor} \label{corollary xi-image order 3}
We have $\xi_{1/2}(H_{(3)})(\tau)=-\frac{1}{\sqrt{12}} \ g_{(3)}(\tau)$.
\end{cor}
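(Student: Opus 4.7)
The plan is to exploit that $H_{(3)} = F_{(3)} - G_{(3)}$ where $F_{(3)}$ is manifestly holomorphic (its components are series in $q$ times $q^{\alpha}$), so $\partial_{\overline{\tau}} F_{(3)} = 0$ and therefore $\xi_{1/2}(H_{(3)}) = -\xi_{1/2}(G_{(3)})$. Everything then reduces to a direct differentiation of the Eichler-type integral defining $G_{(3)}$, entirely analogous to the computation producing Corollary \ref{corollary xi-image order 6}.

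First I would compute $\partial_{\overline{\tau}} G_{(3)}(\tau)$. In the integral
$$G_{(3)}(\tau) = \frac{i}{\sqrt{24}}\int_{-\overline{\tau}}^{i\infty} \frac{g_{(3)}(z)}{\sqrt{-i(z+\tau)}}\, dz,$$
the variable $\overline{\tau}$ appears only in the lower limit of integration, while the integrand depends holomorphically on $\tau$. Applying the fundamental theorem of calculus and the chain rule with $\partial_{\overline{\tau}}(-\overline{\tau}) = -1$ gives
$$\frac{\partial}{\partial \overline{\tau}} G_{(3)}(\tau) = \frac{i}{\sqrt{24}} \cdot \frac{g_{(3)}(-\overline{\tau})}{\sqrt{-i(-\overline{\tau} + \tau)}} = \frac{i}{\sqrt{24}} \cdot \frac{g_{(3)}(-\overline{\tau})}{\sqrt{2v}},$$
since $\tau - \overline{\tau} = 2iv$ gives $-i(\tau-\overline{\tau}) = 2v$.

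Next I would apply the definition $\xi_{1/2}(G_{(3)})(\tau) = 2iv^{1/2}\,\overline{\partial_{\overline{\tau}} G_{(3)}(\tau)}$. This produces a factor of $-i$ from conjugating the leading $i$, while $\sqrt{2v}$ is real and positive. The key identity is that each component $g_{(3),j}$ is a $\mathbb{Z}$-linear combination of the unary theta functions $\theta_{12,a}$, which have real Fourier coefficients; consequently $\overline{g_{(3)}(w)} = g_{(3)}(-\overline{w})$ as functions on $\mathbb{H}$, and setting $w = -\overline{\tau}$ yields $\overline{g_{(3)}(-\overline{\tau})} = g_{(3)}(\tau)$. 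Combining the factors gives
$$\xi_{1/2}(G_{(3)})(\tau) = 2iv^{1/2} \cdot \frac{-i}{\sqrt{24}} \cdot \frac{g_{(3)}(\tau)}{\sqrt{2v}} = \frac{2}{\sqrt{48}}\, g_{(3)}(\tau) = \frac{1}{\sqrt{12}}\, g_{(3)}(\tau),$$
and hence $\xi_{1/2}(H_{(3)})(\tau) = -\frac{1}{\sqrt{12}} g_{(3)}(\tau)$ as claimed.

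There is no real obstacle here; the argument is bookkeeping. The only place demanding care is the simultaneous tracking of the two complex conjugations (the outer one in the definition of $\xi_k$ and the reality property of the theta coefficients), together with the signs produced by differentiating an integral with respect to its lower endpoint. Since the structure of $G_{(3)}$ mirrors that of $G_{(6)}$, the same computation that produced Corollary \ref{corollary xi-image order 6} goes through verbatim after substituting the vector $g_{(3)}$ for $g_{(6)}$; one could alternatively simply cite that corollary's proof and record that it applies identically to the order $3$ setting.
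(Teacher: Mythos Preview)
Your proof is correct and is exactly the routine computation the paper has in mind; the paper gives no proof of this corollary (the order~3 subsection explicitly omits all proofs as parallel to the order~6 case, and Corollary~\ref{corollary xi-image order 6} itself is stated without proof), so there is nothing to compare beyond noting that you have filled in the expected details. One minor inaccuracy: the components $g_{(3),4}$ and $g_{(3),5}$ carry a factor of $\sqrt{2}$, so the $g_{(3),j}$ are real-linear rather than $\mathbb{Z}$-linear combinations of the $\theta_{12,a}$; this is harmless since your conjugation identity $\overline{g_{(3)}(w)}=g_{(3)}(-\overline{w})$ only requires the coefficients to be real.
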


We now want to complete the mock theta functions $\phi,\psi$ and $\nu$ to scalar valued harmonic weak Maass forms. We again consider the group $\Gamma(2)$ and obtain:

\begin{thm} \label{theorem order 3 scalar valued harmonic Maass forms}
The components of the vector valued harmonic weak Maass form $H_{(3)}$ are scalar valued harmonic weak Maass forms of weight $\nicefrac{1}{2}$ for the subgroup
$$\{(\gamma,\phi) \in \emph{\text{Mp}}_{2}(\mathbb{Z}) \ | \ \gamma \in \Gamma(2) \}$$

of the metaplectic group $\emph{\text{Mp}}_{2}(\mathbb{Z})$.
\end{thm}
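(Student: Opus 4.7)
\emph{Proof proposal.} The argument will parallel verbatim the strategy used for the order~$6$ case in Theorem~\ref{theorem order 6 scalar valued harmonic Maass forms}. Real analyticity, annihilation by $\Delta_{1/2}$ and linear exponential growth at all cusps are inherited component-wise from the corresponding properties of $H_{(3)}$ established in Theorem~\ref{theorem transformation properties of H_(3)}. The only substantive point to verify is the transformation law, i.e.\ that under the representation $\rho_{(3)}$ of $\text{Mp}_2(\mathbb{Z})$ defined by the two $6\times 6$ matrices in Theorem~\ref{theorem transformation properties of H_(3)}, every element of the preimage of $\Gamma(2)$ in $\text{Mp}_2(\mathbb{Z})$ is sent to a diagonal matrix. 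Once this is shown, each component $h_{(3),j}$ transforms under $(\gamma,\phi)$ by $\phi(\tau)\chi_j(\gamma,\phi)$ for a one-dimensional character $\chi_j$, which is exactly the claim.

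To execute this I would fix the standard generating set $\{T^2,\,V:=ST^{-2}S^{-1},\,-I\}$ of $\Gamma(2)\subset\text{SL}_2(\mathbb{Z})$, lift each element to $\text{Mp}_2(\mathbb{Z})$ via a consistent choice of square root, and adjoin the kernel element $(I,-1)$ of the metaplectic cover. The $T$-matrix in Theorem~\ref{theorem transformation properties of H_(3)} decomposes into three $2\times 2$ anti-diagonal blocks with off-diagonal entries $\zeta_{48}^{-1},\zeta_{48}^{-1}$ and $\zeta_6$, so $\rho_{(3)}(T)^2$ is manifestly diagonal, with entries $\zeta_{24}^{-1}$ in the first four slots and $\zeta_3$ in the last two. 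The $S$-matrix is, apart from the $\sqrt{-i\tau}$ automorphy factor, the involutive permutation matrix realising the product of transpositions $(0\,2)(1\,5)(3\,4)$ of the six components, so conjugating the diagonal $\rho_{(3)}(T)^{-2}$ by it only permutes its diagonal entries; consequently $\rho_{(3)}(V)$ is again diagonal. The remaining central generators $-I$ and $(I,-1)$ contribute only scalar multiples of the identity via the metaplectic cocycle, so they are trivially diagonal.

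Since every generator maps to a diagonal matrix, diagonality propagates to the entire preimage of $\Gamma(2)$ under $\rho_{(3)}$, and the transformation law for each component follows by reading off the corresponding diagonal entry as the character $\chi_j$. The main bookkeeping obstacle --- and the only place where an error could realistically creep in --- is the careful tracking of the $\sqrt{-i\tau}$ cocycle when $S$ and $T$ are combined into longer words: the branch of the square root must compose correctly to reproduce the automorphy factor $\phi(\tau)^{2k}=\phi(\tau)$ at weight $\nicefrac{1}{2}$, and the resulting phase must be attributed to $\chi_j$ rather than to the matrix. This is precisely the step the authors say was handled by Sage in the order~$6$ case, and the identical approach --- writing each of the three generators as a short word in $S$ and $T$ and multiplying the $6\times 6$ matrices, either by computer algebra or by hand --- carries through here without modification.
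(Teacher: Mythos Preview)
Your proposal is correct and follows essentially the same approach as the paper: write generators of $\Gamma(2)$ as words in $S$ and $T$, multiply the corresponding representation matrices from Theorem~\ref{theorem transformation properties of H_(3)}, and verify that they are all diagonal. The only difference is cosmetic --- you carry out the short computation by hand (exploiting that $M_{(3)}$ is a permutation matrix and $N_{(3)}^2$ is diagonal), whereas the paper defers the analogous check to Sage, as it did in the order~$6$ case.
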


Thus we have related the mock theta functions $\phi,\psi$ and $\nu$ to scalar valued harmonic weak Maass forms.

\begin{rem}
Again we get the $\xi$-images of the harmonic weak Maass forms in Theorem \ref{theorem order 3 scalar valued harmonic Maass forms} from Corollary \ref{corollary xi-image order 3}.
\end{rem}
The mock theta functions $f$ and $\omega$ of order $3$  have already been treated by Zwegers in \cite{zwegers2001mock}, and Bruinier and Schwagenscheidt in \cite{bruinier2017algebraic} and we state their results for completeness.
\begin{thm}[\cite{zwegers2001mock}, Theorem 3.6] 
The vector $$F_3(\tau)=\begin{pmatrix}
q^{-\frac{1}{24}}f(q)\\2\ q^{\frac{1}{3}}\ \omega(q^{\frac{1}{2}})\\2 \ q^{\frac{1}{3}}\ \omega(-q^{\frac{1}{2}})
\end{pmatrix} $$ is the holomorphic part of a harmonic weak Maass form $H_{3}=(h_0,h_1,h_2)^T\in H^+_{1/2}$ of weight $\nicefrac{1}{2}$, transforming as 
\begin{align*}
H_3(\tau+1)=\begin{pmatrix}
\zeta_{24}^{-1}&0&0\\0&0&\zeta_3\\0&\zeta_3&0
\end{pmatrix} \ H_3(\tau)
\end{align*}
and
\begin{align*}
H_3\bigg(-\frac{1}{\tau}\bigg)=\sqrt{-i \tau} \ \begin{pmatrix}
0&1&0\\1&0&0\\0&0&-1
\end{pmatrix} \ H_3(\tau).
\end{align*}
\end{thm}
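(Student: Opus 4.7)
The argument parallels the order-$6$ case treated in detail in Lemmas~\ref{lemma transformation properties of F_(6),1}--\ref{lemma integral representation of R_(6)} and Theorem~\ref{theorem transformation properties of H_(6),1, H_(6),2}, only now working with unary theta functions of level $6$, since the $q$-powers appearing in the three components of $F_3$ have denominator $24$ and the associated quadratic form is $n \mapsto 6n^2$ on $\Z$ with discriminant group $\Z/12\Z$.

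First I would check the $T$-transformation of $F_3$. This is immediate from the $q$-expansions of $f(q)$ and $\omega(\pm q^{1/2})$: replacing $\tau$ by $\tau+1$ multiplies the first component by $\zeta_{24}^{-1}$ and swaps the second and third components with a factor $\zeta_3$, which is exactly the claimed matrix. For the $S$-transformation I would invoke Watson's classical functional equations for $f$ and $\omega$ (recorded, e.g., in \cite{gordon2012survey}): specializing the parameter $\alpha$ to $-\pi i/\tau$ in the formula for $f$ and to $-2\pi i/\tau$ in the formulas for $\omega(\pm q^{1/2})$ yields the permutation matrix $\bigl(\begin{smallmatrix}0&1&0\\1&0&0\\0&0&-1\end{smallmatrix}\bigr)$ acting on $F_3$, up to an explicit error term $R_3(\tau)$ built from Mordell-type integrals of the form $\int_0^\infty e^{-\alpha x^2}/\cosh(\beta x)\,dx$.

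Next I would convert $R_3(\tau)$ into a period integral of unary theta series, following the pattern of Lemma~\ref{lemma integral representation of R_(6)}. The two tools are the partial fraction decomposition
\begin{equation*}
\frac{1}{\cosh(\pi y)} \;=\; -\frac{i}{\pi}\sum_{n\in\Z}\frac{1}{y-i(2n+\tfrac{1}{2})} - \frac{i}{\pi}\sum_{n\in\Z}\frac{1}{-y-i(2n+\tfrac{1}{2})}
\end{equation*}
together with the Lipschitz-type identity
\begin{equation*}
\int_{-\infty}^\infty \frac{e^{-\pi t y^2}}{y-ir}\,dy \;=\; \pi i r\int_0^\infty \frac{e^{-\pi r^2 u}}{\sqrt{u+t}}\,du,
\end{equation*}
valid for $r\in\R\setminus\{0\}$ and $t>0$. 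Applied componentwise these rewrite each Mordell integral as $\int_0^{i\infty} g_{3,j}(z)/\sqrt{-i(z\tau-1)}\,dz$ for an explicit linear combination $g_{3,j}$ of the theta components $\theta_{6,a}$. One then \emph{defines}
\begin{equation*}
G_3(\tau) \;:=\; \frac{i}{\sqrt{c}}\int_{-\overline{\tau}}^{i\infty}\frac{g_3(z)}{\sqrt{-i(z+\tau)}}\,dz
\end{equation*}
with the appropriate constant $c$ and vector $g_3=(g_{3,0},g_{3,1},g_{3,2})^T$, and checks, exactly as in the proof following Lemma~\ref{lemma integral representation of R_(6)}, that $G_3$ satisfies the same $T$-transformation as $F_3$ (using \eqref{transformation of theta_N,a under T} on $z$) and that $G_3(-1/\tau)/\sqrt{-i\tau} - M_3\,G_3(\tau) = R_3(\tau)$ (using \eqref{transformation of theta_N,a under S} and an integral substitution).

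Finally, set $H_3 := F_3 - G_3$. By construction its transformation matrices under $T$ and $S$ are precisely those stated, and $H_3$ lies in the kernel of $\Delta_{1/2}$ because $F_3$ is holomorphic while $G_3$ satisfies $\xi_{1/2}(G_3) = -g_3/\sqrt{c}$ (a holomorphic theta series, hence annihilated after applying $\overline{\partial/\partial\overline{\tau}}$ and then $\xi_{1/2}$ yields a weight-$3/2$ holomorphic form, so $\Delta_{1/2}G_3 = 0$). Linear exponential growth at the cusp is clear from the explicit formulas. The main obstacle will be step three: correctly identifying which integer linear combinations of $\theta_{6,a}$ reproduce each Mordell integral in $R_3(\tau)$, since this requires partitioning the sum over $n\in\Z$ according to residues modulo $6$ and tracking the resulting signs; once these identifications are in place, everything else is bookkeeping modeled on the order-$6$ proof above.
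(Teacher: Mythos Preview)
Your proposal is correct and follows exactly the template of Zwegers' original argument in \cite{zwegers2001mock}, which is also the model the present paper adopts for the order-$6$ case. Note, however, that the paper does not give its own proof of this statement at all: it is quoted verbatim as \cite[Theorem~3.6]{zwegers2001mock} ``for completeness,'' so there is no in-paper proof to compare against beyond the structural parallel you have already identified.
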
 
This result can be used to construct a harmonic weak Maass form that transforms with respect to the dual Weil representation.
\begin{lemma}
The function $$\widetilde{H}_3=h_0 \ [\mathfrak{e}_1-\mathfrak{e}_5+\mathfrak{e}_7-\mathfrak{e}_{11}]+(h_2-h_1) \ [\mathfrak{e}_2-\mathfrak{e}_{10}]+(h_1+h_2) \ [-\mathfrak{e}_4+\mathfrak{e}_8] $$ transforms like a vector valued modular form of weight $\nicefrac{1}{2}$ with respect to the dual Weil representation $\overline{\rho}_L$ of level $N=6$.
\end{lemma}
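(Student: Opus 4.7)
The plan is to verify directly that $\widetilde{H}_3$ satisfies the transformation law for $\overline{\rho}_L$ on the generators $T$ and $S$ of $\text{Mp}_2(\mathbb{Z})$. It will be convenient to fix the three antisymmetric vectors
$$v_1 := \mathfrak{e}_1 - \mathfrak{e}_5 + \mathfrak{e}_7 - \mathfrak{e}_{11}, \qquad v_2 := \mathfrak{e}_2 - \mathfrak{e}_{10}, \qquad v_3 := -\mathfrak{e}_4 + \mathfrak{e}_8,$$
so that $\widetilde{H}_3 = h_0 v_1 + (h_2 - h_1) v_2 + (h_1 + h_2) v_3$. All three lie in the $\overline{\rho}_L$-invariant antisymmetric subspace of $\mathbb{C}[\mathbb{Z}/12\mathbb{Z}]$ (invariant under $T$ because it acts diagonally, and under $S$ because the image of $\mathfrak{e}_r - \mathfrak{e}_{-r}$ involves only the odd sine kernels $\sin(\pi r r'/6)$).

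For $T$ the computation is immediate: $\overline{\rho}_L(T)$ multiplies $\mathfrak{e}_r$ by $e(-r^2/24)$. The indices of $v_1$ all satisfy $r^2 \equiv 1 \pmod{24}$, giving the phase $\zeta_{24}^{-1}$; those of $v_2$ give $e(-1/6) = \zeta_6^{-1}$, and those of $v_3$ give $e(-2/3) = \zeta_3$. Inserting the $T$-transformation of $H_3$, i.e. $h_0 \mapsto \zeta_{24}^{-1} h_0$, $h_1 \mapsto \zeta_3 h_2$ and $h_2 \mapsto \zeta_3 h_1$, into $\widetilde{H}_3(\tau+1)$ and invoking $\zeta_6^{-1} = -\zeta_3$ then yields the required identity.

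For $S$ the heart of the computation is to evaluate $\overline{\rho}_L(S)$ on each $v_j$ directly from $\overline{\rho}_L(S)\mathfrak{e}_r = \frac{e(1/8)}{\sqrt{12}}\sum_{r'(12)} e(rr'/12) \mathfrak{e}_{r'}$. Writing $a_r := \mathfrak{e}_r - \mathfrak{e}_{-r}$, the action on $a_r$ reduces to the finite sine sums $\frac{2ie(1/8)}{\sqrt{12}} \sum_{r'} \sin(\pi r r'/6) \mathfrak{e}_{r'}$; evaluating these for $r = 1, \dots, 5$ and observing the crucial cancellation of the $a_3$-contribution in all three relevant linear combinations gives
\begin{align*}
\overline{\rho}_L(S) v_1 &= ie(1/8)(v_2 - v_3), \\
\overline{\rho}_L(S) v_2 &= \tfrac{ie(1/8)}{2}(v_1 + v_2 + v_3), \\
\overline{\rho}_L(S) v_3 &= \tfrac{ie(1/8)}{2}(-v_1 + v_2 + v_3),
\end{align*}
so that $\mathrm{span}(v_1, v_2, v_3)$ is indeed $\overline{\rho}_L(S)$-invariant. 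Using the $S$-transformation of $H_3$ one rewrites $\widetilde{H}_3(-1/\tau) = \sqrt{-i\tau}\big[h_1 v_1 - (h_0 + h_2) v_2 + (h_0 - h_2) v_3\big]$ and then matches both sides coefficient by coefficient in $v_1, v_2, v_3$ by means of $\sqrt{-i\tau} = e(-1/8)\sqrt{\tau}$ and $-ie(1/8) = e(-1/8)$. The main obstacle is precisely this $S$-step: it is the cancellation of the $a_3$-terms that forces invariance, and once that is in place the identification becomes bookkeeping with the eighth roots of unity $e(\pm 1/8)$, completing the proof.
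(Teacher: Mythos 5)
Your verification is correct: the actions of $\overline{\rho}_L(T)$ and $\overline{\rho}_L(S)$ on $v_1,v_2,v_3$ that you state (including the cancellation of the $a_3$-component and the constants $ie(1/8)$, $\tfrac{ie(1/8)}{2}$), the rewriting $\widetilde{H}_3(-1/\tau)=\sqrt{-i\tau}\,[h_1v_1-(h_0+h_2)v_2+(h_0-h_2)v_3]$ coming from Zwegers' transformation of $H_3$, and the identities $\zeta_6^{-1}=-\zeta_3$, $\sqrt{-i\tau}=e(-1/8)\sqrt{\tau}$, $-ie(1/8)=e(-1/8)$ all check out. The paper itself omits the proof of this lemma, but your direct verification on the generators $T$ and $S$ is exactly the intended argument, the same strategy used for the order-6 analogue and in \cite{bruinier2017algebraic}.
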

Let $E_4$ denote the normalized Eisenstein series of weight $4$ for $\text{SL}_2(\mathbb{Z})$. We consider the function \begin{align}\label{e3}
e_{(3)}(z):=-\frac{1}{40}\ \frac{E_4(z)+4E_4(2z)-9E_4(3z)-36E_4(6z)}{(\eta(z)\eta(2z)\eta(3z)\eta(6z))^2}
\end{align} which is a weakly holomorphic modular form of weight $0$, level $6$ and whose principal part starts with $q^{-1}$.
\begin{thm}[\cite{bruinier2017algebraic}, Theorem 3.1]
Let $e_{(3)}\in M_0^!(6)$ be the function defined in (\ref{e3}).
\begin{itemize}
\item[(1)] For $n\geq1$ the coefficients $a_f(n)$ of $f(q)$ are given by $$a_f(q)=\frac{i}{2\sqrt{24n-1}}\big(\textup{tr}_{e_{(3)}}^+(1-24n,1)-\textup{tr}_{e_{(3)}}^-(1-24n,1)\big). $$
\item[(2)] For $n\geq1$ the coefficients $a_\omega(n)$ of $\omega(q)$ are given by
\begin{align*}
a_\omega(q)=\begin{cases}\frac{-i}{8\sqrt{24(\frac{n}{2}+1)-16}}\big(\textup{tr}_{e_{(3)}}^+(16-24(\frac{n}{2}+1),4)-\textup{tr}_{e_{(3)}}^-(16-24(\frac{n}{2}+1),4)\big), &\text{ if } n \text{ is even,}\\\frac{-i}{8\sqrt{24\frac{n+1}{2}-4}}\big(\textup{tr}_{e_{(3)}}^+(4-24\frac{n+1}{2},2)-\textup{tr}_{e_{(3)}}^-(4-24\frac{n+1}{2},2)\big), &\text{ if } n \text{ is odd.}  \end{cases}
\end{align*}
\end{itemize}
\end{thm}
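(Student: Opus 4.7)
The plan is to follow closely the proof of Theorem \ref{Formeln61}, now adapted to the order 3 mock theta functions $f$ and $\omega$. The two key players are the vector valued harmonic weak Maass form $\widetilde{H}_3 \in H^+_{1/2, \overline{\rho}_L}$ of weight $\nicefrac{1}{2}$ for the dual Weil representation of level $N=6$, and the Millson theta lift $\mathcal{I}^M(e_{(3)}, \tau) \in H^+_{1/2, \overline{\rho}_L}$. The strategy is to show that these two objects are equal and then extract the coefficient formulas via Theorem \ref{Alfes}.

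First I would compute the principal part of $\widetilde{H}_3$. Since $f(q) = 1 + q + O(q^2)$ and $\omega(q) = 1 + 2q + O(q^2)$ have no negative powers of $q$, only $h_0 = q^{-1/24}f(q)$ contributes, with principal term $q^{-1/24}$; reading off the definition of $\widetilde{H}_3$ yields the principal part $q^{-1/24}(\mathfrak{e}_1 - \mathfrak{e}_5 + \mathfrak{e}_7 - \mathfrak{e}_{11})$. Next I would verify that $e_{(3)}$ is an eigenform of all four Atkin-Lehner involutions $W_Q^6$ for $Q \in \{1, 2, 3, 6\}$ with explicit $\pm 1$ eigenvalues. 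The eta quotient $(\eta(z)\eta(2z)\eta(3z)\eta(6z))^{-2}$ is an eigenform of each $W_Q^6$ by standard $\eta$-transformation formulas, and the combination $E_4(z) + 4E_4(2z) - 9E_4(3z) - 36E_4(6z)$ in the numerator is tailored so that $e_{(3)}$ inherits this eigenform property. Consequently the Fourier expansion of $e_{(3)}$ at every cusp of $\Gamma_0(6)$ differs from the one at $\infty$ only by a global sign, which makes the principal part of $\mathcal{I}^M(e_{(3)}, \tau)$ explicitly computable and, by a direct match, equal to that of $\widetilde{H}_3$.

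With the principal parts in agreement, the difference $\widetilde{H}_3 - \mathcal{I}^M(e_{(3)}, \tau) \in H^+_{1/2, \overline{\rho}_L}$ has vanishing principal part, so by Lemma \ref{XiSpitze} it is a cusp form of weight $\nicefrac{1}{2}$. Since $S_{1/2, \overline{\rho}_L}$ at level $6$ is trivial (in full analogy with the argument used in the proof of Theorem \ref{Formeln61} at level $12$), we conclude $\widetilde{H}_3 = \mathcal{I}^M(e_{(3)}, \tau)$. Finally, applying Theorem \ref{Alfes} to read off the coefficient of $\mathcal{I}^M(e_{(3)}, \tau)$ at $q^{(24n - r^2)/24}\mathfrak{e}_r$ as $\frac{i}{\sqrt{24n - r^2}}\bigl(\textup{tr}^+_{e_{(3)}}(r^2 - 24n, r) - \textup{tr}^-_{e_{(3)}}(r^2 - 24n, r)\bigr)$ and comparing with the coefficients of $\widetilde{H}_3$ produces the stated formulas: the $\mathfrak{e}_1$-component yields $a_f(n)$, while the $\mathfrak{e}_4$-component (fed by $h_1 + h_2$, i.e.\ the even-indexed coefficients of $\omega$) and the $\mathfrak{e}_2$-component (fed by $h_2 - h_1$, i.e.\ the odd-indexed coefficients of $\omega$) produce the two cases of the formula for $a_\omega(n)$.

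The main obstacle is step two: the explicit computation of the Atkin-Lehner eigenvalues of $e_{(3)}$ and the resulting careful bookkeeping of signs, normalizing constants, and principal-part contributions at every cusp. Everything else is a faithful translation of the order-6 argument into the simpler level-6 setting.
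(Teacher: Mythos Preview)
Your approach is correct and mirrors exactly the template the paper uses for Theorem~\ref{Formeln61}; note, however, that the paper does not supply its own proof of this statement but simply cites it from \cite{bruinier2017algebraic}, so strictly speaking there is no ``paper's own proof'' to compare against beyond the analogous order-$6$ argument you are already emulating.

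One small point of bookkeeping you should be alert to: the principal part of $\widetilde{H}_3$ is $q^{-1/24}(\mathfrak{e}_1-\mathfrak{e}_5+\mathfrak{e}_7-\mathfrak{e}_{11})$, whereas the Millson lift of $e_{(3)}$ (whose principal part at $\infty$ starts with $q^{-1}$) will have principal part $2q^{-1/24}(\mathfrak{e}_1-\mathfrak{e}_5+\mathfrak{e}_7-\mathfrak{e}_{11})$ under the normalization used in the paper, exactly as in the order-$6$ case where $\widetilde{H}_{(6),1}$ has principal part $2q^{-1/48}(\cdots)$. Hence the correct identity is $2\widetilde{H}_3=\mathcal{I}^M(e_{(3)},\tau)$, not $\widetilde{H}_3=\mathcal{I}^M(e_{(3)},\tau)$, and this extra factor of $2$ is precisely what produces the $\tfrac{i}{2\sqrt{24n-1}}$ (rather than $\tfrac{i}{\sqrt{24n-1}}$) in the formula for $a_f(n)$ and the $\tfrac{-i}{8\sqrt{\cdots}}$ in the formula for $a_\omega(n)$. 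You flagged the normalization as the main obstacle, so just make sure this factor is tracked through.
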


\subsection{Mock Theta Functions of order $5$} \label{section mock theta functions of
order 5}
For the mock theta functions of order 5 the necessary completions and their
transformation properties have already been studied by Zwegers and Andersen in
\cite{zwegers2008mock} and \cite{andersen2016}, respectively. Using their results we derive algebraic formulas for their coefficients. The proofs are analogous to the corresponding proofs in subsection \ref{section mock theta functions of order 6}. \\
We define the two matrices
\begin{align}\label{Matrix51}
N_{(5)}=\begin{pmatrix}
\zeta_{60}^{-1}&0&0&0&0&0\\0&\zeta_{60}^{11}&0&0&0&0\\0&0&0&0&\zeta_{240}^{-1}&0\\0&0&0&0&0&\zeta_{240}^{71}\\0&0&\zeta_{240}^{-1}&0&0&0\\0&0&0&\zeta_{240}^{71}&0&0
\end{pmatrix}
\end{align}
and
\begin{align}\label{Matrix52}
M_{(5)}=\begin{pmatrix}
0&0&\sqrt{2} \ \sin(\frac{\pi}{5})&\sqrt{2} \ \sin(\frac{2\pi}{5})&0&0\\0&0&\sqrt{2} \ \sin(\frac{2\pi}{5})&-\sqrt{2} \ \sin(\frac{\pi}{5})&0&0\\
\frac{1}{\sqrt{2}} \ \sin(\frac{\pi}{5})&\frac{1}{\sqrt{2}} \ \sin(\frac{2\pi}{5})&0&0&0&0\\\frac{1}{\sqrt{2}} \ \sin(\frac{2\pi}{5})&-\frac{1}{\sqrt{2}} \ \sin(\frac{\pi}{5})&0&0&0&0\\0&0&0&0&\sin(\frac{2\pi}{5})&\sin(\frac{\pi}{5})\\0&0&0&0&\sin(\frac{\pi}{5})&\sin(\frac{2\pi}{5})
\end{pmatrix}.
\end{align}
\begin{thm}[\cite{zwegers2008mock}, Proposition 4.10]\label{WeilTrafo51}
The vector
$$F_{(5),1}(\tau)=\begin{pmatrix}
q^{-\frac{1}{60}} \ f_0(q)\\q^{\frac{11}{60}} \ f_1(q)\\q^{-\frac{1}{240}}\ \big(-1+F_0(q^{1/2})\big)\\q^{\frac{71}{240}} \ F_1(q^{1/2})\\q^{-\frac{1}{240}}\ (-1+F_0\big(-q^{1/2})\big)\\q^{\frac{71}{240}} \ F_1(-q^{1/2})
\end{pmatrix} $$
is the holomorphic part of
$H_{(5),1}=(f_{4,1},f_{196,1},f_{1,1},f_{169,1},g_{1,1},g_{169,1})^T\in H_{1/2}^+$, which is a harmonic weak Maass form of weight $\nicefrac{1}{2}$, transforming as
\begin{align}
H_{(5),1}(\tau+1)=N_{(5)} \ H_{(5),1}(\tau)
\end{align}
and
\begin{align}
H_{(5),1}\bigg(-\frac{1}{\tau}\bigg)=\sqrt{-i\tau} \ \frac{2}{\sqrt{5}} \ M_{(5)} \ H_{(5),1}(\tau),
\end{align}
where the matrices $N_{(5)}$ and $M_{(5)}$ are defined as in (\ref{Matrix51}) and
(\ref{Matrix52}).
\end{thm}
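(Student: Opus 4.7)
The plan is to follow exactly the strategy worked out for the order $6$ case in subsection \ref{section mock theta functions of order 6}: first compute the modular transformation behaviour of the purely holomorphic vector $F_{(5),1}$ under the two generators $T$ and $S$ of $\text{Mp}_2(\mathbb{Z})$, then isolate the non-holomorphic error term appearing under $S$ as a period integral of a suitable vector of unary theta functions, and finally define the completion $H_{(5),1}=F_{(5),1}-G_{(5),1}$ by subtracting off exactly that period integral.

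First I would handle the transformation under $T$. Substituting $\tau\mapsto\tau+1$ in each component of $F_{(5),1}$ only affects the prefactors $q^\ast$ and turns $\pm q^{1/2}$ into $\mp q^{1/2}$ in the components involving $F_0$ and $F_1$. A direct bookkeeping of the resulting roots of unity (using $\zeta_{60}^{-1},\zeta_{60}^{11},\zeta_{240}^{-1},\zeta_{240}^{71}$) reproduces the matrix $N_{(5)}$ of (\ref{Matrix51}); the matrix is block-diagonal with a $2\times2$ block acting on the $f_0,f_1$-components and a $4\times4$ block that permutes the $F_0(q^{1/2})$-components with the $F_0(-q^{1/2})$-components (and similarly for $F_1$). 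This step is essentially bookkeeping and should cause no real difficulty.

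The serious computation is the $S$-transformation. Here I would invoke the classical transformation formulas for $f_0,f_1,F_0,F_1$ as recorded in \cite{gordon2012survey} (or equivalently the formulas used by Zwegers in \cite{zwegers2008mock}): for $\alpha=\pi/(5\tau)$ or $\alpha=\pi/(10\tau)$, each of these mock theta functions equals a linear combination of the other mock theta functions of order $5$ evaluated at a dual variable, plus a Mordell integral. Collecting the linear part gives the matrix $\tfrac{2}{\sqrt{5}}M_{(5)}$ of (\ref{Matrix52}), where the entries $\sin(\pi/5),\sin(2\pi/5)$ come from the Rogers--Ramanujan type identities hidden in these transformations. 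The residual Mordell integrals assemble into a vector $R_{(5)}(\tau)$, and the key step is to rewrite this vector, component by component, as an integral
\[
R_{(5)}(\tau)=\frac{i^{3/2}}{\sqrt{c}}\int_0^{i\infty}\frac{g_{(5)}(z)}{\sqrt{-i(z\tau-1)}}\,dz
\]
for a vector $g_{(5)}$ of unary theta functions $\theta_{N,a}$ of level $N=60$ (this is the analogue of Lemma \ref{lemma integral representation of R_(6)}). The derivation uses the partial fraction expansion of $1/\cosh$ and the identity
\[
\int_{-\infty}^\infty\frac{e^{-\pi ty^2}}{y-ir}\,dy=\pi ir\int_0^\infty\frac{e^{-\pi r^2u}}{\sqrt{u+t}}\,du
\]
exactly as in the proof of Lemma \ref{lemma integral representation of R_(6)}, followed by regrouping the resulting exponential sums into the components of $g_{(5)}$. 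This is the technical heart of the argument, and verifying that the sums over residue classes modulo $60$ really do combine into the prescribed $\theta_{60,a}$'s is where mistakes are easiest to make.

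With $g_{(5)}$ in hand, I would define the completion
\[
G_{(5),1}(\tau):=\frac{i}{\sqrt{c}}\int_{-\overline{\tau}}^{i\infty}\frac{g_{(5)}(z)}{\sqrt{-i(z+\tau)}}\,dz
\]
and verify, by the same change of variables as in the order $6$ case, that $G_{(5),1}$ transforms under $T$ by the matrix $N_{(5)}$ and that $G_{(5),1}(-1/\tau)-\sqrt{-i\tau}\,\tfrac{2}{\sqrt{5}}M_{(5)}\,G_{(5),1}(\tau)=\sqrt{-i\tau}\,R_{(5)}(\tau)$, using only the known $\rho_L$-transformation of the vector-valued theta function with components $\theta_{60,a}$. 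Subtracting then cancels the obstruction in the $S$-transformation of $F_{(5),1}$, so $H_{(5),1}=F_{(5),1}-G_{(5),1}$ satisfies the asserted matrix identities. Harmonicity follows because $F_{(5),1}$ is holomorphic and $\Delta_{1/2}$ annihilates every period integral of the shape $\int\theta(z)/\sqrt{-i(z+\tau)}\,dz$, while the growth at the cusps is controlled by the explicit $q$-expansions of the mock theta functions and the standard estimate on the theta integral. Hence $H_{(5),1}\in H_{1/2}^+$ and $F_{(5),1}$ is its holomorphic part, as claimed.
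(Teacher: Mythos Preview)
Your proposal is a reasonable and workable proof sketch, but it is worth noting that the paper does \emph{not} prove this statement at all: Theorem~\ref{WeilTrafo51} is simply quoted from Zwegers' thesis \cite{zwegers2008mock}, Proposition~4.10, and the paper explicitly says at the start of subsection~\ref{section mock theta functions of order 5} that the completions and transformation properties for order~$5$ ``have already been studied by Zwegers and Andersen'' and that only their results are used.

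What you outline is therefore not a comparison with the paper's proof but an independent reproof along the lines of the order~$6$ template. Zwegers' original argument in \cite{zwegers2008mock} is rather different in flavour: he builds the completions via his $\mu$-function and indefinite theta series machinery, so that the non-holomorphic correction and its transformation behaviour fall out of the general theory of Chapter~1 of his thesis rather than from manipulating Mordell integrals case by case. Your approach---starting from the Gordon--McIntosh transformation formulas, isolating the Mordell integrals, and converting them to period integrals of unary theta series via partial fractions---is closer in spirit to Watson's classical method and to what the present paper does for order~$6$. Both routes lead to the same $H_{(5),1}$; yours is more elementary and self-contained within the paper's framework, while Zwegers' is more structural and avoids the component-by-component theta regrouping that you correctly flag as the most error-prone step.
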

\begin{thm}[\cite{zwegers2008mock}, Proposition 4.13]\label{WeilTrafo52}
The vector
$$F_{(5),2}(\tau)=\begin{pmatrix}
2 \ q^{-\frac{1}{60}} \ \psi_0(q)\\2 \ q^{\frac{11}{60}} \ \psi_1(q)\\q^{-\frac{1}{240}} \ \varphi_0(-q^{\frac{1}{2}})\\-q^{-\frac{49}{240}} \ \varphi_1(-q^{\frac{1}{2}})\\q^{-\frac{1}{240}} \ \varphi_0(q^{\frac{1}{2}})\\q^{-\frac{49}{240}} \ \varphi_1(q^{\frac{1}{2}})
\end{pmatrix} $$
is the holomorphic part of
$H_{(5),2}=(f_{4,2},f_{196,2},f_{1,2},f_{169,2},g_{1,2},g_{169,2})^T\in H_{1/2}^+$, which is a harmonic weak Maass form of weight $\nicefrac{1}{2}$, transforming as
\begin{align}
H_{(5),2}(\tau+1)=N_{(5)} \ H_{(5),2}(\tau)
\end{align}
and
\begin{align}
H_{(5),2}\bigg(-\frac{1}{\tau}\bigg)=\sqrt{-i\tau} \ \frac{2}{\sqrt{5}} \ M_{(5)} \ H_{(5),2}(\tau),
\end{align}
where the matrices $N_{(5)}$ and $M_{(5)}$ are defined as in (\ref{Matrix51}) and
(\ref{Matrix52}).
\end{thm}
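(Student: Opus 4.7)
The plan is to mirror step by step the argument carried out in detail in subsection 3.1 for the order $6$ case. The $T$-transformation $F_{(5),2}(\tau+1)=N_{(5)}\,F_{(5),2}(\tau)$ is immediate: since each component of $F_{(5),2}$ is a $q$-power times one of $\psi_0(q),\,\psi_1(q),\,\varphi_0(\pm q^{1/2}),\,\varphi_1(\pm q^{1/2})$, replacing $\tau$ by $\tau+1$ only produces the scalar $e(\alpha)$ coming from the prefactor $q^\alpha$ together with a $\pm$-swap of the $\pm q^{1/2}$ pairs, and these multipliers are exactly the entries of $N_{(5)}$.

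For the $S$-transformation I would insert the known transformation formulas for the fifth order mock theta functions $\psi_0,\psi_1,\varphi_0,\varphi_1$ under $\tau\mapsto -1/\tau$ (as compiled, for instance, in Gordon--McIntosh or Zwegers' thesis), evaluated at the two scales $\alpha=\pi i/(\tau)$ (respectively $\pi i/(2\tau)$) dictated by whether the argument is $q$ or $q^{1/2}$. The outcome is an identity
\begin{equation*}
F_{(5),2}\!\left(-\tfrac{1}{\tau}\right)=\sqrt{-i\tau}\,\tfrac{2}{\sqrt{5}}\,M_{(5)}\,F_{(5),2}(\tau)+R_{(5),2}(\tau),
\end{equation*}
in which $R_{(5),2}(\tau)$ is an explicit vector of Mordell-type integrals, analogous to the $R_{(6)}$ produced in Lemma~\ref{lemma transformation properties of F_(6),1}. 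The next step is to rewrite each component of $R_{(5),2}$ as a period integral of a vector valued unary theta function $g_{(5),2}$ built from the $\theta_{N,a}$ of appropriate level (here $N$ is a divisor of $60$, so that the relevant levels are $60$ and $240$). This uses the standard partial fraction expansion of $1/\cosh(\pi y)$ together with the Mordell identity
\begin{equation*}
\int_{-\infty}^{\infty}\frac{e^{-\pi ty^2}}{y-ir}\,dy=\pi i r\int_0^{\infty}\frac{e^{-\pi r^2 u}}{\sqrt{u+t}}\,du
\end{equation*}
from Lemma~1.18 of \cite{zwegers2001mock}, exactly as in the proof of Lemma~\ref{lemma integral representation of R_(6)}; the resulting theta-series combinations can be read off by sieving the $(2n+\tfrac12)$-type exponential sums through residue classes modulo $60$.

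Once $g_{(5),2}$ has been identified, define the non-holomorphic completion
\begin{equation*}
G_{(5),2}(\tau):=\frac{i}{C_{(5)}}\int_{-\overline{\tau}}^{i\infty}\frac{g_{(5),2}(z)}{\sqrt{-i(z+\tau)}}\,dz
\end{equation*}
with the normalizing constant $C_{(5)}$ chosen to match the coefficient in the integral representation of $R_{(5),2}$. A direct transformation of the defining integral, using (\ref{transformation of theta_N,a under T}) and (\ref{transformation of theta_N,a under S}) applied to $g_{(5),2}$, shows that $G_{(5),2}$ transforms under $T$ by $N_{(5)}$ up to an entire correction that matches $R_{(5),2}$, so that $H_{(5),2}:=F_{(5),2}-G_{(5),2}$ obeys the stated transformation laws. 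Finally, harmonicity follows because $G_{(5),2}$ is an Eichler integral of a weight $\nicefrac{3}{2}$ holomorphic theta series: an application of $\xi_{1/2}$ to $H_{(5),2}$ produces a constant multiple of $g_{(5),2}$, which is a holomorphic vector valued modular form, so $H_{(5),2}\in H^+_{1/2}$, and the holomorphic part is $F_{(5),2}$ as claimed.

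The only real obstacle, as in the order $6$ case, is the combinatorial bookkeeping: keeping track of all sign choices coming from the $\pm q^{1/2}$-pairs and matching the particular $\mathbb{Z}/60\mathbb{Z}$- and $\mathbb{Z}/240\mathbb{Z}$-combinations of theta functions that appear in each component of $g_{(5),2}$ so that the transformation defect of the Eichler integral cancels that of $F_{(5),2}$ exactly. Everything else is a mechanical translation of the arguments already spelled out in subsection 3.1, which is why the authors are content to cite the result of Zwegers and omit the proof here.
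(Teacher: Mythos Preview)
Your proposal is correct and follows exactly the template the paper lays out in subsection~3.1 for order~$6$; indeed the paper does not prove this theorem at all but simply cites it as Proposition~4.13 of Zwegers' thesis \cite{zwegers2008mock}, remarking that ``the proofs are analogous to the corresponding proofs in subsection~3.1.'' One small slip: you write that $G_{(5),2}$ ``transforms under $T$ by $N_{(5)}$ up to an entire correction that matches $R_{(5),2}$,'' but the correction term $R_{(5),2}$ arises only in the $S$-transformation, while the $T$-transformation of $G_{(5),2}$ is exact (compare the proof after Lemma~\ref{lemma integral representation of R_(6)}).
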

\begin{lemma}[\cite{andersen2016}, Lemma 5]\label{5VolleWeilDarst}
Suppose that $(f_{4,1},f_{196,1},f_{1,1},f_{169,1},g_{1,1},g_{169,1})^T$ transforms
with the representation given in Theorem \ref{WeilTrafo51}, and that
$(f_{4,2},f_{196,2},f_{1,2},f_{169,2},g_{1,2},g_{169,2})^T$ transforms with the
representation given in Theorem \ref{WeilTrafo52}. For $j=1,2$ we define the
function
\begin{align*}
\widetilde{H}_{(5),j}&=\sum\limits_{\substack{0<r<60\\r\equiv\pm1\ (10)\\\gcd(r,60)=1}}(a_r \ f_{1,j}+b_r \ g_{1,j}) \ (\mathfrak{e}_r-\mathfrak{e}_{-r})-\sum\limits_{\substack{0<r<60\\r\equiv\pm2\ (10)\\\gcd(r,60)=2}}f_{4,j} \ (\mathfrak{e}_r-\mathfrak{e}_{-r})\\&+\sum\limits_{\substack{0<r<60\\r\equiv\pm3\ (10)\\\gcd(r,60)=1}}(a_r \ f_{169,j}+b_r \ g_{169,j}) \ (\mathfrak{e}_r-\mathfrak{e}_{-r})-\sum\limits_{\substack{0<r<60\\r\equiv\pm4\ (10)\\\gcd(r,60)=2}}f_{196,j} \ (\mathfrak{e}_r-\mathfrak{e}_{-r}),
\end{align*}
where
$$a_r=\begin{cases}+1\quad\text{if }0<r<30,\\-1\quad\text{otherwise,}
\end{cases}\quad\text{and}\quad b_r=\begin{cases}+1\quad\text{if
}r\equiv\pm1,\pm13\ (60),\\-1\quad\text{otherwise}. \end{cases} $$
Then $\widetilde{H}_{(5),1}\in H_{1/2,\overline{\rho}_L}^+$ and $\widetilde{H}_{(5),2}\in H_{1/2,\overline{\rho}_L}^+$ both
transform like a vector valued modular form of weight $\nicefrac{1}{2}$ for the dual Weil
representation $\overline{\rho}_L$ of level $N=60$.
\end{lemma}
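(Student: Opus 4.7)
The plan is to verify directly that $\widetilde{H}_{(5),j}$ transforms correctly under the two generators $T$ and $S$ of $\text{Mp}_{2}(\mathbb{Z})$, in complete analogy with the corresponding construction in subsection \ref{section mock theta functions of order 6}. Since $F_{(5),1}$ and $F_{(5),2}$ satisfy the \emph{same} transformation laws (Theorems \ref{WeilTrafo51} and \ref{WeilTrafo52}), both cases $j=1,2$ can be handled by a single computation in which the components $(f_{4,j}, f_{196,j}, f_{1,j}, f_{169,j}, g_{1,j}, g_{169,j})$ play identical abstract roles.

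For the $T$-transformation one uses $\overline{\rho}_{L}(T)\mathfrak{e}_{r} = e(-r^{2}/240)\mathfrak{e}_{r}$ (here $L=\mathbb{Z}$ with $N=60$, so $2N=120$) and compares, residue class by residue class $r \pmod{120}$, the scalar phase on $(\mathfrak{e}_{r}-\mathfrak{e}_{-r})$ with the action of $N_{(5)}$ on the corresponding coefficient. The off-diagonal entries of $N_{(5)}$ swap the pairs $(f_{1,j}, g_{1,j})$ and $(f_{169,j}, g_{169,j})$, so the combination $a_{r}f_{1,j} + b_{r}g_{1,j}$ is sent to $\zeta_{240}^{-1}(b_{r}f_{1,j} + a_{r}g_{1,j})$. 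Matching this against $e(-r^{2}/240)(a_{r}f_{1,j}+b_{r}g_{1,j})$ forces $b_{r}/a_{r} = e((1-r^{2})/240)$, which is $+1$ when $r^{2} \equiv 1 \pmod{240}$ and $-1$ when $r^{2} \equiv 121 \pmod{240}$; a case-by-case check of the eight admissible residues confirms that the piecewise definitions of $a_{r}$ and $b_{r}$ produce precisely these signs. The analogous verification for the pair $(f_{169,j}, g_{169,j})$ and for the purely diagonal contributions of $f_{4,j}$ and $f_{196,j}$ is then immediate.

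For the $S$-transformation the starting point is
\[
\overline{\rho}_{L}(S)(\mathfrak{e}_{r}-\mathfrak{e}_{-r}) \;=\; \frac{2i\,e(1/8)}{\sqrt{120}}\sum_{r' \pmod{120}}\sin\!\Big(\frac{\pi r r'}{60}\Big)\,\mathfrak{e}_{r'},
\]
obtained from the formula for $\rho_{L}(S)$ after passing to the dual representation. One splits the sum over $r'$ according to the four groups of residue classes $r' \equiv \pm 1, \pm 2, \pm 3, \pm 4 \pmod{10}$ (together with the prescribed $\gcd$-conditions) appearing in the definition of $\widetilde{H}_{(5),j}$. After evaluating the resulting finite sine sums, each piece collapses to a multiple of $(\mathfrak{e}_{r}-\mathfrak{e}_{-r})$ whose coefficient is a linear combination of the entries of $M_{(5)}$; combining the four pieces and invoking Theorems \ref{WeilTrafo51} and \ref{WeilTrafo52} yields exactly $\widetilde{H}_{(5),j}(-1/\tau)/\sqrt{-i\tau}$, component by component.

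The main obstacle is the $S$-identity: it rests on showing that the restricted finite sine sums $\sum_{r'}\sin(\pi r r'/60)$ over each $\gcd$-class evaluate, after simplification, to multiples of $\sin(\pi/5)$ and $\sin(2\pi/5)$ matching the rows of $M_{(5)}$. The computation is delicate but completely explicit, and the signs $a_{r},b_{r}$ whose role was already transparent in the $T$-step reappear here as the unique choice of signs that make the four partial sums close up into precisely the combinations dictated by $M_{(5)}$.
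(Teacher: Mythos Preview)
The paper does not supply its own proof of this lemma; it is quoted verbatim as Lemma~5 of Andersen \cite{andersen2016} and used as a black box. Your direct-verification strategy---checking the $T$- and $S$-transformation laws of $\widetilde{H}_{(5),j}$ against those of $\overline{\rho}_L$ for $N=60$---is the natural approach and is essentially how such statements are proved (and how the analogous lemma for order~6 is handled in subsection~\ref{section mock theta functions of order 6}). Your analysis of the $T$-step is correct: the swap of $f_{1,j}\leftrightarrow g_{1,j}$ under $N_{(5)}$ forces $b_r/a_r=e((1-r^2)/240)\in\{\pm1\}$ on the eight admissible residues, and your case check matches the stated piecewise definitions of $a_r$ and $b_r$. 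The $S$-step is correctly set up via the sine-kernel formula for $\overline{\rho}_L(S)(\mathfrak{e}_r-\mathfrak{e}_{-r})$; what remains is the explicit evaluation of the restricted sums $\sum_{r'}\sin(\pi r r'/60)$ over each of the four $\gcd$-classes and their identification with the rows of $M_{(5)}$, which you have outlined but not carried out. That computation is finite and mechanical, so your plan is sound, but be aware that it constitutes the bulk of the actual work.
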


\begin{defin}
We define the functions
\begin{align}\label{e51}
e_{(5),1}(z):=\frac{\eta(z)\eta(12z)\eta(15z)\eta(20z)}{\eta(3z)\eta(4z)\eta(5z)\eta(60z)}-\frac{\eta(3z)\eta(4z)\eta(5z)\eta(60z)}{\eta(z)\eta(12z)\eta(15z)\eta(20z)}
\end{align}
and
\begin{align}\label{e52}
e_{(5),2}:=\bigg(
\frac{\eta(z)\eta(12z)\eta(15z)\eta(20z)}{\eta(3z)\eta(4z)\eta(5z)\eta(60z)}\bigg)^2-\bigg(\frac{\eta(3z)\eta(4z)\eta(5z)\eta(60z)}{\eta(z)\eta(12z)\eta(15z)\eta(20z)}\bigg)^2.
\end{align}
\end{defin}
These functions are weakly holomorphic modular forms of weight $0$, level $60$ whose
principal parts start with $q^{-1}$ and $q^{-2}$, respectively.
\begin{thm}
Let $e_{(5),1}(z),e_{(5),2}(z)\in M_0^{!}(60)$ be defined as in (\ref{e51}) and
(\ref{e52}) and put
$E_{(5)}(z):=-e_{(5),2}-e_{(5),1}$.
\begin{itemize}
\item[(1)] For $n\geq1$ the coefficients $a_{f_0}(n)$ of $f_0(q)$ are given by
$$a_{f_0}(n)=\frac{-i}{2\sqrt{240n-4}}\big(\textup{tr}_{E_{(5)}}^+(4-240n,2)-\textup{tr}_{E_{(5)}}^-(4-240n,2)\big).
$$
\item[(2)] For $n\geq1$ the coefficients $a_{f_1}(n)$ of $f_1(q)$ are given by
$$a_{f_1}(n)=\frac{-i}{2\sqrt{240(n+1)-196}}\big(\textup{tr}_{E_{(5)}}^+(196-240(n+1),14)-\textup{tr}_{E_{(5)}}^-(196-240(n+1),14)\big).
$$
\item[(3)] For $n\geq1$ the coefficients $a_{F_0}(n)$ of $F_0(q)$ are given by
\begin{align*}
a_{F_0}(n)=\begin{cases}
\frac{i}{4\sqrt{240\frac{n}{2}-1}}\big(\textup{tr}_{E_{(5)}}^+(1-240\frac{n}{2},1)-\textup{tr}_{E_{(5)}}^-(1-240\frac{n}{2},1)\big),
&\text{if } n\ \text{is even},
\\\frac{i}{4\sqrt{240\frac{n+1}{2}-121}}\big(\textup{tr}_{E_{(5)}}^+(121-240\frac{n+1}{2},11)-\textup{tr}_{E_{(5)}}^-(121-240\frac{n+1}{2},11)\big),
&\text{if } n\ \text{is odd.} \end{cases} 
\end{align*}
\item[(4)] For $n\geq1$ the coefficients $a_{F_1}(n)$ of $F_1(q)$ are given by
\begin{align*}
a_{F_1}(n)=\begin{cases}
\frac{i}{4\sqrt{240\frac{n+2}{2}-169}}\big(\textup{tr}_{E_{(5)}}^+(169-240\frac{n+2}{2},13)-\textup{tr}_{E_{(5)}}^-(169-240\frac{n+2}{2}),13)\big),&\text{if
}n\
\text{is even},\\\frac{i}{4\sqrt{240\frac{n+1}{2}-49}}\big(\textup{tr}_{E_{(5)}}^+(49-240\frac{n+1}{2},7)-\textup{tr}_{E_{(5)}}^-(49-240\frac{n+1}{2},7)\big),&\text{if
}n\ \text{is odd.} \end{cases}
\end{align*}
\end{itemize}
\end{thm}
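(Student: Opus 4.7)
The plan is to imitate the proof of Theorem~\ref{Formeln61} at level $N=60$, using the vector valued harmonic weak Maass form $\widetilde{H}_{(5),1}$ from Lemma~\ref{5VolleWeilDarst} together with the weakly holomorphic input $E_{(5)}=-e_{(5),2}-e_{(5),1}$ to the Millson theta lift.

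First I would compute the principal part of $\widetilde{H}_{(5),1}$. Since $f_0(q)=1+O(q)$, $f_1(q)=q+O(q^2)$, $F_0(q^{1/2})=1+O(q)$ and $F_1(q^{1/2})=O(q)$, the only negative $q$-power appearing is $-q^{-1/240}$, and it is contributed exclusively by the components $f_{1,1}$ and $g_{1,1}$ of $H_{(5),1}$. Substituting into Lemma~\ref{5VolleWeilDarst} then shows that the principal part is supported on the vectors $\mathfrak{e}_r-\mathfrak{e}_{-r}$ with $r\equiv\pm1\pmod{10}$ and $\gcd(r,60)=1$, with explicit coefficients proportional to $(a_r+b_r)q^{-1/240}$.

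Next I would check that $e_{(5),1}$ and $e_{(5),2}$ are simultaneous eigenfunctions of every Atkin-Lehner involution $W_Q^{60}$ on $M_0^!(60)$. This follows from the standard transformation of the eta quotient
$$A(z):=\frac{\eta(z)\eta(12z)\eta(15z)\eta(20z)}{\eta(3z)\eta(4z)\eta(5z)\eta(60z)}$$
under each $W_Q^{60}$, which either fixes $A$ or sends it to $\pm A^{-1}$; the combinations $A-A^{-1}$ and $A^2-A^{-2}$ are then eigenvectors with explicit $\pm 1$ eigenvalues. Consequently, the Fourier expansions of $E_{(5)}$ at all cusps of $\Gamma_0(60)$ differ from its $q$-expansion at $\infty$ only by signs, which is exactly what is needed to control the principal part of the Millson theta lift on each $\mathfrak{e}_r$-component.

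Then I would argue that $\mathcal{I}^M(E_{(5)},\tau)\in H^+_{1/2,\overline{\rho}_L}$ has the same principal part as $\widetilde{H}_{(5),1}$. Since the $\xi_{1/2}$-image of $\widetilde{H}_{(5),1}$ is a vector valued holomorphic modular form of weight $\nicefrac{3}{2}$ (an explicit theta-series combination, in analogy with Corollary~\ref{corollary xi-image order 6}), Lemma~\ref{XiSpitze} applied to the difference yields $\widetilde{H}_{(5),1}-\mathcal{I}^M(E_{(5)},\tau)\in S_{1/2,\overline{\rho}_L}$. A direct check that this cusp space is trivial (or at most consists of unary theta series whose Fourier support is incompatible with the difference) gives $\widetilde{H}_{(5),1}=\mathcal{I}^M(E_{(5)},\tau)$. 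Finally, Theorem~\ref{Alfes} states that the coefficient of index $(-D,r)$ of the holomorphic part of $\mathcal{I}^M(E_{(5)},\tau)$ equals $\tfrac{i}{\sqrt{-D}}\bigl(\textup{tr}^+_{E_{(5)}}(D,r)-\textup{tr}^-_{E_{(5)}}(D,r)\bigr)$, and matching indices component by component against $\widetilde{H}_{(5),1}$ yields the four stated formulas. The even/odd case distinctions for $F_0$ and $F_1$ are forced by the substitution $q\mapsto\pm q^{1/2}$, which splits Fourier expansions by parity of $n$ and selects different residues $r\pmod{60}$, namely $r\in\{\pm1,\pm11\}$ for $F_0$ and $r\in\{\pm7,\pm13\}$ for $F_1$. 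The main obstacle is the sign bookkeeping: one has to verify that the Atkin-Lehner eigenvalue pattern of $e_{(5),1}$ and $e_{(5),2}$ cancels the sign structure $a_r,b_r$ from Lemma~\ref{5VolleWeilDarst} component by component, and it is precisely this matching that dictates the specific combination $E_{(5)}=-e_{(5),2}-e_{(5),1}$ rather than any other linear combination of the two.
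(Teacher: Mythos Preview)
Your approach is exactly the one the paper intends: it states only that the proof is analogous to that of Theorem~\ref{Formeln61}, and you have correctly reproduced that template at level $N=60$ with $\widetilde{H}_{(5),1}$ and the input $E_{(5)}$.

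There is one slip in your principal part computation. You write that ``the only negative $q$-power appearing is $-q^{-1/240}$'', but you also record $f_0(q)=1+O(q)$, and hence the component $f_{4,1}=q^{-1/60}f_0(q)$ contributes the term $q^{-1/60}=q^{-4/240}$ as well. In $\widetilde{H}_{(5),1}$ this appears (with a sign) at the vectors $\mathfrak{e}_r-\mathfrak{e}_{-r}$ with $r\equiv\pm2\pmod{10}$ and $\gcd(r,60)=2$. This second pole is precisely what forces the presence of $-e_{(5),2}$ (whose expansion begins $q^{-2}+\cdots$) in $E_{(5)}$; without it you cannot match the full principal part of $\mathcal{I}^M(E_{(5)},\tau)$ against that of $\widetilde{H}_{(5),1}$, and the appeal to Lemma~\ref{XiSpitze} would not go through. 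Once you include this contribution, the rest of your argument is fine.
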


\begin{thm}
Let $e_{(5),1}\in M_0^{!}(60)$ be defined as in (\ref{e51}).
\begin{itemize}
\item[(1)] For $n\geq1$ the coefficients $a_{\psi_0}(n)$ of $\psi_0(q)$ are given by
$$a_{\psi_0}(n)=\frac{-i}{2\sqrt{240n-4}}\big(\textup{tr}_{e_{(5),1}}^+(4-240n,2)-\textup{tr}_{e_{(5),1}}^-(4-240n,2)\big).
$$
\item[(2)] For $n\geq1$ the coefficients $a_{\psi_1}(n)$ of $\psi_1(q)$ are given by
$$a_{\psi_1}(n)=\frac{-i}{2\sqrt{240(n+1)-196}}\big(\textup{tr}_{e_{(5),1}}^+(196-240(n+1),14)-\textup{tr}_{e_{(5),1}}^-(196-240(n+1),14)\big).
$$
\item[(3)] For $n\geq1$ the coefficients $a_{\varphi_0}(n)$ of $\varphi_0(q)$ are
given by
\begin{align*}
a_{\varphi_0}(n)=\begin{cases}\frac{i}{2\sqrt{240\frac{n}{2}-1}}\big(\textup{tr}_{e_{(5),1}}^+(1-240\frac{n}{2},1)-\textup{tr}_{e_{(5),1}}^-(1-240\frac{n}{2},1)\big),
&\text{if } n\ \text{is even},
\\\frac{-i}{2\sqrt{240\frac{n+1}{2}-121}}\big(\textup{tr}_{e_{(5),1}}^+(121-240\frac{n+1}{2},11)-\textup{tr}_{e_{(5),1}}^-(121-240\frac{n+1}{2},11)\big),
&\text{if } n\ \text{is odd.} \end{cases}
\end{align*}
\item[(4)] For $n\geq1$ the coefficients $a_{\varphi_1}(n)$ of $\varphi_1(q)$ are
given by
\begin{align*}
a_{\varphi_1}(n)=\begin{cases}\frac{-i}{2\sqrt{240\frac{n}{2}-49}}\big(\textup{tr}_{e_{(5),1}}^+(49-240\frac{n}{2},7)-\textup{tr}_{e_{(5),1}}^-(49-240\frac{n}{2},7)\big),
&\text{if }n\
\text{is even},\\\frac{i}{2\sqrt{240\frac{n+1}{2}-169}}\big(\textup{tr}_{e_{(5),1}}^+(169-240\frac{n+1}{2},13)-\textup{tr}_{e_{(5),1}}^-(169-240\frac{n+1}{2},13)\big),
&\text{if }n\ \text{is odd.} \end{cases}
\end{align*}
\end{itemize}
\end{thm}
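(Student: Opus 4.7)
The plan is to prove this theorem in complete analogy with Theorem \ref{Formeln61}, by exhibiting $\widetilde{H}_{(5),2}$ as the Millson theta lift of $e_{(5),1}$ and then reading off coefficients via Theorem \ref{Alfes}. First I would apply Lemma \ref{5VolleWeilDarst} to $H_{(5),2}$ from Theorem \ref{WeilTrafo52} to get a vector valued harmonic weak Maass form $\widetilde{H}_{(5),2}\in H^+_{1/2,\overline{\rho}_L}$ for the Weil representation of level $N=60$. Expanding the defining $q$-series of $\psi_0,\psi_1,\varphi_0,\varphi_1$, each of which begins with a nonzero constant, produces the principal part of $\widetilde{H}_{(5),2}$ as an explicit combination of monomials $q^{-s/240}\,\mathfrak{e}_r$ with $s\in\{4,196,1,121,49,169\}$, attached to basis vectors $\mathfrak{e}_r-\mathfrak{e}_{-r}$ with the sign patterns $a_r,b_r$ from Lemma \ref{5VolleWeilDarst}.

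Next, I would verify that $e_{(5),1}$, being an eta quotient of weight $0$ and level $60$, is a weakly holomorphic modular form on $\Gamma_0(60)$ with principal part $q^{-1}$ at $\infty$, and is an eigenfunction of every Atkin-Lehner involution $W_Q$ for $Q$ an exact divisor of $60$. Computing the orders of vanishing of $e_{(5),1}$ at the cusps of $\Gamma_0(60)$ via the standard formula for eta quotients pins down these eigenvalues, and together they force the principal part of $e_{(5),1}$ at each cusp to agree with that at $\infty$ up to a sign. Invoking the description of the Millson theta lift in \cite{alfesthesis} and \cite{alfes2018theta}, one then reads off that the principal part of $\mathcal{I}^M(e_{(5),1},\tau)$ coincides with that of $\widetilde{H}_{(5),2}$.

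The difference $\widetilde{H}_{(5),2}-\mathcal{I}^M(e_{(5),1},\tau)$ therefore lies in $H^+_{1/2,\overline{\rho}_L}$ with vanishing principal part. Lemma \ref{XiSpitze} forces it to be a cusp form, and since $S_{1/2,\overline{\rho}_L}=\{0\}$ we obtain $\widetilde{H}_{(5),2}=\mathcal{I}^M(e_{(5),1},\tau)$. Comparing the holomorphic coefficients of both sides by means of Theorem \ref{Alfes} at the indices $(-D,r)$ with $D=r^2-240n$ and $r\in\{2,14,1,5,7,11,13\}$, and tracking the normalising scalars appearing in $F_{(5),2}$ (in particular the factor $2$ in front of $\psi_0,\psi_1$), then yields the four claimed formulas. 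The parity split in cases (3) and (4) is forced by the two residue classes $r\pmod{60}$ whose squares modulo $240$ hit the exponent of $q$ in the components $\varphi_0(\pm q^{1/2})$ and $\varphi_1(\pm q^{1/2})$, respectively.

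The main obstacle will be the bookkeeping of signs: one must check that the Atkin-Lehner eigenvalues of $e_{(5),1}$ at the cusps of $\Gamma_0(60)$ match, cusp by cusp, the sign pattern $a_r,b_r$ with which the principal part of $\widetilde{H}_{(5),2}$ distributes over the pairs $\mathfrak{e}_r-\mathfrak{e}_{-r}$. It is precisely this matching that distinguishes the present use of $e_{(5),1}$ alone from the combination $E_{(5)}=-e_{(5),2}-e_{(5),1}$ needed in the companion theorem for $F_{(5),1}$, and the verification should be routine but requires carrying along the normalisation constants carefully through Theorem \ref{Alfes}.
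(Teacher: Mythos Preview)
Your approach is correct and matches the paper's own (the paper merely states that the proof is analogous to that of Theorem~\ref{Formeln61}). One small correction: the principal part of $\widetilde{H}_{(5),2}$ involves only the exponents $q^{-s/240}$ with $s\in\{1,4,49\}$, since the $\psi_1$ component $2q^{11/60}\psi_1(q)$ has positive leading exponent and the half-integral expansions of $\varphi_0(\pm q^{1/2})$, $\varphi_1(\pm q^{1/2})$ contribute only via their constant terms.
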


\subsection{Mock Theta Functions of order $7$} \label{section mock theta functions of
order 7}
Similar to the previous subsection the necessary completion and its transformation behaviour have already been studied by Zwegers and Andersen in \cite{zwegers2008mock} and \cite{andersen2016vector}, respectively. We use their results to derive algebraic formulas for the coefficients of the seventh order mock theta functions.
\begin{thm}[\cite{zwegers2008mock}, Proposition 4.5]
The vector 
\begin{align*}
F_{(7)}(\tau)=\begin{pmatrix}
q^{-\frac{1}{168}} \ \mathcal{F}_0(q)\\q^{\frac{47}{168}} \ \mathcal{F}_2(q)\\q^{-\frac{25}{168}} \ \mathcal{F}_1(q)
\end{pmatrix}
\end{align*}
is the holomorphic part of a harmonic weak Maass form $H_{(7)}=(f_1,f_{121},f_{25})^T\in
H_{1/2}^+$ of weight $\nicefrac{1}{2}$, transforming as
\begin{align}
H_{(7)}(\tau+1)=\begin{pmatrix}
\zeta_{168}^{-1}&0&0\\0&\zeta_{168}^{47}&0\\0&0&\zeta_{168}^{-25}
\end{pmatrix} \ H_{(7)}(\tau)
\end{align}
and
\begin{align}
H_{(7)}\bigg(-\frac{1}{\tau}\bigg)=\sqrt{-i\tau} \ \frac{2}{\sqrt{7}} \ \begin{pmatrix}
\sin(\frac{\pi}{7})&\sin(\frac{3\pi}{7})&\sin(\frac{2\pi}{7})\\\sin(\frac{3\pi}{7})&-\sin(\frac{2\pi}{7})&\sin(\frac{\pi}{7})\\\sin(\frac{2\pi}{7})&\sin(\frac{\pi}{7})&-\sin(\frac{3\pi}{7})
\end{pmatrix} \ H_{(7)}(\tau).
\end{align}
\label{zwegerstrafo}
\end{thm}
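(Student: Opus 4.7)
The plan is to reproduce Zwegers' argument by mirroring the strategy used for the sixth-order functions in Lemmas \ref{lemma transformation properties of F_(6),1}--\ref{lemma integral representation of R_(6)} and Theorem \ref{theorem transformation properties of H_(6),1, H_(6),2} above. First, the $T$-transformation is immediate: replacing $\tau$ by $\tau+1$ multiplies each scalar factor $q^a$ in the three components of $F_{(7)}$ by $e^{2\pi i a}$, which yields precisely the diagonal matrix $\mathrm{diag}(\zeta_{168}^{-1},\zeta_{168}^{47},\zeta_{168}^{-25})$. Second, for the $S$-transformation I would invoke the classical Mordell-type transformation laws for $\mathcal{F}_0,\mathcal{F}_1,\mathcal{F}_2$ collected in \cite{gordon2012survey}, evaluated at $\alpha=\pi i/\tau$ so that the dual nome is $q_1 = e^{2\pi i \tau/7}$. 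This expresses
\[
\frac{1}{\sqrt{-i\tau}}\,F_{(7)}\!\left(-\tfrac{1}{\tau}\right) \;=\; \tfrac{2}{\sqrt{7}}\, M_{(7)}\, F_{(7)}(\tau) \;+\; R_{(7)}(\tau),
\]
where $M_{(7)}$ is the sine matrix appearing in the statement and $R_{(7)}(\tau)$ is a $3$-vector of Mordell integrals of the form $\int_0^\infty e^{-\alpha x^2}\,\cosh(\beta x)/\cosh(\gamma x)\,dx$ with $\beta/\gamma \in \{\tfrac{1}{7},\tfrac{3}{7},\tfrac{5}{7}\}$.

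Third, following the proof of Lemma \ref{lemma integral representation of R_(6)}, I would rewrite $R_{(7)}(\tau)$ as a theta integral. Combining the partial fraction decomposition of $1/\cosh(\pi y)$ with the Gaussian identity
\[
\int_{-\infty}^{\infty}\frac{e^{-\pi t y^2}}{y-ir}\,dy \;=\; \pi i r\int_0^\infty \frac{e^{-\pi r^2 u}}{\sqrt{u+t}}\,du,
\]
each component of $R_{(7)}$ becomes $\frac{c_j}{\sqrt{-i\tau}}\int_0^{i\infty}g_{(7),j}(z)/\sqrt{-i(z-\tfrac{1}{i\tau})}\,dz$ for an explicit vector $g_{(7)}=(g_{(7),0},g_{(7),1},g_{(7),2})^T$ whose components are $\mathbb{Q}$-linear combinations of unary theta functions $\theta_{42,a}$ from Section \ref{subsection lattices and the weil representation} (the natural level here is $N=42$ since $4N=168$). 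The matching $\sum_{n\in\mathbb{Z}}n\,q^{n^2/168}$-terms with residues modulo $14$ is what produces the three numerators, and the sines $\sin(k\pi/7)$ in $M_{(7)}$ appear exactly because the twisted partial-fraction expansion of $1/\cosh$ distinguishes residue classes modulo $7$.

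Finally, I would define the Eichler-type completion
\[
G_{(7)}(\tau) \;:=\; \frac{i}{\sqrt{168}}\int_{-\bar\tau}^{i\infty}\frac{g_{(7)}(z)}{\sqrt{-i(z+\tau)}}\,dz,
\]
and verify by contour manipulation, using the holomorphic theta transformations $g_{(7)}(z-1)=N_{(7)}^{(\theta)}\,g_{(7)}(z)$ and $g_{(7)}(-1/z)=(-iz)^{3/2}\bigl(-\tfrac{2}{\sqrt{7}}M_{(7)}\bigr)g_{(7)}(z)$ (which match the matrices $N_{(7)}, M_{(7)}$ from the theorem because $\theta_{42,a}$ is weight $\nicefrac{3}{2}$), that $G_{(7)}$ transforms under $T$ exactly like $F_{(7)}$ and that its $S$-transformation deviates from the matrix action by precisely $R_{(7)}(\tau)$. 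Setting $H_{(7)}:=F_{(7)}-G_{(7)}$ then yields the claimed transformation laws; harmonicity and the linear exponential growth at cusps follow because the non-holomorphic part is the Eichler integral of a holomorphic weight-$\nicefrac{3}{2}$ theta form. The main obstacle is the third step, the explicit Mordell-to-theta rewriting: identifying the correct vector $g_{(7)}$ and checking the sign/constant bookkeeping between the sine matrix and the theta components is technically delicate, but once $g_{(7)}$ is pinned down, the rest is a longer variant of the order-$6$ calculation.
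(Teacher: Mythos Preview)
The paper does not prove this theorem at all: it is quoted verbatim as Proposition~4.5 from Zwegers' thesis \cite{zwegers2008mock}, and the introduction to Section~\ref{section mock theta functions of order 7} explicitly says that the completion and transformation laws are taken from \cite{zwegers2008mock} and \cite{andersen2016vector} and then \emph{used} to derive the coefficient formulas. So there is no ``paper's own proof'' to compare against.

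That said, your sketch is a faithful reconstruction of Zwegers' argument, following exactly the template the paper spells out for order~$6$: immediate $T$-transformation from the $q$-powers, $S$-transformation via the classical Mordell-type laws producing a remainder $R_{(7)}$, rewriting $R_{(7)}$ as a period integral of unary theta functions via the partial-fraction/Gaussian trick, and then subtracting the Eichler integral $G_{(7)}$. The level $N=42$ is correct. One small point to double-check is the normalisation constant in front of $G_{(7)}$: for order~$6$ the paper has $i/\sqrt{24}=i/\sqrt{2N}$ with $N=12$, so the analogue here should be $i/\sqrt{84}$ rather than $i/\sqrt{168}$; but you already flag the constant bookkeeping as the delicate part, and this does not affect the structure of the argument.
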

\begin{lemma}[\cite{andersen2016vector}, Lemma 4] \label{7VolleWeilDarst}
Suppose that $(f_1,f_{121},f_{25})^T$ transforms with the representation given in
Theorem \ref{zwegerstrafo}. Then the function
\begin{align*}
\widetilde{H}_{(7)}=\sum\limits_{r\in \mathbb{Z}/168\mathbb{Z}}\widetilde{H}_r\mathfrak{e}_r=
f_1 \ (\mathfrak{e}_1-\mathfrak{e}_{-1})+f_1 \ (\mathfrak{e}_{41}-\mathfrak{e}_{-41})-\sum\limits_{\substack{2\leq
r\leq 40\\r^2 \ (168)\in\{1,25,121\}}}f_{r^2} \ (\mathfrak{e}_r-\mathfrak{e}_{-r})
\end{align*}
transforms like a vector valued modular form of weight $\nicefrac{1}{2}$ for the dual Weil representation $\overline{\rho}_L$ of level $N=42$, so that $\widetilde{H}_{(7)}\in
H_{1/2,\overline{\rho}_L}^+$.
\end{lemma}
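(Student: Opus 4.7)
The plan is to verify that $\widetilde{H}_{(7)}$ satisfies the modular transformation law of weight $\nicefrac{1}{2}$ for $\overline{\rho}_L$ on the generators $T$ and $S$ of $\text{Mp}_2(\mathbb{Z})$, using the transformation of $(f_1,f_{121},f_{25})^T$ from Theorem \ref{zwegerstrafo}. The strategy parallels Lemma \ref{5VolleWeilDarst} for the fifth-order case and the explicit computation carried out for $\widetilde{H}_{(6),j}$ in the subsection on order $6$: once we interpret $\widetilde{H}_{(7)}$ as a linear combination of antisymmetric basis vectors $\mathfrak{e}_r-\mathfrak{e}_{-r}$ with scalar coefficients $\pm f_{r^2\ (168)}$, the required transformation reduces to finite identities between Gauss-like sums and the entries of the $S$-matrix from Theorem \ref{zwegerstrafo}.

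The $T$-transformation is termwise. With $N=42$ and $Q(-r)=Q(r)=r^2/168$, the dual Weil representation gives $\overline{\rho}_L(T)(\mathfrak{e}_r-\mathfrak{e}_{-r})=e(-r^2/168)(\mathfrak{e}_r-\mathfrak{e}_{-r})$, while Theorem \ref{zwegerstrafo} gives $f_m(\tau+1)=e(-m/168)\,f_m(\tau)$ for $m\in\{1,25,121\}$. Since each summand in $\widetilde{H}_{(7)}$ pairs $\mathfrak{e}_r-\mathfrak{e}_{-r}$ with $\pm f_{r^2\ (168)}$, the two eigenvalues agree whenever $r^2\equiv m\ (168)$, which is precisely the condition built into the definition of $\widetilde{H}_{(7)}$.

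The $S$-transformation is where the actual work lies. Using $\overline{\rho}_L(S)\mathfrak{e}_r=\frac{e(1/8)}{\sqrt{84}}\sum_{r'\ (84)}e(rr'/84)\mathfrak{e}_{r'}$ and substituting the explicit $S$-matrix from Theorem \ref{zwegerstrafo}, the desired equality $\widetilde{H}_{(7)}(-1/\tau)=\sqrt{-i\tau}\,\overline{\rho}_L(S)\widetilde{H}_{(7)}(\tau)$ reduces, after collecting the coefficient of each $\mathfrak{e}_{r'}-\mathfrak{e}_{-r'}$, to a finite family of trigonometric identities of the shape
\[
\sum_{\substack{1\le r\le 41\\ r^2\equiv m\ (168)}}\epsilon(r)\,\sin\!\left(\frac{\pi rr'}{42}\right)\;=\;c_{m,r'}\,\sin\!\left(\frac{k\pi}{7}\right),
\]
where $m\in\{1,25,121\}$, $k\in\{1,2,3\}$ depends on $r'\bmod 7$, and $\epsilon(r),c_{m,r'}$ are explicit constants. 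Via the Chinese remainder isomorphism $\mathbb{Z}/84\mathbb{Z}\cong\mathbb{Z}/12\mathbb{Z}\times\mathbb{Z}/7\mathbb{Z}$ these sums decouple into a piece modulo $12$ and a quadratic Gauss sum over $\mathbb{F}_7^\times$; it is the latter that produces the $\sin(k\pi/7)$ factors. The main obstacle is bookkeeping: one must carefully match which $r$-classes modulo $168$ contribute to a given $r'$, track the signs inherited from the antisymmetric combinations, and align the result with the entries $\frac{2}{\sqrt{7}}\sin(k\pi/7)$ of the $S$-matrix. Once this is set up, the mod-$12$ and mod-$7$ pieces each evaluate in closed form and the identities follow.
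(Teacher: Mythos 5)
The paper itself contains no proof of this lemma: it is quoted from Andersen (\cite{andersen2016vector}, Lemma 4), and the order-$7$ subsection relies entirely on that citation. Your plan --- verify the weight-$\nicefrac{1}{2}$, $\overline{\rho}_L$ transformation on the generators $T$ and $S$, with the $T$-case a termwise eigenvalue match and the $S$-case reduced to a finite family of sine/Gauss-sum identities that decouple via the Chinese remainder theorem into a mod-$12$ and a mod-$7$ piece --- is essentially the direct verification carried out in the cited source, and parallels the paper's own order-$6$ computation, so the route is the intended one; your $T$-step is correct as written (indeed $\zeta_{168}^{47}=e(-121/168)$, so all three components have $T$-eigenvalue $e(-m/168)$).

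Three points need repair before the $S$-step actually closes. (i) Your target identity mixes two normalizations: if $\overline{\rho}_L(S)$ carries the factor $e(1/8)$, as you write it, then the weight-$\nicefrac{1}{2}$ automorphy factor is $\sqrt{\tau}$, so the law is $\widetilde{H}_{(7)}(-1/\tau)=\sqrt{\tau}\;\overline{\rho}_L(S)\widetilde{H}_{(7)}(\tau)$; equivalently, in the $\sqrt{-i\tau}$ convention of Theorem \ref{zwegerstrafo} the relevant matrix is $\tfrac{i}{\sqrt{84}}\big(e(rr'/84)\big)_{r',r}$ rather than $\tfrac{e(1/8)}{\sqrt{84}}\big(e(rr'/84)\big)_{r',r}$. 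Since $\sqrt{-i\tau}=e^{-\pi i/4}\sqrt{\tau}$, your stated law is off by the phase $e(-1/8)$, and in a computation whose entire content is matching explicit constants against the entries $\tfrac{2}{\sqrt{7}}\sin(k\pi/7)$, carrying this through literally would make the final identities fail; fix the bookkeeping before evaluating the sums. (ii) When you collect coefficients you must also treat the components $\mathfrak{e}_{r'}-\mathfrak{e}_{-r'}$ with $\gcd(r',42)>1$, where $\widetilde{H}_{r'}=0$: there it suffices to check that each individual sine-sum coefficient (of $f_1$, $f_{25}$, $f_{121}$ separately) vanishes, and these vanishing identities belong to the same finite family as the ones you display with nonzero right-hand side. (iii) The lemma asserts $\widetilde{H}_{(7)}\in H^+_{1/2,\overline{\rho}_L}$, so after the transformation law you still need the (easy) observation that harmonicity, the growth condition at the cusps, and the fact that the shadow is a cusp form are inherited componentwise from $H_{(7)}\in H^+_{1/2}$, the shadows being unary theta series of weight $\nicefrac{3}{2}$. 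With these adjustments your outline is sound and coincides with the argument of the cited reference.
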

\begin{defin}
We define the function
\begin{align}\label{e7}
e_{(7)}(z):=\Bigg(\frac{\eta(z)\eta(6z)\eta(14z)\eta(21z)}{\eta(2z)\eta(3z)\eta(7z)\eta(42z)}\Bigg)^{2}-\Bigg(\frac{\eta(2z)\eta(3z)\eta(7z)\eta(42z)}{\eta(z)\eta(6z)\eta(14z)\eta(21z)}\Bigg)^{2}.
\end{align}
\end{defin}
This function is a weakly holomorphic modular form of level $42$, weight $0$ whose
principal part starts with $q^{-1}$.
\begin{thm}
Let $e_{(7)}\in M_0^{!}(42)$ be defined as in (\ref{e7}).
\begin{enumerate}
\item[(1)] For $n\geq1$ the coefficients $a_{\mathcal{F}_0}(n)$ of
$\mathcal{F}_0(q)$ are given by
\begin{align*}
a_{\mathcal{F}_0}(n)=\frac{i}{2\sqrt{168n-1}}\big(\textup{tr}^+_{e_{(7)}}(1-168n,1)-\textup{tr}^-_{e_{(7)}}(1-168n,1)\big).
\end{align*}
\item[(2)] For $n\geq1$ the coefficients $a_{\mathcal{F}_1}(n)$ of
$\mathcal{F}_1(q)$ are given by
\begin{align*}
a_{\mathcal{F}_1}(n)=\frac{-i}{2\sqrt{168n-25}}\big(\textup{tr}^+_{e_{(7)}}(25-168n,5)-\textup{tr}^-_{e_{(7)}}(25-168n,5)\big).
\end{align*}
\item[(3)] For $n\geq1$ the coefficients $a_{\mathcal{F}_2}(n)$ of
$\mathcal{F}_2(q)$ are given by
\begin{align*}
a_{\mathcal{F}_2}(n)=\frac{-i}{2\sqrt{168(n+1)-121}}\big(\textup{tr}^+_{e_{(7)}}(121-168(n+1),11)-\textup{tr}^-_{e_{(7)}}(121-168(n+1),11)\big).
\end{align*}
\end{enumerate}
\end{thm}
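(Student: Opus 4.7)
The plan is to mirror the proof of Theorem~\ref{Formeln61}, with $\widetilde{H}_{(7)}$ in place of $\widetilde{H}_{(6),1}$ and $e_{(7)}$ in place of $e_{(6),1}$. By Lemma~\ref{7VolleWeilDarst}, $\widetilde{H}_{(7)}\in H^+_{1/2,\overline{\rho}_L}$ for the lattice $L=\mathbb{Z}$ of level $N=42$, so that $4N=168$. Using the standard $q$-expansions $\mathcal{F}_0(q)=1+O(q)$, $\mathcal{F}_1(q)=O(q)$ and $\mathcal{F}_2(q)=1+O(q)$, the only component of $F_{(7)}$ carrying a negative power of $q$ is $f_1=q^{-1/168}\mathcal{F}_0(q)$, which contributes $q^{-1/168}$; the components $f_{25}$ and $f_{121}$ begin in positive powers. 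Inserted into the defining formula for $\widetilde{H}_{(7)}$, this yields a principal part which is a finite $\mathbb{Z}$-linear combination of $q^{-1/168}\mathfrak{e}_r$ supported on those $r\in\mathbb{Z}/84\mathbb{Z}$ with $r^2\equiv 1\pmod{168}$, i.e.\ $r\equiv\pm 1,\pm 13,\pm 29,\pm 41\pmod{84}$, with explicit signs.

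Next, $e_{(7)}$ is a weakly holomorphic modular form of weight $0$ for $\Gamma_0(42)$ whose principal part at $\infty$ starts with $q^{-1}$. The two $\eta$-quotients in~(\ref{e7}) are swapped by the Fricke involution $W_{42}$ and are permuted by each $W_Q$ for $Q$ an exact divisor of $42$, so that $e_{(7)}$ is an eigenfunction of every Atkin-Lehner involution with eigenvalue $\pm 1$. Consequently, the Fourier expansions of $e_{(7)}$ at all cusps of $\Gamma_0(42)$ agree up to signs, and the principal part of the Millson theta lift $\mathcal{I}^M(e_{(7)},\tau)$ is likewise supported on $q^{-1/168}\mathfrak{e}_r$ with $r^2\equiv 1\pmod{168}$, the sign pattern being prescribed by those eigenvalues. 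I would then match this pattern with the one extracted from $\widetilde{H}_{(7)}$, possibly after a single rational scalar $c$, so that $\widetilde{H}_{(7)}-c\cdot\mathcal{I}^M(e_{(7)},\tau)$ has vanishing principal part.

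Since both summands lie in $H^+_{1/2,\overline{\rho}_L}$, their difference maps to a weight $\nicefrac{3}{2}$ cusp form under $\xi_{1/2}$. Lemma~\ref{XiSpitze} then forces the difference itself to be a cusp form, and, as $S_{1/2,\overline{\rho}_L}=\{0\}$ in the relevant setting, it vanishes. Hence $\widetilde{H}_{(7)}=c\cdot\mathcal{I}^M(e_{(7)},\tau)$, and Theorem~\ref{Alfes} gives the coefficient of $q^{(168n-r^2)/168}\mathfrak{e}_r$ in the right-hand side as $\tfrac{i}{\sqrt{168n-r^2}}\bigl(\textup{tr}^+_{e_{(7)}}(r^2-168n,r)-\textup{tr}^-_{e_{(7)}}(r^2-168n,r)\bigr)$. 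Reading this off at $r=1,5,11$ against the components of $\widetilde{H}_{(7)}$ built from $\mathcal{F}_0,\mathcal{F}_1,\mathcal{F}_2$ respectively, and absorbing $c$, produces the three stated formulas.

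The main obstacle I expect is the sign and scalar bookkeeping: determining the precise Atkin-Lehner eigenvalues of $e_{(7)}$ at every exact divisor of $42$, checking that the resulting sign pattern of the principal part of $\mathcal{I}^M(e_{(7)},\tau)$ really is (up to one global scalar) the one coming from $\widetilde{H}_{(7)}$, and fixing the constant $c$ from the normalising factors in the definition of $F_{(7)}$ together with the shape of Alfes's coefficient formula. Everything else parallels the order~$6$ argument.
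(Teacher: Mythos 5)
Your proposal follows essentially the paper's own (implicit) argument: the paper gives no separate proof for the order-$7$ theorem but treats it as analogous to Theorem~\ref{Formeln61}, i.e.\ matching the principal part of $\widetilde{H}_{(7)}$ (coming only from $f_1=q^{-1/168}\mathcal{F}_0$, supported on $\mathfrak{e}_r$ with $r\equiv\pm1,\pm13,\pm29,\pm41\pmod{84}$) with that of the Millson lift of the Atkin--Lehner eigenform $e_{(7)}$, then invoking Lemma~\ref{XiSpitze} together with $S_{1/2,\overline{\rho}_L}=\{0\}$ and reading off coefficients via Theorem~\ref{Alfes}. Your allowance for a scalar $c$ is exactly the needed bookkeeping (one finds $\mathcal{I}^M(e_{(7)},\tau)=2\,\widetilde{H}_{(7)}$, which is the source of the factor $\tfrac{1}{2}$ in the stated formulas, the signs coming from the minus signs in front of $f_{25}$ and $f_{121}$ in Lemma~\ref{7VolleWeilDarst}), so the proposal is correct and takes the same route as the paper.
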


\subsection{Mock Theta Functions of order $8$} \label{section mock theta functions of order 8}

\setcounter{MaxMatrixCols}{20}

We now turn to the mock theta functions $S_{0}$, $S_{1}$, $T_{0}$, $T_{1}$, $U_{0}$, $U_{1}$, $V_{0}$ and $V_{1}$ of order $8$. They have the following linear relations between them which are an easy consequence of the identities that are, e.g., given as (1.7) and (1.8) in \cite{gordon2000some}.

\begin{lemma} \label{lemma connection between eighth order U_0, S_0 and S_1 and U_1, T_0 and T_1}

We have
\begin{flalign*}
q^{-\frac{1}{32}} \ U_{0}(q^{\frac{1}{4}}) &= q^{-\frac{1}{32}} \ S_{0}(q^{\frac{1}{2}}) + q^{\frac{7}{32}} \ S_{1}(q^{\frac{1}{2}}), \\
q^{-\frac{1}{32}} \ U_{0}(-q^{\frac{1}{4}}) &= q^{-\frac{1}{32}} \ S_{0}(q^{\frac{1}{2}}) - q^{\frac{7}{32}} \ S_{1}(q^{\frac{1}{2}}), \\
q^{-\frac{1}{32}} \ U_{1}(q^{\frac{1}{4}}) &= q^{-\frac{1}{32}} \ T_{0}(q^{\frac{1}{2}}) + q^{\frac{7}{32}} \ T_{1}(q^{\frac{1}{2}}), \\
q^{-\frac{1}{32}} \ U_{1}(-q^{\frac{1}{4}}) &= q^{-\frac{1}{32}} \ T_{0}(q^{\frac{1}{2}}) - q^{\frac{7}{32}} \ T_{1}(q^{\frac{1}{2}}).
\end{flalign*}

\end{lemma}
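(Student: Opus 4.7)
The plan is to reduce the four stated identities to the two bisection identities (1.7) and (1.8) of \cite{gordon2000some}, which decompose $U_0$ and $U_1$ into their even/odd parts as
\begin{align*}
U_0(q) &= S_0(q^2) + q\, S_1(q^2), \\
U_1(q) &= T_0(q^2) + q\, T_1(q^2).
\end{align*}
These are purely formal $q$-series identities, obtained by splitting the defining $q$-hypergeometric series of $U_0$ and $U_1$ into the contributions of even-indexed and odd-indexed summation terms (with the factor of $q$ precisely accounting for the odd-index shift).

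The next step is to substitute $q \mapsto q^{1/4}$ in the first identity, so that $q^2$ becomes $q^{1/2}$, yielding
$$U_0(q^{1/4}) = S_0(q^{1/2}) + q^{1/4}\, S_1(q^{1/2}).$$
Multiplying both sides by $q^{-1/32}$ gives the first claimed equation, since $-\tfrac{1}{32} + \tfrac{1}{4} = \tfrac{7}{32}$. The second equation is obtained by the analogous substitution $q \mapsto -q^{1/4}$: here $(-q^{1/4})^2 = q^{1/2}$, so the arguments of $S_0$ and $S_1$ are unaffected while the linear factor $q^{1/4}$ picks up a minus sign, and multiplication by $q^{-1/32}$ again converts $-\tfrac{1}{32} + \tfrac{1}{4}$ into $\tfrac{7}{32}$. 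The two identities involving $U_1, T_0, T_1$ are established in exactly the same way starting from the bisection (1.8).

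The only potential obstacle is the bookkeeping of fractional $q$-exponents, which is entirely routine. No convergence or transformation argument is required, since everything takes place at the level of formal power series in $q^{1/32}$, and the lemma merely records how the $q \mapsto \pm q^{1/4}$ substitution interacts with the known even/odd decompositions of $U_0$ and $U_1$.
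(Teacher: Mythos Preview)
Your proof is correct and follows exactly the approach indicated in the paper: the lemma is an immediate consequence of the bisection identities (1.7) and (1.8) of \cite{gordon2000some} after the substitution $q\mapsto \pm q^{1/4}$ and multiplication by $q^{-1/32}$. There is nothing to add.
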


\begin{defin}
For $\tau \in \mathbb{H}$ we define the vector valued functions
$$F_{(8)}(\tau) := \left(\begin{matrix}
  V_{0}(q^{\frac{1}{2}}) \\
  V_{0}(-q^{\frac{1}{2}}) \\
  \sqrt{8} \ q^{-\frac{1}{8}} \ V_{1}(q^{\frac{1}{2}}) \\
  \sqrt{8} \ q^{-\frac{1}{8}} \ V_{1}(-q^{\frac{1}{2}}) \\
  \sqrt{2} \ q^{-\frac{1}{32}} \ S_{0}(q^{\frac{1}{2}}) \\
  \sqrt{2} \ q^{-\frac{1}{32}} \ S_{0}(-q^{\frac{1}{2}}) \\
  \sqrt{2} \ q^{\frac{7}{32}} \ S_{1}(q^{\frac{1}{2}}) \\
  \sqrt{2} \ q^{\frac{7}{32}} \ S_{1}(-q^{\frac{1}{2}}) \\
  \sqrt{8} \ q^{-\frac{1}{32}} \ T_{0}(q^{\frac{1}{2}}) \\
  \sqrt{8} \ q^{-\frac{1}{32}} \ T_{0}(-q^{\frac{1}{2}}) \\
  \sqrt{8} \ q^{\frac{7}{32}} \ T_{1}(q^{\frac{1}{2}}) \\
  \sqrt{8} \ q^{\frac{7}{32}} \ T_{1}(-q^{\frac{1}{2}}) 
\end{matrix}\right),$$

where $q=e^{2 \pi i \tau}$, and
$$G_{(8)}(\tau) := \frac{i}{\sqrt{8}} \ \int_{-\overline{\tau}}^{i \infty} \frac{g_{(8)}(z)}{\sqrt{-i(z+\tau)}} \ dz,$$

where $g_{(8)}$ is the vector $(g_{(8),0},\dots,g_{(8),11})^{T}$ with components
\begin{flalign*}
g_{(8),0}(z) &:= \sqrt{2} \ \theta_{8,4}(z), \\
g_{(8),1}(z) &:= -\sqrt{2} \ \theta_{8,4}(z), \\
g_{(8),2}(z) &:= \theta_{8,2}(z)+\theta_{8,6}(z), \\
g_{(8),3}(z) &:= \theta_{8,2}(z)+\theta_{8,6}(z), \\
g_{(8),4}(z) &:= -(\theta_{8,1}(z)-\theta_{8,7}(z)), \\
g_{(8),5}(z) &:= -(\theta_{8,1}(z)+\theta_{8,7}(z)), \\
g_{(8),6}(z) &:= \theta_{8,3}(z)-\theta_{8,5}(z), \\
g_{(8),7}(z) &:= -(\theta_{8,3}(z)+\theta_{8,5}(z)), \\
g_{(8),8}(z) &:= \theta_{8,1}(z)-\theta_{8,7}(z), \\
g_{(8),9}(z) &:= \theta_{8,1}(z)+\theta_{8,7}(z), \\
g_{(8),10}(z) &:= -(\theta_{8,3}(z)-\theta_{8,5}(z)), \\
g_{(8),11}(z) &:= \theta_{8,3}(z)+\theta_{8,5}(z).
\end{flalign*}
\end{defin}

Again the so defined functions have the same modular transformation properties. Considering the function $F_{(8)}-G_{(8)}$ leads to the following theorem:

\begin{thm} \label{theorem transformation properties of H_(8)}
The function $H_{(8)}$, defined for $\tau \in \mathbb{H}$ by
$$H_{(8)}(\tau):=F_{(8)}(\tau)-G_{(8)}(\tau) \ \ \ \ \ (\tau \in \mathbb{H})$$

is a vector valued harmonic Maass form of weight $\nicefrac{1}{2}$ for the metaplectic group $\emph{\text{Mp}}_{2}(\mathbb{Z})$. \\
For $\tau \in \mathbb{H}$ we have
\begin{equation}
H_{(8)}(\tau+1) = N_{(8)} \ H_{(8)}(\tau)
\end{equation}

and
\begin{equation}
 H_{(8)} \left(-\frac{1}{\tau} \right) = \sqrt{-i \tau} \ M_{(8)} \ H_{(8)}(\tau),
\end{equation}

where the transformation matrices $N_{(8)}$ and $M_{(8)}$ are defined as
$$N_{(8)} := \left(\begin{matrix}
  0 & 1 & 0 & 0 & 0 & 0 & 0 & 0 & 0 & 0 & 0 & 0 \\
  1 & 0 & 0 & 0 & 0 & 0 & 0 & 0 & 0 & 0 & 0 & 0 \\
  0 & 0 & 0 & \zeta_{8}^{-1} & 0 & 0 & 0 & 0 & 0 & 0 & 0 & 0 \\
  0 & 0 & \zeta_{8}^{-1} & 0 & 0 & 0 & 0 & 0 & 0 & 0 & 0 & 0 \\
  0 & 0 & 0 & 0 & 0 & \zeta_{32}^{-1} & 0 & 0 & 0 & 0 & 0 & 0 \\
  0 & 0 & 0 & 0 & \zeta_{32}^{-1} & 0 & 0 & 0 & 0 & 0 & 0 & 0 \\
  0 & 0 & 0 & 0 & 0 & 0 & 0 & \zeta_{32}^{7} & 0 & 0 & 0 & 0 \\
  0 & 0 & 0 & 0 & 0 & 0 & \zeta_{32}^{7} & 0 & 0 & 0 & 0 & 0 \\
  0 & 0 & 0 & 0 & 0 & 0 & 0 & 0 & 0 & \zeta_{32}^{-1} & 0 & 0 \\
  0 & 0 & 0 & 0 & 0 & 0 & 0 & 0 & \zeta_{32}^{-1} & 0 & 0 & 0 \\
  0 & 0 & 0 & 0 & 0 & 0 & 0 & 0 & 0 & 0 & 0 & \zeta_{32}^{7} \\
  0 & 0 & 0 & 0 & 0 & 0 & 0 & 0 & 0 & 0 & \zeta_{32}^{7} & 0
\end{matrix}\right)$$

and
$$M_{(8)} := \left(\begin{matrix}
  0 & 0 & 0 & 0 & \frac{1}{\sqrt{2}} & 0 & \frac{1}{\sqrt{2}} & 0 & 0 & 0 & 0 & 0 \\
  0 & 0 & 0 & 0 & 0 & 0 & 0 & 0 & \frac{1}{\sqrt{2}} & 0 & \frac{1}{\sqrt{2}} & 0 \\
  0 & 0 & 0 & 0 & \frac{1}{\sqrt{2}} & 0 & -\frac{1}{\sqrt{2}} & 0 & 0 & 0 & 0 & 0 \\
  0 & 0 & 0 & 0 & 0 & 0 & 0 & 0 & -\frac{1}{\sqrt{2}} & 0 & \frac{1}{\sqrt{2}} & 0 \\
  \frac{1}{\sqrt{2}} & 0 & \frac{1}{\sqrt{2}} & 0 & 0 & 0 & 0 & 0 & 0 & 0 & 0 & 0 \\
  0 & 0 & 0 & 0 & 0 & 0 & 0 & 0 & 0 & \frac{\sqrt{2-\sqrt{2}}}{2} & 0 & \frac{\sqrt{2+\sqrt{2}}}{2} \\
  \frac{1}{\sqrt{2}} & 0 & -\frac{1}{\sqrt{2}} & 0 & 0 & 0 & 0 & 0 & 0 & 0 & 0 & 0 \\
  0 & 0 & 0 & 0 & 0 & 0 & 0 & 0 & 0 & \frac{\sqrt{2+\sqrt{2}}}{2} & 0 & -\frac{\sqrt{2-\sqrt{2}}}{2} \\
  0 & \frac{1}{\sqrt{2}} & 0 & -\frac{1}{\sqrt{2}} & 0 & 0 & 0 & 0 & 0 & 0 & 0 & 0 \\
  0 & 0 & 0 & 0 & 0 & \frac{\sqrt{2-\sqrt{2}}}{2} & 0 & \frac{\sqrt{2+\sqrt{2}}}{2} & 0 & 0 & 0 & 0 \\
  0 & \frac{1}{\sqrt{2}} & 0 & \frac{1}{\sqrt{2}} & 0 & 0 & 0 & 0 & 0 & 0 & 0 & 0 \\
  0 & 0 & 0 & 0 & 0 & \frac{\sqrt{2+\sqrt{2}}}{2} & 0 & -\frac{\sqrt{2-\sqrt{2}}}{2} & 0 & 0 & 0 & 0
\end{matrix}\right).$$
\end{thm}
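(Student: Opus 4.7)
The plan is to mirror, step for step, the argument carried out in detail for $H_{(6),j}$ in subsection \ref{section mock theta functions of order 6}.

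\textbf{Step 1 (modular behaviour of $F_{(8)}$).} The identity $F_{(8)}(\tau+1)=N_{(8)}F_{(8)}(\tau)$ drops out from a direct substitution; the permutations of pairs and the individual roots of unity come from the half-integral powers of $q$ appearing in the definition of $F_{(8)}$, together with the flip $q^{1/2}\leftrightarrow -q^{1/2}$. The $S$-transformation is the delicate step and requires the non-holomorphic transformation formulas for $S_0,S_1,T_0,T_1,V_0,V_1$ recorded in \cite{gordon2000some} (or more compactly in \cite{gordon2012survey}). Inserting these formulas with the appropriate auxiliary variable $\alpha$ (namely $\alpha=\pi/\tau$ for the $V$-components and $\alpha=\pi/(2\tau)$ for the $S$- and $T$-components), one obtains an identity of the form
$$\frac{1}{\sqrt{-i\tau}}\,F_{(8)}\!\left(-\tfrac{1}{\tau}\right)=M_{(8)}\,F_{(8)}(\tau)+R_{(8)}(\tau),$$
where $R_{(8)}(\tau)$ is a column vector of Mordell-type integrals, in direct analogy with $R_{(6)}$ from Lemma \ref{lemma transformation properties of F_(6),1}.

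\textbf{Step 2 (theta-integral representation of $R_{(8)}$).} Following Lemma \ref{lemma integral representation of R_(6)}, I would rewrite $R_{(8)}$ as a vector-valued Eichler integral of theta functions. Setting $\tau=it$ with $t>0$ and combining the partial fraction decomposition of $1/\cosh(\pi y)$ with the integral identity from \cite{zwegers2001mock}, Lemma~1.18, each Mordell integral turns into an integral of the form $\int_0^{i\infty}(\cdot)/\sqrt{-i(z-\tfrac{1}{it})}\,dz$. Regrouping the resulting exponentials along residue classes modulo $16$ into the unary theta series $\theta_{8,a}$ for $a\in\{1,\ldots,7\}$, and appealing to the identity theorem to extend back from the imaginary axis to all of $\mathbb{H}$, one obtains
$$R_{(8)}(\tau)=\frac{i^{3/2}}{\sqrt{8}}\int_0^{i\infty}\frac{g_{(8)}(z)}{\sqrt{-i(z\tau-1)}}\,dz,$$
with $g_{(8)}$ exactly as in the definition of $G_{(8)}$.

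\textbf{Step 3 (compensating behaviour of $G_{(8)}$ and harmonicity).} Using \eqref{transformation of theta_N,a under T} with $N=8$ inside the defining integral gives $G_{(8)}(\tau+1)=N_{(8)}\,G_{(8)}(\tau)$, while \eqref{transformation of theta_N,a under S} with $N=8$, combined with the change of variables $u=-1/z$, yields
$$\frac{1}{\sqrt{-i\tau}}\,G_{(8)}\!\left(-\tfrac{1}{\tau}\right)-M_{(8)}\,G_{(8)}(\tau)=R_{(8)}(\tau).$$
Subtracting from the corresponding identities for $F_{(8)}$ shows that $H_{(8)}=F_{(8)}-G_{(8)}$ satisfies the two transformation laws displayed in the theorem. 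Harmonicity is then immediate: $F_{(8)}$ is holomorphic, and $\Delta_{1/2}G_{(8)}=0$ because $\xi_{1/2}(G_{(8)})$ is a constant multiple of the holomorphic weight $\nicefrac{3}{2}$ vector $g_{(8)}$. Linear exponential growth at the sole cusp of $\text{Mp}_2(\mathbb{Z})$ is inherited from the boundedness of the mock theta functions as $v\to\infty$ and the rapid decay of $g_{(8)}$.

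\textbf{Main obstacle.} The real work lies in Step 2. There are twelve components of $R_{(8)}$ to match against twelve prescribed linear combinations of the seven theta series $\theta_{8,a}$, $a=1,\ldots,7$, and each matching requires its own partial fraction manipulation together with a combinatorial rearrangement of the sums. In particular, the components produced by the Gordon--McIntosh formulas for $T_0$ and $T_1$, which mix $\theta_{8,3}$ and $\theta_{8,5}$ with opposing signs, and the $V_1$-components involving $\theta_{8,2}+\theta_{8,6}$ (where the naive Mordell integrals come out with the \emph{same} sign on both sheets of the $\pm q^{1/2}$ flip) require particularly careful bookkeeping. The order~6 calculations worked out in \cite{klein2018mock} provide a complete template; adapting them to the order~8 data is technically involved but conceptually routine.
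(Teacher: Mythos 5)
Your plan is exactly the paper's approach: the paper omits the order~$8$ proof and notes it proceeds analogously to the order~$6$ case, i.e.\ deriving the $T$- and $S$-transformations of $F_{(8)}$ from the Gordon--McIntosh formulas with a Mordell-integral error term $R_{(8)}$, rewriting $R_{(8)}$ as a period integral of $g_{(8)}$ via partial fractions and the identity from \cite{zwegers2001mock}, and checking that $G_{(8)}$ produces the compensating error term, which is precisely your Steps 1--3. Apart from minor bookkeeping details (e.g.\ the exact choices of the auxiliary variable $\alpha$), this matches the paper's argument, so nothing further is needed.
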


\begin{cor} \label{corollary xi-image order 8}
We have $\xi_{1/2}(H_{(8)})(\tau)=-\frac{1}{2} \ g_{(8)}(\tau)$.
\end{cor}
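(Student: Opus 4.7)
The plan is to mimic the $\xi$-image computations that implicitly underlie Corollaries \ref{corollary xi-image order 6}, \ref{corollary xi-image order 2}, and \ref{corollary xi-image order 3}. Since $H_{(8)} = F_{(8)} - G_{(8)}$ and $F_{(8)}$ is holomorphic on $\mathbb{H}$, we have $\partial_{\bar\tau} F_{(8)} = 0$, so it suffices to compute $\xi_{1/2}(-G_{(8)})$. Concretely I will apply the definition
$$\xi_{1/2}(H_{(8)})(\tau) = 2i v^{1/2} \overline{\partial_{\bar\tau} H_{(8)}(\tau)} = -2i v^{1/2}\overline{\partial_{\bar\tau}G_{(8)}(\tau)}$$
and reduce the question to differentiating the Eichler-type integral
$$G_{(8)}(\tau) = \frac{i}{\sqrt{8}} \int_{-\bar\tau}^{i\infty} \frac{g_{(8)}(z)}{\sqrt{-i(z+\tau)}}\,dz$$
with respect to $\bar\tau$.

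The key observation is that the integrand depends holomorphically on $\tau$ (not on $\bar\tau$), so the entire $\bar\tau$-dependence comes from the lower limit $-\bar\tau$. By the Leibniz rule,
$$\partial_{\bar\tau}G_{(8)}(\tau) = \frac{i}{\sqrt{8}}\cdot\frac{g_{(8)}(-\bar\tau)}{\sqrt{-i(-\bar\tau+\tau)}}.$$
Using $\tau - \bar\tau = 2iv$ gives $\sqrt{-i(\tau-\bar\tau)} = \sqrt{2v}$, and hence
$$\partial_{\bar\tau}G_{(8)}(\tau) = \frac{i}{4\sqrt{v}}\,g_{(8)}(-\bar\tau).$$
Substituting into $\xi_{1/2}(H_{(8)}) = -2iv^{1/2}\,\overline{\partial_{\bar\tau}G_{(8)}}$ and simplifying the prefactor yields $-\tfrac{1}{2}\,\overline{g_{(8)}(-\bar\tau)}$.

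Finally I need to identify $\overline{g_{(8)}(-\bar\tau)}$ with $g_{(8)}(\tau)$. Each component of $g_{(8)}$ is a $\mathbb{Z}$-linear combination of the unary theta functions $\theta_{8,a}$, which have the form $\sum_{n\equiv a\,(2N)} n\,e^{2\pi i z n^{2}/(4N)}$ with real Fourier coefficients. Thus
$$\overline{\theta_{8,a}(-\bar\tau)} = \sum_{n\equiv a\,(16)} n\,\overline{e^{-2\pi i\bar\tau n^{2}/32}} = \sum_{n\equiv a\,(16)} n\,e^{2\pi i\tau n^{2}/32} = \theta_{8,a}(\tau),$$
and the same identity holds componentwise for $g_{(8)}$. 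This establishes $\xi_{1/2}(H_{(8)})(\tau) = -\tfrac{1}{2}\,g_{(8)}(\tau)$.

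No step is truly an obstacle: the computation is routine once the Leibniz-rule trick for the lower limit is applied, and the only subtlety is keeping track of the factors of $i$, $\sqrt{8}$ and $\sqrt{2v}$ coming from the normalizing constant in front of the Eichler integral. The same recipe produced Corollaries \ref{corollary xi-image order 6}, \ref{corollary xi-image order 2}, and \ref{corollary xi-image order 3}, where the prefactors $-\tfrac{1}{\sqrt{12}}$, $-1$, and $-\tfrac{1}{\sqrt{12}}$ arise precisely from the choice of normalization in the corresponding definitions of $G_{(6)}$, $G_{(2)}$ and $G_{(3)}$; here the normalization $\tfrac{i}{\sqrt{8}}$ in $G_{(8)}$ is what produces the factor $-\tfrac{1}{2}$.
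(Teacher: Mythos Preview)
Your proof is correct and is exactly the standard computation that the paper tacitly relies on for this and the analogous corollaries; the paper gives no explicit argument, and your application of the Leibniz rule to the lower limit of the Eichler integral together with the identity $\overline{\theta_{8,a}(-\bar\tau)}=\theta_{8,a}(\tau)$ is precisely what is needed.
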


In the following we consider the congruence subgroup
$$\Gamma(8) = \left\{\left(\begin{matrix}
  a & b \\
  c & d \\
\end{matrix}\right) \in \text{SL}_{2}(\mathbb{Z}) \ \Big| \ b \equiv c \equiv 0 \ (8), \ a \equiv d \equiv 1 \ (8) \right\}.$$

This leads to:

\begin{thm} \label{theorem order 8 scalar valued harmonic Maass forms}
The components of the vector valued harmonic weak Maass form $H_{(8)}$ are scalar valued harmonic weak Maass forms of weight $\nicefrac{1}{2}$ for the subgroup
$$\{(\gamma,\phi) \in \emph{\text{Mp}}_{2}(\mathbb{Z}) \ | \ \gamma \in \Gamma(8) \}$$

of the metaplectic group $\emph{\text{Mp}}_{2}(\mathbb{Z})$.
\end{thm}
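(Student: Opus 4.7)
The plan is to mirror the strategy used in the proof of Theorem \ref{theorem order 6 scalar valued harmonic Maass forms}. Theorem \ref{theorem transformation properties of H_(8)} already records the action of $H_{(8)}$ under the two generators $T$ and $S$ of $\text{Mp}_{2}(\mathbb{Z})$ in explicit matrix form, so the task reduces to verifying that the induced action of every element of $\Gamma(8)$ is by a \emph{diagonal} matrix; this is precisely the condition that the $12$ components of $H_{(8)}$ do not get mixed with one another.

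First, I would produce a finite generating set $\{\gamma_{1},\dots,\gamma_{m}\}$ of $\Gamma(8)$, together with a word $w_{i}(S,T)$ in the standard generators of $\text{SL}_{2}(\mathbb{Z})$ representing each $\gamma_{i}$. Since $[\text{SL}_{2}(\mathbb{Z}):\Gamma(8)]=384$ is finite, a Reidemeister--Schreier procedure, as implemented in Sage \cite{sagemath}, outputs both simultaneously. Lifting each pair $(\gamma_{i},w_{i})$ to a single element of $\text{Mp}_{2}(\mathbb{Z})$ via the chosen word is unproblematic.

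Second, using Theorem \ref{theorem transformation properties of H_(8)} I would compute, for each $\gamma_{i}$, the transformation matrix $A_{i}$ obtained as the ordered product of $N_{(8)}$'s and $M_{(8)}$'s dictated by the word $w_{i}$, taking care of the accumulated automorphy factors. I would then check that every resulting $A_{i}$ is diagonal. Granting this, each component $h_{(8),\ell}$ of $H_{(8)}$ satisfies
\begin{equation*}
h_{(8),\ell}(\gamma_{i}\tau) \;=\; \phi_{\gamma_{i}}(\tau) \cdot (A_{i})_{\ell\ell} \cdot h_{(8),\ell}(\tau)
\end{equation*}
on every generator and hence on all of $\Gamma(8)$. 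Together with $\Delta_{1/2}h_{(8),\ell}=0$ and the linear exponential growth at the cusps, both directly inherited from $H_{(8)}$, this yields that $h_{(8),\ell}$ is a scalar-valued harmonic weak Maass form of weight $\nicefrac{1}{2}$ for the stated metaplectic subgroup.

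The main obstacle is purely computational. With index $384$ the Schreier generating set will be sizeable, and the entries of $N_{(8)}$ and $M_{(8)}$ involve $32$-nd roots of unity as well as $\sqrt{2\pm\sqrt{2}}$, so multiplying out $12\times 12$ matrices along long words in $S$ and $T$ is tedious but entirely mechanical. The conceptual reason the verification must succeed is that $N_{(8)}$ and $M_{(8)}$ respect a common block decomposition of $\mathbb{C}^{12}$ into invariant subspaces of dimension at most two, and the level $8$ is precisely what is needed in order for the relations cutting out $\Gamma(8)$ inside $\text{SL}_{2}(\mathbb{Z})$ to force every $A_{i}$ to be diagonal; as in subsection \ref{section mock theta functions of order 6}, the bulk of this verification is delegated to Sage.
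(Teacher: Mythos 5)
Your proposal matches the paper's own argument: for the order-6 case (and, as stated, analogously for order 8) the paper uses Sage to find generators of the principal congruence subgroup, writes them as words in $S$ and $T$, multiplies the corresponding transformation matrices from the completion theorem, and observes that all resulting matrices are diagonal, so the components transform individually. Your outline, including the index computation $[\text{SL}_2(\mathbb{Z}):\Gamma(8)]=384$ and the inheritance of harmonicity and cusp growth by the components, is a correct rendering of exactly this approach.
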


\begin{rem}
As before, the $\xi$-images of the harmonic weak Maass forms in Theorem \ref{theorem order 8 scalar valued harmonic Maass forms} can be directly obtained from Corollary \ref{corollary xi-image order 8}.
\end{rem}

Finally we consider the yet omitted mock theta functions $U_{0}$ and $U_{1}$. Using their relations to $S_{0},S_{1},T_{0}$ and $T_{1}$ in Lemma \ref{lemma connection between eighth order U_0, S_0 and S_1 and U_1, T_0 and T_1} and denoting the components of $H_{(8)}$ by  $h_{(8),0},\dots,h_{(8),11}$ gives us
\begin{flalign*}
h_{(8),4}(\tau) \pm h_{(8),6}(\tau) &= q^{-\frac{1}{32}} \ U_{0}( \pm q^{\frac{1}{4}}) + \frac{i}{4} \ \int_{-\overline{\tau}}^{i \infty} \frac{\theta_{8,1}(z) \mp \theta_{8,3}(z) \pm \theta_{8,5}(z)-\theta_{8,7}(z)}{\sqrt{-i(z+\tau)}} \ dz, \\
h_{(8),8}(\tau) \pm h_{(8),10}(\tau) &= q^{-\frac{1}{32}} \ U_{1}(\pm q^{\frac{1}{4}}) + \frac{i}{8} \ \int_{-\overline{\tau}}^{i \infty} \frac{-\theta_{8,1}(z) \pm \theta_{8,3}(z) \mp \theta_{8,5}(z)+\theta_{8,7}(z)}{\sqrt{-i(z+\tau)}} \ dz.
\end{flalign*}

It can be shown via Sage \cite{sagemath} that $h_{(8),4}$ and $h_{(8),6}$ have the same transformation behaviour under all generators of $\Gamma(8)$, and also the two functions $h_{(8),8}$ and $h_{(8),10}$ have the same transformation properties under all generators of $\Gamma(8)$. From this and Theorem \ref{theorem order 8 scalar valued harmonic Maass forms} we can conclude:

\begin{thm} \label{theorem additional order 8 scalar valued harmonic Maass forms}
The functions $h_{(8),4} \pm h_{(8),6}$ and $h_{(8),8} \pm h_{(8),10}$ are scalar valued harmonic weak Maass forms of weight $\nicefrac{1}{2}$ for the subgroup
$$\{(\gamma,\phi) \in \emph{\text{Mp}}_{2}(\mathbb{Z}) \ | \ \gamma \in \Gamma(8) \}$$

of the metaplectic group $\emph{\text{Mp}}_{2}(\mathbb{Z})$.
\end{thm}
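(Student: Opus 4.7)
My plan is to reduce the theorem to a statement about multiplier systems: by Theorem \ref{theorem order 8 scalar valued harmonic Maass forms} each component $h_{(8),j}$ is already known to be a scalar valued harmonic weak Maass form of weight $\nicefrac{1}{2}$ for the metaplectic lift of $\Gamma(8)$, hence carries its own multiplier $\chi_j:\{(\gamma,\phi)\in\text{Mp}_2(\mathbb{Z})\mid\gamma\in\Gamma(8)\}\to\mathbb{C}^\times$. The harmonicity condition iii) and the linear exponential growth condition ii) from subsection \ref{subsection harmonic maass forms and the xi-operator} are preserved under pointwise linear combinations, so the only nontrivial thing to check is the modularity condition i) for the sums and differences. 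This reduces to verifying the identities $\chi_4=\chi_6$ and $\chi_8=\chi_{10}$ on the metaplectic cover of $\Gamma(8)$.

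To check those identities, I would follow exactly the computational strategy already used in the proof of Theorem \ref{theorem order 6 scalar valued harmonic Maass forms}. Using Sage \cite{sagemath} I would produce a finite set of generators $\gamma_1,\dots,\gamma_m$ of $\Gamma(8)$, decompose each $\gamma_k$ into a word in $S$ and $T$, and then form the corresponding product of the matrices $M_{(8)}$ and $N_{(8)}$ from Theorem \ref{theorem transformation properties of H_(8)} in the same order. Theorem \ref{theorem order 8 scalar valued harmonic Maass forms} guarantees that every such product matrix is diagonal, so the multipliers $\chi_j(\gamma_k)$ may simply be read off as the $(j,j)$ diagonal entries. Equality of the $(4,4)$- and $(6,6)$-entries and of the $(8,8)$- and $(10,10)$-entries across all $k$ is then an entirely mechanical check.

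Once both pairs of multipliers are seen to coincide, the functions $h_{(8),4}\pm h_{(8),6}$ transform under the metaplectic lift of $\Gamma(8)$ with the common multiplier $\chi_4=\chi_6$, and $h_{(8),8}\pm h_{(8),10}$ with the common multiplier $\chi_8=\chi_{10}$. Combined with harmonicity and the growth behaviour inherited componentwise from $H_{(8)}$, this gives the theorem. It is also worth recording, via the relations in Lemma \ref{lemma connection between eighth order U_0, S_0 and S_1 and U_1, T_0 and T_1} together with the explicit formulas written out just before the theorem, that the resulting scalar forms indeed have $q^{-1/32}U_0(\pm q^{1/4})$, respectively $q^{-1/32}U_1(\pm q^{1/4})$, as their holomorphic parts, thereby completing the mock theta functions $U_0$ and $U_1$.

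The main obstacle is computational rather than conceptual. The matrices $N_{(8)}$ and $M_{(8)}$ are $12\times 12$ with nontrivial off-diagonal structure, and typical generators of $\Gamma(8)$ expressed in $S,T$ lead to long words, so by-hand verification is impractical; the theorem effectively asserts that the Sage computation produces matching diagonal entries at positions $(4,4),(6,6)$ and $(8,8),(10,10)$, a coincidence whose structural origin is precisely the linear identities of Lemma \ref{lemma connection between eighth order U_0, S_0 and S_1 and U_1, T_0 and T_1} relating $U_0,U_1$ to $S_0,S_1,T_0,T_1$.
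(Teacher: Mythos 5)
Your proposal matches the paper's argument: the paper likewise takes the componentwise result of Theorem \ref{theorem order 8 scalar valued harmonic Maass forms} as given and then verifies via a Sage computation (generators of $\Gamma(8)$ decomposed into words in $S$ and $T$, with the corresponding products of $N_{(8)}$ and $M_{(8)}$) that $h_{(8),4}$ and $h_{(8),6}$, respectively $h_{(8),8}$ and $h_{(8),10}$, have identical transformation behaviour, so that the sums and differences inherit modularity, harmonicity and the growth condition. Your reduction to equality of multipliers and the mechanical check of the relevant diagonal entries is exactly this argument, so the proposal is correct and essentially identical to the paper's proof.
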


With the treatment of $U_{0}$ and $U_{1}$ we have now related all eighth order mock theta functions to scalar valued harmonic weak Maass forms.

\begin{rem}
We get the $\xi$-images of the harmonic weak Maass forms in Theorem \ref{theorem additional order 8 scalar valued harmonic Maass forms} from Corollary \ref{corollary xi-image order 8} by adding and subtracting the respective components of $\xi_{1/2}(H_{(8)})(\tau)$.
\end{rem}

\subsection{Mock Theta Functions of order $10$} \label{section mock theta functions of
order 10}
The necessary completion and its transformation behaviour has already been studied
by Moore in \cite{moore2012modular}. We consider the matrices
\begin{align}\label{N10}
N_{(10)}:=\begin{pmatrix}
0&0&\zeta_{10}&0&0&0\\0&0&0&\zeta_{10}^{-1}&0&0\\\zeta_{10}&0&0&0&0&0\\0&\zeta_{10}^{-1}&0&0&0&0\\0&0&0&0&\zeta_{40}^{-1}&0\\0&0&0&0&0&\zeta_{40}^{-9}
\end{pmatrix}
\end{align}
and
\begin{align}\label{M10}
M_{(10)}:=\begin{pmatrix}
0&0&0&0&\sin(\frac{2\pi}{5})&-\sin(\frac{\pi}{5})\\0&0&0&0&\sin(\frac{\pi}{5})&\sin(\frac{2\pi}{5})\\0&0&\sin(\frac{2\pi}{5})&\sin(\frac{\pi}{5})&0&0\\0&0&\sin(\frac{\pi}{5})&-\sin(\frac{2\pi}{5})&0&0\\\sin(\frac{2\pi}{5})&\sin(\frac{\pi}{5})&0&0&0&0\\-\sin(\frac{\pi}{5})&\sin(\frac{2\pi}{5})&0&0&0&0
\end{pmatrix}.
\end{align}
\begin{thm}[\cite{moore2012modular}, Theorem 1]
The vector
$$F_{(10)}(\tau)=\left(\begin{matrix}q^{\frac{1}{10}}\ \phi(q^{\frac{1}{2}})\\ q^{-\frac{1}{10}}\ \psi(q^{\frac{1}{2}})\\ q^{\frac{1}{10}}\ \phi(-q^{\frac{1}{2}})\\ q^{-\frac{1}{10}}\ \psi(-q^{\frac{1}{2}})\\ q^{-\frac{1}{40}}\ X(q)\\ q^{-\frac{9}{40}}\ \chi(q) \end{matrix}\right)$$
is the holomorphic part of $H_{(10)}=(h_{(10),0},h_{(10),1},h_{(10),2},h_{(10),3},h_{(10),4},h_{(10),5})^T\in H_{1/2}^+$, which is a harmonic weak Maass form of weight $\nicefrac{1}{2}$, transforming as
\begin{align*}
H_{(10)}(\tau+1)=N_{(10)} \ H_{(10)}(\tau)
\end{align*}
and
\begin{align*}
H_{(10)}\bigg(-\frac{1}{\tau}\bigg)=\sqrt{-i\tau} \ \frac{2}{\sqrt{5}} \ M_{(10)} \ H_{(10)}(\tau),
\end{align*}
where the matrices $N_{(10)}$ and $M_{(10)}$ are defined as in (\ref{N10}) and (\ref{M10}).
\end{thm}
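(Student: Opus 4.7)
The plan is to follow exactly the strategy that was laid out in subsection 3.1 for order 6. First I would establish the $T$-transformation by direct substitution: inserting $\tau+1$ into each component multiplies $q^{1/10}\phi(\pm q^{1/2})$, $q^{-1/10}\psi(\pm q^{1/2})$, $q^{-1/40}X(q)$ and $q^{-9/40}\chi(q)$ by the appropriate roots of unity and exchanges the sign-twisted pairs, which reads off immediately as the matrix $N_{(10)}$.

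The harder step is the $S$-transformation. For this I would invoke the classical transformation formulas for the four tenth order mock theta functions $\phi,\psi,X,\chi$ under $\alpha\mapsto \pi^2/\alpha$ (these are worked out in the survey \cite{gordon2012survey} and originally in Moore \cite{moore2012modular}); choosing $\alpha$ in terms of $\tau$ in two different scalings (one for $\phi,\psi$ evaluated at $\pm q^{1/2}$ and another for $X,\chi$ evaluated at $q$) will express $F_{(10)}(-1/\tau)$ as $\sqrt{-i\tau}\cdot(2/\sqrt{5})\,M_{(10)}\,F_{(10)}(\tau)$ modulo a vector $R_{(10)}(\tau)$ of Mordell-type integrals analogous to the $J,K,J_1,K_1$ appearing in Lemma 3.3. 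I would then rewrite $R_{(10)}$ as an Eichler integral of unary theta series of level $N=10$, in full analogy with Lemma 3.4: this uses the partial fraction expansion
$$\frac{1}{\cosh(\pi y)}=-\frac{i}{\pi}\sum_{n\in\mathbb{Z}}\frac{1}{y-i(2n+\tfrac12)}-\frac{i}{\pi}\sum_{n\in\mathbb{Z}}\frac{1}{-y-i(2n+\tfrac12)}$$
together with the integral identity from \cite{zwegers2001mock}, Lemma 1.18, to identify the Mordell integrals with an integral of a suitable linear combination of $\theta_{10,a}(z)$ against the Eichler kernel $1/\sqrt{-i(z\tau-1)}$.

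With the integral representation of $R_{(10)}$ in hand I would define a non-holomorphic completion
$$G_{(10)}(\tau):=\frac{i}{\sqrt{20}}\int_{-\overline{\tau}}^{i\infty}\frac{g_{(10)}(z)}{\sqrt{-i(z+\tau)}}\,dz$$
for the appropriate theta vector $g_{(10)}$, and verify, exactly as in the order 6 case, that $G_{(10)}$ satisfies the same modular transformation law as $F_{(10)}$ via the standard transformation of $\theta_{N,a}$ in equations (\ref{transformation of theta_N,a under T}) and (\ref{transformation of theta_N,a under S}), combined with an integral transformation in the variable of integration. Setting $H_{(10)}:=F_{(10)}-G_{(10)}$ then transforms with the matrices $N_{(10)}$ and $(2/\sqrt{5})M_{(10)}$. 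Harmonicity is automatic: $F_{(10)}$ is holomorphic and $G_{(10)}$ is a Mordell-type Eichler integral whose $\overline\partial$-derivative is a weight $3/2$ theta series, hence $\Delta_{1/2}G_{(10)}=0$, and the growth condition at the cusps is the mild linear-exponential growth built into the definition of $F_{(10)}$.

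The main obstacle is the bookkeeping in the $S$-transformation step: the tenth order functions $X$ and $\chi$ do not mix with $\phi,\psi$ under $\tau\mapsto-1/\tau$ (this is reflected in the block structure of $M_{(10)}$), so one must carefully track two independent families of Mordell integrals, and then guess the correct $\mathbb{Z}$-linear combinations of $\theta_{10,a}$ whose Eichler integrals reproduce them. Once the correct vector $g_{(10)}$ is identified, everything else is a direct transcription of the order 6 argument.
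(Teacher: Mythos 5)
The paper itself offers no proof of this statement: it is quoted verbatim from Moore \cite{moore2012modular}, and the completion is simply imported. Your overall plan --- the Gordon--McIntosh/Moore transformation formulas under $\tau\mapsto-1/\tau$, an error vector $R_{(10)}$ of Mordell integrals, the partial-fraction identity together with Lemma 1.18 of \cite{zwegers2001mock} to rewrite $R_{(10)}$ as an Eichler integral of unary theta functions $\theta_{10,a}$, and the completion $H_{(10)}=F_{(10)}-G_{(10)}$ --- is exactly the template of subsection 3.1 and of Moore's own argument, so as a strategy it is the right one.

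However, your parenthetical structural claim is wrong, and it is precisely the piece of bookkeeping that matters. You assert that $X$ and $\chi$ do not mix with $\phi,\psi$ under $\tau\mapsto-1/\tau$ and that this is ``reflected in the block structure of $M_{(10)}$''. The stated matrix says the opposite: rows $0,1$ of $M_{(10)}$ have their nonzero entries in columns $4,5$, and rows $4,5$ in columns $0,1$, so $q^{\frac{1}{10}}\phi(q^{\frac{1}{2}})$ and $q^{-\frac{1}{10}}\psi(q^{\frac{1}{2}})$ transform under $S$ into combinations of the $X$- and $\chi$-components and vice versa; only the pair $\phi(-q^{\frac{1}{2}}),\psi(-q^{\frac{1}{2}})$ (components $2,3$) closes on itself. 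This mixing is the whole point: it is the reason $X$ and $\chi$ must be carried in the vector at all, it parallels the order $5$ and order $6$ cases (in $M_{(6)}$ the functions at argument $q$ likewise mix under $S$ with those at argument $q^{\frac{1}{2}}$), and it dictates which Mordell integrals must be matched with which combinations of $\theta_{10,a}$ when you identify the shadow vector $g_{(10)}$. Carried out under your stated assumption, the computation would aim at a block-diagonal matrix incompatible with the $M_{(10)}$ in the theorem. The remaining points of the sketch (the $T$-transformation by direct substitution, harmonicity because $\xi_{1/2}G_{(10)}$ is proportional to a holomorphic weight $\nicefrac{3}{2}$ theta vector, growth at the single cusp of $\mathrm{Mp}_2(\mathbb{Z})$) are fine, with the normalizing constant of $G_{(10)}$ to be fixed only after $g_{(10)}$ is pinned down.
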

The following result is a simple consequence from the statement above.
\begin{lemma}
The function
\begin{align*}
\widetilde{H}_{(10)}:&=(h_{(10),0}+h_{(10),2}) \ [-\mathfrak{e}_6+\mathfrak{e}_{-6}]+(h_{(10),0}-h_{(10),2}) \ [-\mathfrak{e}_4+\mathfrak{e}_{-4}] \\
&+(h_{(10),1}+h_{(10),3}) \ [-\mathfrak{e}_2+\mathfrak{e}_{-2}]+(h_{(10),1}-h_{(10),3}) \ [-\mathfrak{e}_8+\mathfrak{e}_{-8}] \\
&+h_{(10),4} \ [\mathfrak{e}_1-\mathfrak{e}_{-1}-\mathfrak{e}_9+\mathfrak{e}_{-9}]+h_{(10),5} \ [\mathfrak{e}_3-\mathfrak{e}_{-3}-\mathfrak{e}_7+\mathfrak{e}_{-7}]
\end{align*}
transforms with respect to the dual Weil representation $\overline{\rho}_L$ of weight $\nicefrac{1}{2}$ and level $N=10$.
\end{lemma}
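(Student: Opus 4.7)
The plan is to verify directly that $\widetilde{H}_{(10)}$ satisfies the two defining transformation formulas of $\overline{\rho}_L$ for $N=10$, namely $\overline{\rho}_L(T)\mathfrak{e}_r = e(-r^2/40)\mathfrak{e}_r$ and the explicit Fourier-type formula for $\overline{\rho}_L(S)$. Since we already know from the preceding theorem that $H_{(10)}$ transforms via the explicit matrices $N_{(10)}$ and $M_{(10)}$, the verification reduces to a purely linear-algebraic computation in $\mathbb{C}[\mathbb{Z}/20\mathbb{Z}]$. The approach is entirely analogous to the $S$-matrix computation we carried out for the order 6 case, so I would model the bookkeeping accordingly.

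For the action of $T$, I would first observe that the matrix $N_{(10)}$ swaps $h_{(10),0}\leftrightarrow h_{(10),2}$ with factor $\zeta_{10}$ and $h_{(10),1}\leftrightarrow h_{(10),3}$ with factor $\zeta_{10}^{-1}$, while acting diagonally on $h_{(10),4},h_{(10),5}$. Consequently the symmetric combinations $h_{(10),0}+h_{(10),2}$ and $h_{(10),1}+h_{(10),3}$ pick up $\zeta_{10}$ and $\zeta_{10}^{-1}$, while the antisymmetric ones pick up $-\zeta_{10}=\zeta_{10}^{-4}$ and $-\zeta_{10}^{-1}=\zeta_{10}^{4}$ (using $\zeta_{10}^5=-1$). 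One then checks in each of the eight pairs $\mathfrak{e}_{\pm r}$ of basis vectors appearing in $\widetilde{H}_{(10)}$ that the resulting multiplier equals $e(-r^2/40)$: for $r=\pm 6,\pm 4,\pm 2,\pm 8$ this uses the identities just mentioned, and for $r=\pm 1,\pm 9$ (respectively $r=\pm 3,\pm 7$) the diagonal factors $\zeta_{40}^{-1}$ and $\zeta_{40}^{-9}$ match $e(-r^2/40)$ because $81\equiv 1$ and $49\equiv 9\pmod{40}$.

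The action of $S$ is the substantive part of the computation. The plan is to apply $M_{(10)}$ to $H_{(10)}(\tau)$, substitute into the definition of $\widetilde{H}_{(10)}$, and compare coefficient-by-coefficient with $\sqrt{-i\tau}\,\overline{\rho}_L(S)\widetilde{H}_{(10)}(\tau)$, where the latter is the sum $\frac{e(1/8)}{\sqrt{20}}\sum_{r,r'(20)}e(rr'/20)$ paired against the coefficients of the $\mathfrak{e}_r$. The key identity needed is $\sin(\pi k/5)=\frac{1}{2i}(\zeta_{10}^{k}-\zeta_{10}^{-k})$, which rewrites the sines in $M_{(10)}$ as exponentials of the form $e(\pm k/10)$ and lets the two sides be compared monomial-by-monomial in $\zeta_{20}$.

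The main obstacle is simply the organization of this final matching step, since there are six source components and sixteen target basis vectors to track. I would exploit the antisymmetry $\mathfrak{e}_r-\mathfrak{e}_{-r}$ built into $\widetilde{H}_{(10)}$ (which reflects invariance under the Atkin--Lehner-type involution $\sigma_Q$ for $Q=N$) to reduce the check to the eight orbits $\{\pm 1,\pm 9\}$, $\{\pm 3,\pm 7\}$, $\{\pm 2,\pm 8\}$, $\{\pm 4,\pm 6\}$, and within each such orbit verify one representative equation by expanding the sines of $M_{(10)}$ and collecting the coefficient of the relevant $h_{(10),j}$. This mirrors the proof strategy already used in subsections 3.1 and 3.4, and the analogous computation for the fifth-order case in Lemma \ref{5VolleWeilDarst}, so no new ingredient beyond careful bookkeeping is expected.
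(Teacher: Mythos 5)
Your overall strategy is the right one: since the paper states this lemma without proof (as a ``simple consequence'' of Moore's theorem), the only thing to do is verify the $T$- and $S$-transformations directly from the matrices $N_{(10)}$, $M_{(10)}$ and the explicit formulas (\ref{WeilDarstellung}) for the dual Weil representation with $2N=20$, and your $T$-check is correct as written: the symmetric/antisymmetric combinations pick up $\zeta_{10}$, $\zeta_{10}^{-1}$, $-\zeta_{10}=\zeta_{10}^{-4}$, $-\zeta_{10}^{-1}=\zeta_{10}^{4}$, and these match $e(-r^2/40)$ on the respective supports, using $81\equiv1$ and $49\equiv9\pmod{40}$.

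Two points in the $S$-step need attention. First, with the paper's definition of modularity (condition (1) in Section 2.2, automorphy factor $\phi(\tau)^{2k}$), the identity to verify is $\widetilde{H}_{(10)}(-1/\tau)=\sqrt{\tau}\,\overline{\rho}_L(S)\,\widetilde{H}_{(10)}(\tau)$, and $\sqrt{\tau}=e(1/8)\sqrt{-i\tau}$; comparing against $\sqrt{-i\tau}\,\overline{\rho}_L(S)\widetilde{H}_{(10)}(\tau)$, as you write, is off by the phase $e(1/8)$ and the coefficients would \emph{not} match. Indeed, it is exactly the product of this $e(1/8)$ with the $e(1/8)$ in $\overline{\rho}_L(S)$ that yields the factor $i$ needed to turn the exponential sums $\sum_{r'}\pm\, e(rr'/20)$ -- which are purely imaginary, being of the form $2i(\sin(\cdot)-\sin(\cdot))$ by the oddness of the coefficient vector -- into the real entries $\tfrac{2}{\sqrt{5}}\sin(k\pi/5)$ of $M_{(10)}$. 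Second, the coefficient comparison must also cover the residues $r\equiv 0,\pm5,10\pmod{20}$, which do not occur in $\widetilde{H}_{(10)}$: there the coefficient of the $S$-image has to vanish, which follows for $r=0,10$ from the antisymmetry $c_{-r'}=-c_{r'}$ and for $r=\pm5$ from the sign pattern of the $h_{(10),4}$, $h_{(10),5}$ terms ($\sin(\pi r'/2)$ takes the values $1,1,-1,-1$ at $r'=1,9,3,7$, so the contributions cancel). With these two adjustments your computation closes; the reduction to one representative per $\pm$-orbit is fine, though note that $\mathfrak{e}_r-\mathfrak{e}_{-r}$ is anti-invariant, not invariant, under the involution $\sigma_N$.
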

\begin{defin}
We define the function
\begin{align}\label{e10}
e_{(10)}(z):=\bigg(\frac{\eta(z)\eta(2z)}{\eta(5z)\eta(10z)}\bigg)^2-25\bigg(\frac{\eta(5z)\eta(10z)}{\eta(z)\eta(2z)}\bigg)^2.
\end{align}
\end{defin}
This function is a weakly holomorphic modular form of weight $0$, level $10$ whose principal part
starts with $q^{-1}$.
\begin{thm}
Let $e_{(10)}(z)\in M_0^!(10)$ be defined as in (\ref{e10}).
\begin{enumerate}
\item[(1)] For $n\geq1$ the coefficients $a_X(n)$ of $X(q)$ are given by
\begin{align*}
a_X(n)&=\frac{i}{2\sqrt{40n-1}}\big(\textup{tr}_{e_{(10)}}^+(1-40n,1)-\textup{tr}_{e_{(10)}}^-(1-40n,1)
\big).
\end{align*}
\item[(2)] For $n\geq1$ the coefficients $a_\chi(n)$ of $\chi(q)$ are given by
\begin{align*}
a_\chi(n)&=\frac{i}{2\sqrt{40n-9}}\big(\textup{tr}_{e_{(10)}}^+(9-40n,3)-\textup{tr}_{e_{(10)}}^-(9-40n,3)
\big).
\end{align*}
\item[(3)] For $n\geq1$ the coefficients $a_\phi(n)$ of $\phi(q)$ are given by
\begin{align*}
a_\phi(n)&=\begin{cases}\frac{-i}{4\sqrt{40\frac{n+2}{2}-36}}\big(\textup{tr}_{e_{(10)}}^+(36-40\frac{n+2}{2},6)-\textup{tr}_{e_{(10)}}^-(36-40\frac{n+2}{2},6)
\big),&\text{ if }n\text{
is even,}\\\frac{-i}{4\sqrt{40\frac{n+1}{2}-16}}\big(\textup{tr}_{e_{(10)}}^+(16-40\frac{n+1}{2},4)-\textup{tr}_{e_{(10)}}^-(16-40\frac{n+1}{2},4)
\big),&\text{ if }n\text{ is odd.} \end{cases}
\end{align*}
\item[(4)] For $n\geq1$ the coefficients $a_\psi(n)$ of $\psi(q)$ are given by
\begin{align*}
a_\psi(n)&=\begin{cases}\frac{-i}{4\sqrt{40\frac{n}{2}-4}}\big(\textup{tr}_{e_{(10)}}^+(4-40\frac{n}{2},2)-\textup{tr}_{e_{(10)}}^-(4-40\frac{n}{2},2)
\big),&\text{ if }n\text{
is even,}\\\frac{-i}{4\sqrt{40\frac{n+3}{2}-64}}\big(\textup{tr}_{e_{(10)}}^+(64-40\frac{n+3}{2},8)-\textup{tr}_{e_{(10)}}^-(64-40\frac{n+3}{2},8)
\big),&\text{ if }n\text{ is odd.} \end{cases}
\end{align*}
\end{enumerate}
\end{thm}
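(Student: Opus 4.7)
The strategy is to parallel the proof of Theorem \ref{Formeln61}: identify $\widetilde{H}_{(10)}$ with (a scalar multiple of) the Millson theta lift of $e_{(10)}$ and then read off the coefficient formulas via Theorem \ref{Alfes}. All the necessary ingredients are already assembled. The form $\widetilde{H}_{(10)}$ lies in $H^+_{1/2,\overline{\rho}_L}$ for the lattice of level $N=10$, $e_{(10)}\in M_0^!(10)$ has principal part $q^{-1}+O(q)$ at $\infty$, and the Millson theta lift $\mathcal{I}^M\colon M_0^!(10)\to H_{1/2,\overline{\rho}_L}$ is available.

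First I would compute the principal part of $\widetilde{H}_{(10)}$. Using the standard $q$-expansions of $\phi,\psi,X,\chi$ together with the $q$-shifts in $F_{(10)}$, one sees that the $\phi$ and $\psi$ contributions produce only non-negative $q$-powers, so the entire principal part arises from $h_{(10),4}=q^{-1/40}X(q)$ and $h_{(10),5}=q^{-9/40}\chi(q)$. These yield contributions of the form $q^{-1/40}$ at $\mathfrak{e}_r$ for $r\in\{\pm 1,\pm 9\}\pmod{20}$ and of the form $q^{-9/40}$ at $\mathfrak{e}_r$ for $r\in\{\pm 3,\pm 7\}\pmod{20}$, with signs dictated by the antisymmetric combinations $\mathfrak{e}_r-\mathfrak{e}_{-r}$ appearing in the definition of $\widetilde{H}_{(10)}$.

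Next I would determine the Atkin-Lehner eigenvalues of $e_{(10)}$ on $\Gamma_0(10)$. Because $e_{(10)}$ is a difference of two eta quotients that are interchanged (up to a scalar) by the Fricke involution $W_{10}$, it is automatically an eigenform of $W_{10}$, and a short direct check gives the eigenvalues of $W_2$ and $W_5$. These signs determine the Fourier expansion of $e_{(10)}$ at all four cusps of $\Gamma_0(10)$ and therefore the principal part of $\mathcal{I}^M(e_{(10)},\tau)$. Matching with the principal part of $\widetilde{H}_{(10)}$ should yield the identity $\widetilde{H}_{(10)}=\tfrac{1}{2}\mathcal{I}^M(e_{(10)},\tau)$; the scalar $1/2$ is exactly what produces the $\frac{1}{2\sqrt{\,\cdot\,}}$ prefactor of the $a_X$ and $a_\chi$ formulas, and (together with the factor of $2$ arising in $h_{(10),0}\pm h_{(10),2}$ and $h_{(10),1}\pm h_{(10),3}$) the $\frac{1}{4\sqrt{\,\cdot\,}}$ prefactor of the $a_\phi$ and $a_\psi$ formulas.

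Once this identification is in hand, Lemma \ref{XiSpitze} combined with the vanishing of $S_{1/2,\overline{\rho}_L}$ forces $\widetilde{H}_{(10)}-\tfrac{1}{2}\mathcal{I}^M(e_{(10)},\tau)=0$, so Theorem \ref{Alfes} gives an explicit formula for every Fourier coefficient of index $(-D,r)$ of $\widetilde{H}_{(10)}$ in terms of $\textup{tr}^{\pm}_{e_{(10)}}(D,r)$. Comparing these with the Fourier expansions of $h_{(10),4}$, $h_{(10),5}$, $h_{(10),0}\pm h_{(10),2}$ and $h_{(10),1}\pm h_{(10),3}$ yields the four stated formulas; the split into the even and odd cases for $\phi$ and $\psi$ corresponds to the two sign choices in these antisymmetric combinations, which live at different discriminant group elements ($r=4$ versus $r=6$ for $\phi$, and $r=2$ versus $r=8$ for $\psi$). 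The main obstacle is the careful bookkeeping of Atkin-Lehner signs, cusp normalizations and numerical constants throughout the principal-part comparison, where an off-by-sign or off-by-two mistake would propagate into every formula; a numerical verification in Sage, as the authors report for the analogous orders, would be the natural safeguard.
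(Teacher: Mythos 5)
Your proposal is correct and follows exactly the route the paper intends for this theorem (whose proof is omitted and modeled on that of Theorem \ref{Formeln61}): compute the principal part of $\widetilde{H}_{(10)}$, use the Atkin--Lehner eigenvalues of $e_{(10)}$ to control its expansions at the cusps of $\Gamma_0(10)$, conclude $2\,\widetilde{H}_{(10)}=\mathcal{I}^M(e_{(10)},\tau)$ from Lemma \ref{XiSpitze} together with $S_{1/2,\overline{\rho}_L}=\{0\}$, and read off the formulas from Theorem \ref{Alfes}, with your scalar $\tfrac12$ and the factors from $h_{(10),0}\pm h_{(10),2}$, $h_{(10),1}\pm h_{(10),3}$ giving precisely the stated prefactors. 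One minor slip: $\chi(q)=q-q^2+\cdots$ has no constant term, so $h_{(10),5}=q^{-9/40}\chi(q)$ contributes nothing to the principal part; thus the principal part of $\widetilde{H}_{(10)}$ is only $q^{-1/40}\,(\mathfrak{e}_1-\mathfrak{e}_{-1}-\mathfrak{e}_9+\mathfrak{e}_{-9})$, which is exactly what is needed, since the Millson lift of the simple-pole form $e_{(10)}$ has no $q^{-9/40}$ terms in its principal part.
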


\newpage

\nocite{*}

\end{document}